\newtheorem{theorem}{Theorem}[section]
\newtheorem{proposition}[theorem]{Proposition}
\theoremstyle{assumption}
\theoremstyle{definition}
\theoremstyle{remark}
\newtheorem{remark}[theorem]{Remark}
\numberwithin{equation}{section}
\newcommand{\eps}{\varepsilon}
\newcommand{\norm}[1]{\Vert#1\Vert}
\newcommand{\abs}[1]{\left\vert#1\right\vert}
\newcommand{\inner}[1]{\left(#1\right)}
\newcommand{\comi}[1]{\left<#1\right>}
\newcommand{\normm}[1]{{ \vert\kern-0.25ex \vert\kern-0.25ex \vert #1
		\vert\kern-0.25ex \vert\kern-0.25ex \vert}}
 \newbox \abstractbox
\renewenvironment{abstract}{\global\setbox\abstractbox=\vbox\bgroup
 \hsize=\textwidth
  \vskip 1.2cm
  \noindent\unskip \textbf{Abstract.}
 }
 {
 \egroup}
 \def\@startsection#1#2#3#4#5#6{%
 \if@noskipsec \leavevmode \fi
 \par \@tempskipa #4\relax
 \@afterindentfalse
 \ifdim \@tempskipa <\z@ \@tempskipa -\@tempskipa \@afterindentfalse\fi
 \if@nobreak \everypar{}\else
     \addpenalty\@secpenalty\addvspace\@tempskipa\fi
 \@ifstar{\@dblarg{\@sect{#1}{\@m}{#3}{#4}{#5}{#6}}}%
         {\@dblarg{\@sect{#1}{#2}{#3}{#4}{#5}{#6}}}%
}
\def\@settitle{%
  \bgroup
  \centering
  \vglue1cm
  \fontsize{12}{15}\fontseries{b}\selectfont
  \@title
  \vskip 20pt plus 6pt minus 8pt
  \egroup
}
\def\@setauthors{%
  \begingroup
  \trivlist
  \centering \bfseries
 \normalsize\@topsep30\p@\relax
  \advance\@topsep by -\baselineskip
  \item\relax
  \andify\authors
 {\rmfamily\authors}%
  \endtrivlist
  \endgroup
}
\def\@setaddresses{\par
  \nobreak \begingroup
\normalsize
  \def\author##1{\nobreak\addvspace\bigskipamount}%
  \def\\{\unskip, \ignorespaces}%
  \interlinepenalty\@M
  \def\address##1##2{\begingroup
    \par\addvspace\bigskipamount\noindent
    \@ifnotempty{##1}{(\ignorespaces##1\unskip) }%
    {\ignorespaces##2}\par\endgroup}%
  \def\curraddr##1##2{\begingroup
    \@ifnotempty{##2}{\nobreak\indent{\itshape Current address}%
      \@ifnotempty{##1}{, \ignorespaces##1\unskip}\/:\space
      ##2\par}\endgroup}%
  \def\email##1##2{\begingroup
    \@ifnotempty{##2}{\nobreak\noindent{\itshape E-mail address}%
      \@ifnotempty{##1}{, \ignorespaces##1\unskip}\/:
       ##2\par}\endgroup}%
   \def\urladdr##1##2{\begingroup
    \@ifnotempty{##2}{\nobreak\indent{\itshape URL}%
      \@ifnotempty{##1}{, \ignorespaces##1\unskip}\/:\space
      \ttfamily##2\par}\endgroup}%
  \addresses
  \endgroup
}
 \renewcommand\section{\@startsection{section}{1}{\z@}%
{27pt plus 6pt minus 8pt}{14pt plus 6pt minus 8pt}
{\center\normalfont\large\bfseries}}
\begin{document}

\title[Sharp regularization effect for the  Boltzmann equation]{Sharp regularization effect for the non-cutoff  Boltzmann equation with hard potentials}

\author[J.-L. Chen, W.-X. Li and C.-J. Xu]{ Jun-Ling Chen, Wei-Xi Li \and Chao-Jiang Xu}

\date{}

\address[J.-L. Chen]{
	School of Mathematics and Statistics,
	  Wuhan University,  Wuhan 430072, China
}

\email{
	jun-ling.chen@whu.edu.cn}

\address[W.-X. Li]{School of Mathematics and Statistics,   Wuhan University,  Wuhan 430072, China
	\& Hubei Key Laboratory of Computational Science, Wuhan University, Wuhan 430072,  China
}

\email{
	wei-xi.li@whu.edu.cn}

\address[C.-J. Xu]{School of Mathematics and Key Laboratory of Mathematical MIIT,\\
 Nanjing University of Aeronautics and Astronautics, Nanjing 210016, China
}

\email{xuchaojiang@nuaa.edu.cn}

\keywords{Non-cutoff Boltzmann equation,  Analytic regularization effect, Gevrey class, Spatially inhomogeneous} \subjclass[2020]{35Q20, 35A20, 35B65, 35H20, 35Q82}

\begin{abstract} For the Maxwellian molecules or hard potentials case, we verify the smoothing effect for the spatially inhomogeneous Boltzmann equation without angular cutoff. Given initial data with low regularity, we prove solutions  at any positive time are analytic for strong angular singularity,  and in Gevrey class with optimal index for mild angular singularity.  To overcome  the degeneracy in the spatial variable,  a family of well-chosen  vector fields with time-dependent  coefficients  will play a crucial role, and
the sharp regularization  effect of weak solutions  relies on  a quantitative estimate on directional derivatives in  these vector fields.
\end{abstract}

 \maketitle


\section{Introduction and main result}

Due to the diffusion property, the regularization effect is well explored for parabolic-type equations. As a typical example,   solutions to the Cauchy problem of heat equation will become analytic at positive times for given initial data with low regularity. This kind of parabolic regularization effect has been observed in several classical equations which describe the motion of dilute gas and fluid dynamics in different physical scales.  For instance, in the macroscopic scales, the motion of fluid may be described by the classical Navier-Stokes equations which indeed enjoy the analytic smoothing effect (cf. e.g. Foias-Temam \cite{MR1026858}). Meanwhile, in the mesoscopic kinetic theory, the Boltzmann equation plays a fundamental role, and the regularization properties of  weak solutions were observed in  P.-L. Lions's work \cite{MR1278244} and further verified by L.Desvillettes \cite{MR1324404}. Since then there have been extensive works on the  $C^\infty$-smoothing effect for the non-cutoff Boltzmann equation and related models, most of which are concerned with the spatially homogeneous case;  the breakthrough for the inhomogeneous counterpart was achieved in the very recent work of  Imbert-Silvestre \cite{MR4433077}.
 In this work, we aim to explore the analytic and sharp Gevrey class regularization effect for the spatially inhomogeneous  Boltzmann equation without angular cutoff.  Different from the heat or the    Navier-Stokes equations, the spatially inhomogeneous  Boltzmann equation is a degenerate parabolic equation. Although sometimes we may expect $C^\infty$-smoothness for general degenerate equations,  it is highly non-trivial to get the analytic regularity. In fact for the inhomogeneous Boltzmann equations,  so far very few analytic solutions  are available.

To understand the transport properties of a dilute gas described by the  Boltzmann equation,  explicit solutions would be useful to capture the non-equilibrium phenomena.  Due to the high non-linearity of the Boltzmann collision operator, it is usually not easy to find an explicit solution and in this case, it would be more convenient to solve the Boltzmann equation via the analytic approximation with the help of numerical methods.
In this paper, we will verify theoretically the analyticity at positive time of mild solutions to the spatially inhomogeneous  Boltzmann equation  with strong angular singularity. On the other hand, for mild angular singularity, the sharp regularization that we may expect will be in Gevrey class rather than in analytic space.
    To investigate the sharp regularity, the main difficulty arises from the degeneracy in spatial direction  coupled with the
  highly non-linear feature in the Boltzmann collision operator.  For the spatial homogeneous case, the regularity  issue reduces to a parabolic problem, and motived by the heat equation,  analytic solutions to the Boltzmann equation and related models  have been proven   for rather weak initial data; cf. \cite{MR3177625,MR2557895,MR3665667}  for instance and also \cite{MR3572500, MR3325244, MR3310275, MR2959943, MR2038147, MR3485915,MR1737547, MR2514370} for the regularity in other different kinds of function spaces.  However,  analytic solutions are much less known for the spatially inhomogeneous counterpart, and the  well-posedness  in the analytic space was obtained by S.Ukai  \cite{MR839310} where the author required the analytic regularity  for  initial data so that   Cauchy-Kovalevskaya theorem may apply,   and to the best of our knowledge, no analytic solution is known for non-analytic initial data.  Motived by  the diffusive models such as the hypoelliptic  Fokker-Planck and Landau equations, it is natural to expect a smoothing effect for the spatially inhomogeneous Boltzmann equation in the analytic space or sharp Gevrey class rather than in $C^\infty$ setting.

  The   spatially inhomogeneous  Boltzmann equation in torus reads as
\begin{equation}\label{1}
\begin{array}{ll}
\partial_{t}F+v\cdot\partial_{x}F = Q(F,F), \quad F|_{t=0}=F_0,
\end{array}
\end{equation}
where $F(t, x, v)$ stands for  probability density function  at position $x\in \mathbb{T}^3$, time $t\geq 0$ with  velocity $v\in \mathbb{R}^3$. If $F=F(t,v)$ is independent of $x$, then equation \eqref{1} reduces to the spatial homogeneous Boltzmann equation. 
The Boltzmann collision operator on the right-hand side of \eqref{1} is a bilinear operator defined  by
\begin{equation}\label{collis}
Q(G,F)(t, x, v)=\int_{\mathbb{R}^3}\int_{\mathbb S^2}B(v-v_*,\sigma)(G^{\prime}_{*}F^{\prime}-G_*F)dv_{*}d\sigma,
\end{equation}
where  and throughout the paper we use the
standard shorthand  $F^{\prime}=F(t,x,v^{\prime}), ~F=F(t,x,v), ~G^{\prime}_{*}=G(t,x,v^{\prime}_{*})$ and $G_*=G(t,x,v_*)$, and the pairs  $(v, v_*)$ and   $(v^{\prime}, v^{\prime}_*) $   are  the velocities of  particles after and before collisions, with the following momentum and  energy conservation rules fulfilled:
\begin{equation*}
      v^{\prime}+v^{\prime}_{*}=v+v_{*} , ~~|v^{\prime}|^{2}+|v^{\prime}_{*}|^{2}=|v|^{2}+|v_{*}|^{2}.
\end{equation*}
From the above relations   we have  the so-called $\sigma$-representation,  with $\sigma\in\mathbb S^{2},$
 \begin{equation*}
 \left\{
 \begin{aligned}
   &v'  = \frac{v+v_*}{2} + \frac{ |v-v_*|}{2} \sigma, \\
  &v'_* = \frac{v+v_*}{2} - \frac{ |v-v_*|}{2} \sigma.
 \end{aligned}
 \right.
\end{equation*}
The cross-section $B(v-v_*,\sigma)$ in \eqref{collis}  depends on the relative velocity $\abs{v-v_*}$ and the  deviation angle $\theta$  with
 \begin{equation*}
 \cos \theta =   \frac{  v-v_*} {|v-v_*|}\cdot \sigma.
\end{equation*}
 Without loss of generality, we may assume
that $B(v-v_*,\sigma)$ is supported   on $0\leq \theta\leq \pi/2$ such that $\cos\theta\geq 0$ and also assume that it takes the following specific form:
  \begin{equation}\label{kern}
 { B}(v-v_*, \sigma) =  |v-v_*|^\gamma  b (\cos \theta),
 \end{equation}
 where    $|v-v_*|^\gamma$ is called the kinetic part with   $-3<\gamma\leq 1$, and $b (\cos \theta)$ is called the angular part satisfying that
   \begin{equation}\label{angu}
 0 \leq  \sin \theta   b(\cos \theta)  \approx   \theta^{-1-2s}  \end{equation}
for   $0<s<1$,  where here and throughout the paper  $p\approx q$ means  $C^{-1}q\leq p\leq Cq$ for
some  generic constant $C\geq 1$.  So the angular part $b(\cos\theta)$ has singularity near $0$ in the sense that
 \begin{eqnarray*}
	\int_0^{\pi/2}\sin \theta   b(\cos \theta)\ d\theta=+\infty.
\end{eqnarray*}
In the following discussion, by strong angular singularity we mean that $1/2\leq s<1$,  and mild angular singularity means that $0<s<1/2.$
 Recall $\gamma=0$ is the Maxwellian molecules case and meanwhile the cases of  $-3<\gamma< 0$ and   $0< \gamma $ correspond respectively to the soft   potential and the hard potential.
In this text, we will restrict our attention to  the cases of  Maxwellian molecules and hard potential, i.e., $ \gamma\geq 0.$

 We are concerned with the solution to the Boltzmann equation \eqref{1} around  the normalized global Maxwellian
      $\mu=\mu(v)=(2\pi)^{-3/2}e^{-|v|^{2}/2}$.
Thus, let $F(t,x,v)=\mu+\sqrt{\mu}f(t,x,v)$ and similarly for the  initial datum $F_0$, then the reformulated unknown $f=f(t,x,v)$ satisfies that
\begin{eqnarray}\label{eqforper}
 \partial_{t}f+v\cdot\nabla_{x}f+\mathcal{L} f
      =\Gamma(f,f),\quad f|_{t=0}=f_0,
\end{eqnarray}
with the   linearized collision operator $\mathcal L$ and the non-linear collision operator $\Gamma(\cdot,\cdot)$ respectively given  by
\begin{equation}\label{linearbol}
 \mathcal L f=-\mu^{-1/2}Q(\mu,\sqrt{\mu} f)-\mu^{-1/2}Q(\sqrt{\mu}f,\mu),
 \end{equation}
 and
 \begin{equation}
 \label{def.nlt}
 	\Gamma(g,h)=\mu^{-1/2}Q(\sqrt{\mu}g,\sqrt{\mu}h).
 \end{equation}
 Initiated by \cite{MR1278244, MR1324404}, so far it is  well understood that the angular singularity will lead to the fractional diffusion in velocity so that it is a natural conjecture that the Boltzmann collision operator without cutoff should behave essentially as the fractional Laplacian:
\begin{equation}\label{beh}
	-Q(g,f)\sim C_g (-\Delta_v)^s+l.o.t.
\end{equation}
where l.o.t.  refers to lower-order terms that are easier
to control.  Note \eqref{beh} is   verified rigorously true by Alexandre-Desvillettes-Villani-Wennberg \cite{MR1765272}  where the velocity should vary in a bounded region.    For the global counterpart of \eqref{beh},
 an accurate characterization by fractional Laplacian $(-\Delta_v)^s$ and fractional Laplacian on sphere $(v\wedge \partial_v)^{2s}$ is given by  \cite{MR3950012} with the help of pseudo-differential calculus.      Moreover,  fractional diffusion in the spatial variable $x$ may be also archived due to the non-trivial interaction between the diffusion in velocity and the transport part. Thus even though the spatially inhomogeneous Boltzmann equation is degenerate in the spatial direction, it admits an intrinsic hypoelliptic structure just like the diffusive variants such as the Fokker-Planck equation or the Landau equation. Inspired by the analytic regularization effect observed by  \cite{MR2523694,MR4612704} for these specific diffusive models, it is natural to ask the same phenomena for the Boltzmann equation with strong angular singularity,   and in  this work, we will confirm it by virtue of a family of well-chosen vector fields. Moreover, for the remaining case of mild angular singularity, we verify the Gevrey smoothing effect with sharp index.

Before stating the main result, we first recall the extensive studies on the regularization properties of weak solutions to the spatially inhomogeneous Boltzmann equation.   The mathematical verification of the regularization phenomena may go back to L.Desvillettes \cite{MR1324404} for a one-dimensional model of the Boltzmann equation. Later on, the intrinsic diffusion structure in velocity was proven by Alexandre-Desvillettes-Villani-Wennberg  \cite{MR1765272}.  Since then substantial developments have been achieved, and  here we only mention the works  \cite{MR2506070, MR2679369, MR2847536, MR2807092,MR2784329, MR4107942}  for the $C^\infty$ or Sobolev regularization effect.  The smoothing effect in  more regular Gevrey class with Gevrey index $1+\frac{1}{2s}$ was proven by \cite{MR3348825, MR4147430, MR4356815, MR4375857}, based on the hypoelliptic structure explored in \cite{MR1949176, MR3950012, MR3456819, MR3102561, MR2763329, MR2467026, MR3193940}.  Another effective tool refers to  De Giorgi-Nash-Moser theory,  with the help of a strong averaging lemma that plays a crucial role in capturing the regularizing effect;   this approach applies recently to study the conditional regularity for the spatially inhomogeneous Boltzmann equation with general initial data (cf. \cite{MR4033752,MR4433077,MR4229202,MR4049224,MR3551261,MR4431674} for instance) and the well-posedness  for the close-to-equilibrium problem with polynomial tails (cf.  \cite{MR4201411,MR4526062,MR4431674}).

 \subsection{Notations and function spaces}\label{notafun}

Given two operators $P_1$ and $P_2$ we denote by $[P_1, P_2]$ the commutator between $P_1$ and $P_2$, that is,
$
	[P_1,  P_2]=P_1P_2-P_2P_1.
$

 We denote  by $\hat f$ or $\mathcal F_x f$ the partial Fourier transform of $f(t,x,v)$ with respect to the spatial variable $x \in \mathbb{T}^3$, that is,
$$\hat{f}(t, m, v)=\mathcal F_x f(t, m, v) =\int_{\mathbb{T}^3}e^{-im\cdot x }f(t, x, v)dx,\quad m \in \mathbb{Z}^3,$$
where here and below we use $m \in \mathbb{Z}^3$ to stand for the Fourier dual variable of $x \in \mathbb{T}^3$. Similarly,  $\mathcal F_{x,v} f$ represents  the full Fourier transform of $f(t,x,v)$ with respect to  $(x,v)$ and we will denote by $(m,\eta)$ the Fourier dual variable of $(x,v).$
For the sake of convenience, we will denote by  $\hat{\Gamma}(\hat{f},\hat{g})$ the partial Fourier transform of  $\Gamma(f,g)$ defined in \eqref{def.nlt},    meaning that
\begin{equation*}
\begin{aligned}
&\hat{\Gamma}(\hat{f},\hat{g})(t,m,v):=\mathcal F_x \inner{\Gamma(f,g)}(t,m,v)\\
&
=\int_{\mathbb{R}^3}\int_{\mathbb{S}^{2}} B(v-v_*,\sigma)  \mu^{1\over 2}(v_*) \left([\hat{f}(v_*')*\hat{g}(v')](m)-[\hat{f}(v_*)*\hat{g}(v)](m)\right) d \sigma d v_*,
\end{aligned}
\end{equation*}
where the convolutions are taken with respect to the Fourier variable $m\in\mathbb Z^3$:
\begin{equation}\label{def:conv}
{[\hat{f}(u)*\hat{g}(v)]}(m):  =  \int_{\mathbb Z^3_\ell}  \hat{f}(t,m-\ell,u)\hat{g}(t,\ell,v)\,d\Sigma(\ell),
\end{equation}
for any velocities $u,v\in {\mathbb R}^3$. Here and below
  $d\Sigma(m)$ stands for the discrete measure on $\mathbb{Z}^3$, i.e.,
  $$\int_{\mathbb{Z}^3}g(m)d\Sigma(m):=\sum_{m \in \mathbb{Z}^3}g(m)$$ for any summable function $g=g(m)$ on $\mathbb{Z}^3$.  When applying Leibniz's formula, it will be convenient to work with the  trilinear operator $\mathcal T$  defined   by
\begin{equation}\label{matht}
\mathcal T (g,h, \omega)=   \iint B(v-v_*,\sigma) \omega_* \inner{g_*'h'-g_*h} dv_*d\sigma,
\end{equation}
where $B$ is given in \eqref{kern}, and $\omega$ is a function of $v$ variable only.
The bilinear operator $\Gamma$ in \eqref{def.nlt} and the above $\mathcal T$ are linked by
\begin{equation}
\label{trb}
	\Gamma(g,h)=\mathcal T(g,h,\mu^{1/2}).
\end{equation}
 Similarly as above  we denote by $\hat{\mathcal{T}}(\hat{g}, \hat h,  \omega)$ the partial Fourier transform of $ \mathcal T ( g , h ,\omega)$ with respect to $x$, that is,  for any functions $\omega=\omega(v)$ of $v$ variable only,
\begin{multline}\label{Foutri}
\hat{\mathcal{T}}(\hat{g}, \hat h,  \omega)(m,v) =\mathcal F_x\Big( \mathcal T ( g , h ,\omega)\Big)(m,v)\\
=\iint B(v-v_*,\sigma)\omega(v_*)\Big([\hat{g}(v^{\prime}_{*})*
\hat{h}(v^{\prime})](m)-[\hat{g}(v_{*})*
\hat{h}(v)](m)\Big)dv_{*}d\sigma
\end{multline}
where the conclusions are taken with respect to the Fourier variable $m\in\mathbb Z^3$, seeing  definition \eqref{def:conv}.

   Throughout the paper, we will use without confusion  $L^2_v$ to stand for the classical Lebesgue space $L^2$ consisting of functions of specified variable $v$.  
  Similarly for   $L^2_{x,v} $. Denote by $H^p_v$   the classical Sobolev space $H^p$ in $v$ variable, and similarly for $H^p_{x,v}$.
We recall the mixed Lebesgue spaces $L^p_mL^q_TL^r_v$ introduced in \cite{MR4230064}, which is defined by
\begin{align*}
	L^p_mL^q_TL^r_v=\big\{ g=g(t,x,v); ~~ \  \norm{g}_{L^p_mL^q_TL^r_v}<+\infty \big\},
\end{align*}
where
\begin{eqnarray*}
\norm{g}_{L^p_mL^q_TL^r_v}:=\left\{
\begin{aligned}
&  \left(\int_{\mathbb{Z}^3}\left(\int_{0}^{T}\norm{ \hat{g}(t,m,\cdot)}_{L^r_v}^{q}dt\right)^{\frac{p}{q}}d\Sigma(m)\right)^{\frac{1}{p}},\quad q< \infty, \\
& \left(\int_{\mathbb{Z}^3}\left(\sup\limits_{0 < t < T}\norm{\hat{g}(t,m,\cdot)}_{L^r_v}\right)^{p}d\Sigma(m)\right)^{\frac{1}{p}},\quad q=\infty,
\end{aligned}
\right.
\end{eqnarray*}
for $1 \leq p, r < \infty$ and $1 \leq q \leq \infty$.   In particular,  
$$L^p_mL^r_v=\Big\{g=g(x,v);~~\  \norm{g}_{L^p_mL^r_v}:= \Big (\int_{\mathbb{Z}^3}\norm{ \hat{g}(m,\cdot)}_{L^r_v}^{p}d\Sigma(m)\Big )^{\frac{1}{p}}<+\infty\Big\},$$
  and 
   \begin{eqnarray*}
	L_m^1=\Big\{g=g(x);~~ \  \norm{g}_{L^1_m }:=\int_{\mathbb Z^3} |\hat g(m)| d\Sigma(m)<+\infty\Big\}.
\end{eqnarray*}
   Finally we recall the triple-norm   $\normm{\cdot}$   introduced by Alexandre-Morimoto-Ukai-Xu-Yang \cite{MR2863853},   defined as
   \begin{equation}\label{trinorm}
 \begin{aligned}
 	\normm f^2:&= \int_{\mathbb R^3}\int_{\mathbb R^3}\int_{\mathbb{S}^2} B(v-v_*,\sigma) \mu_*
  \inner{f-f'}^2\,d\sigma d vd v_*  \\
  &\quad+\int_{\mathbb R^3}\int_{\mathbb R^3}\int_{\mathbb{S}^2}  B(v-v_*,\sigma) f_*^2 \inner{\sqrt{\mu'}-
  	\sqrt{\mu}}^2\,d\sigma d vd v_*.
 \end{aligned}
   \end{equation}
 Note the triple norm is indeed equivalent to the anisotropic norm $|\cdot|_{N^{\gamma,s}}$  introduced in Gressman-Strain \cite{MR2784329}. Both the two norms can be characterized by  an  explicit norm $\|(a^{1/2})^w f\|_{L^2_v}$ with $(a^{1/2})^w$ standing for the Weyl quantization of symbol $a^{1/2}$ (cf. \cite{MR3950012} for detail).   In this text, we will use the above triple norm to avoid the pseudo-differential calculus.

\subsection{Statement of the main result}
Let $L_m^1L_v^2$ and $L_m^1L_T^\infty L_v^2$ be the spaces defined in the previous part. We first recall     the existence and uniqueness of solutions to \eqref{eqforper}  established by Duan-Liu-Sakamoto-Strain \cite{MR4230064}  in the setting of $L_m^1L_v^2$.  Assume that the cross-section satisfies \eqref{kern} and \eqref{angu} with $ 0\leq \gamma  $ and
 $0<  s <1$.    It is proven by  \cite{MR4230064}  that for given initial datum $f_0\in L_m^1L_v^2$ satisfying that
\begin{eqnarray*}
	\norm{f_0}_{L_m^1L_v^2}\leq \epsilon
\end{eqnarray*}
for some sufficiently small constant  $\epsilon>0$,  the non-linear Boltzmann equation \eqref{eqforper} admits a unique global solution in $L_m^1L_T^\infty L_v^2$ for any $T>0.$ Moreover,  the higher-order regularity of the mild solution $f  $ is obtained   by \cite{MR4356815} which says that     $f$ is in Gevrey class $G^{1+\frac{1}{2s}}(\mathbb{T}_x^3 \times \mathbb{R}_v^3)$ for $t>0$. Recall  $f=f(x, v) \in G^{r}(\mathbb{T}_x^3 \times \mathbb{R}_v^3)$  with index $r>0$
if  $f \in C^\infty(\mathbb{T}_x^3 \times \mathbb{R}_v^3)$ and there exists a  constant $C>0$ such that
\begin{eqnarray*}
	\forall\ \alpha, \beta \in \mathbb{Z}_+^3,\quad   \norm{\partial_x^\alpha\partial_v^\beta f}_{L_{x, v}^2} \leq C^{\abs\alpha+\abs\beta +1}\big[ { (\abs\alpha+\abs\beta)}!\big]^r,
\end{eqnarray*}
Here $r$ is called the Gevrey index.  In particular $G^1(\mathbb{T}_x^3 \times \mathbb{R}_v^3)$ is just the space of analytic functions, and  $G^r (\mathbb{T}_x^3 \times \mathbb{R}_v^3)$ with $0<r<1$ is   the space of ultra-analytic functions. We have an equivalent expression of the Gevrey class $G^r(\mathbb Z_x^3\times\mathbb R_v^3) $ by virtue of the  Fourier multiplier $e^{c(-\Delta_x-\Delta_v)^{\frac{1}{2r}}}$ with $  c>0 $ a constant,   that is,  we say $f\in G^r(\mathbb Z_x^3\times\mathbb R_v^3) $    if  \begin{equation}\label{eqgev}
	e^{c(-\Delta_x-\Delta_v)^{\frac{1}{2r}}} f\in L^2_{x,v}. 
\end{equation}
Here   $ e^{c(-\Delta_x-\Delta_v)^{\frac{1}{2r}}} f$ is defined by 
\begin{align*}
	\mathcal F_{x,v} \Big(e^{c(-\Delta_x-\Delta_v)^{\frac{1}{2r}}} f\Big)(m,\eta)=e^{c(|m|^2+|\eta|^2)^{\frac{1}{2r}}}  \mathcal F_{x,v} f(m,\eta),
\end{align*}
recalling  $\mathcal F_{x,v}$ represents  the full Fourier transform with respect to $(x,v)$ and $(m,\eta)$ are the Fourier dual variable of $(x,v)$.  

 This work aims to prove the sharp Gevrey class smoothing effect, improving the previous Gevrey regularity  index $1+\frac{1}{2s}$ in \cite{MR4356815}.  The main result can be stated as follows.

\begin{theorem}\label{mainresult} Let $G^{r}(\mathbb{T}_x^3 \times \mathbb{R}_v^3)$ be the Gevrey space defined above.
	Assume that the cross-section satisfies \eqref{kern} and \eqref{angu} with $   \gamma \geq 0 $ and
 $0< s <1$.  There exists a sufficiently small constant $\epsilon>0$ such that if
 \begin{equation}
 	\label{smint}
 	\norm{f_0}_{L_m^1L_v^2} \leq \epsilon,
 	 \end{equation}
then the  Boltzmann equation  \eqref{eqforper}  admits a global-in-time solution $f$  satisfying  that $f \in G^{\tau}(\mathbb{T}_x^3 \times \mathbb{R}_v^3)$ for all $t >0$, where
\begin{equation}\label{taudef}
	\tau=\max\Big \{\frac{1}{2s}, 1\Big\}.
\end{equation}
Moreover, for any $T\geq 1$ and any number $\lambda$ satisfying that $\lambda>1+\frac{1}{2s}$, there exists a constant $C>0$ depending on $T$ and $\lambda$,  such that
	\begin{equation}\label{alpha1}
	\forall \ \alpha , \beta \in \mathbb{Z}_{+}^3, \quad 	\sup_{0<t\leq T}t^{(\lambda+1)\abs\alpha+ \lambda \abs\beta} \norm{\partial_x^{\alpha}\partial_{v}^{\beta}f(t)}_{L^2_{x,v}} \leq    C^{|\alpha|+|\beta|+1} [(|\alpha|+|\beta|)!]^\tau.
	\end{equation}
\end{theorem}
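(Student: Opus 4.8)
The plan is to deduce the whole statement from the quantitative bound \eqref{alpha1}: once it holds, fixing $t>0$ and applying it with $T=\max\set{t,1}$ gives $\norm{\partial_x^\alpha\partial_v^\beta f(t)}_{L^2_{x,v}}\leq C_t^{\abs\alpha+\abs\beta+1}\big[(\abs\alpha+\abs\beta)!\big]^\tau$ for some $C_t$ depending on $t$, which is exactly the definition of $f(t)\in G^\tau(\mathbb T^3_x\times\mathbb R^3_v)$, equivalently the Fourier statement \eqref{eqgev}. Hence it suffices to prove \eqref{alpha1}, and we shall prove the stronger version with $\norm\cdot_{L^2_{x,v}}$ replaced by the Wiener-type norm $\norm\cdot_{L^1_mL^2_v}$ (which dominates $\norm\cdot_{L^2_{x,v}}$ on the torus), since it is in $L^1_mL^\infty_TL^2_v$ that the Duan--Liu--Sakamoto--Strain solution and the baseline bound $\norm f_{L^1_mL^\infty_TL^2_v}\lesssim\eps$ live. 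Two structural facts will be used throughout: the coercivity of the linearized operator, $\mathrm{Re}\,\comi{\mathcal Lh,h}_{L^2_v}\geq c_0\normm h^2-C_0\norm h_{L^2_v}^2$ with $\normm\cdot$ the triple norm \eqref{trinorm}, which by \cite{MR2784329,MR2863853} controls a $\comi v^\gamma$-weighted $H^s_v$-norm (the $\comi v^\gamma$-gain being precisely what $\gamma\geq0$ requires); and a trilinear estimate for the operator $\mathcal T$ of \eqref{matht}, of the form $\abs{\comi{\hat{\mathcal T}(\hat g,\hat h,\omega),\hat\varphi}_{L^2_v}}\lesssim\norm{\hat g}_{L^2_v}\normm{\hat h}\,\normm{\hat\varphi}$ (with suitable $\comi v$-weights when $\gamma>0$ and $\omega$ Schwartz), which through \eqref{trb}--\eqref{def:conv} makes $L^1_mL^\infty_TL^2_v$ behave like a Banach algebra for the collision term.

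The heart of the matter is a family of time-weighted vector fields adapted to the hypoelliptic scaling of \eqref{eqforper}. With $\lambda>1+\frac1{2s}$ as in the statement, put
\[
\mathcal X_j:=t^{\lambda+1}\partial_{x_j},\qquad\mathcal V_j:=t^{\lambda}\partial_{v_j},\qquad j=1,2,3,
\]
so that \eqref{alpha1} reads exactly $\sup_{0<t\leq T}\norm{\mathcal X^\alpha\mathcal V^\beta f(t)}_{L^1_mL^2_v}\leq C^{\abs\alpha+\abs\beta+1}\big[(\abs\alpha+\abs\beta)!\big]^\tau$. The point of this choice is its commutation with $\mathcal P:=\partial_t+v\cdot\nabla_x+\mathcal L$: since $[\partial_t+v\cdot\nabla_x,\partial_{x_j}]=0$, $[\partial_t+v\cdot\nabla_x,\partial_{v_j}]=-\partial_{x_j}$, and $\mathcal L$ acts only on $v$,
\[
[\mathcal P,\mathcal X_j]=\tfrac{\lambda+1}{t}\,\mathcal X_j,\qquad
[\mathcal P,\mathcal V_j]=\tfrac{\lambda}{t}\,\mathcal V_j-\tfrac{1}{t}\,\mathcal X_j+[\mathcal L,\mathcal V_j];
\]
thus differentiating \eqref{eqforper} along these fields reproduces only fields of the same order carrying a $\tfrac1t$ coefficient, plus the single collision commutator $[\mathcal L,\mathcal V_j]$, which for $\gamma\geq0$ lowers the $v$-order and is absorbed by a small fraction of the coercive triple norm at the cost of one extra $\comi v^{\gamma}$ factor (hence all norms in the induction carry a polynomial velocity weight, large enough for this yet controlled by the Gaussian in $\Gamma$; for $\gamma>0$ one also weaves $\comi v$-factors into the fields themselves, without changing the scheme).

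Estimate \eqref{alpha1} is then obtained by induction on $N=\abs\alpha+\abs\beta$, carried jointly with the companion time-integrated dissipation bound $\int_0^T\normm{\mathcal X^{\alpha'}\mathcal V^{\beta'}f(t)}^2\,dt\leq C^{2N'+2}\big[N'!\big]^{2\tau}$ for $N'=\abs{\alpha'}+\abs{\beta'}\leq N-1$, needed because the nonlinearity is controlled in the dissipation norm. For $\abs\alpha+\abs\beta=N$ set $g=\mathcal X^\alpha\mathcal V^\beta f$; from \eqref{eqforper} and the relations above,
\[
\partial_tg-\tfrac{(\lambda+1)\abs\alpha+\lambda\abs\beta}{t}\,g+v\cdot\nabla_xg+\mathcal Lg=\mathcal X^\alpha\mathcal V^\beta\Gamma(f,f)+\mathcal R_N+[\mathcal L,\mathcal X^\alpha\mathcal V^\beta]f,
\]
where $\mathcal R_N$ gathers the transport-commutator terms, each of the form $\tfrac1t$ times a weighted field of order $N$ of $f$ in which one $\partial_v$ has been traded for one $\partial_x$ — still inside the order-$N$ shell. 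Running the $L^1_mL^2_v$ energy estimate (the transport being skew), one absorbs $c_0\normm g^2$ on the left, controls $[\mathcal L,\mathcal X^\alpha\mathcal V^\beta]f$ (after peeling off the weights it is of order $\leq N-1$, so bounded by the induction hypothesis and the coercive norm), and expands $\mathcal X^\alpha\mathcal V^\beta\Gamma(f,f)$ by the Leibniz formula for $\partial_x^\alpha\partial_v^\beta\mathcal T$, redistributing the time weights so that the two factors become $\mathcal X^{\alpha_1}\mathcal V^{\beta_1}f$ and $\mathcal X^{\alpha_2}\mathcal V^{\beta_2}f$ while the leftover $t^{\lambda\abs{\beta_3}}\partial_v^{\beta_3}\mu^{1/2}$ stays Schwartz: the two top terms (all derivatives on one factor, the other being $f$) are $\lesssim\norm f_{L^1_mL^2_v}\normm g^2\leq\eps\,\normm g^2$, absorbed by \eqref{smint}, while all remaining pieces are estimated by the induction hypothesis, the multinomial coefficients and the $\sim N$ commutator terms being exactly what promotes $[(N-1)!]^\tau$ to $[N!]^\tau$. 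The index $\frac1{2s}$ surfaces when quantifying, against the time weights, the gain the triple norm produces in the $\partial_v$-directions — modelled on the sharp bound $\sup_{r>0}(t^{1/(2s)}r)^{k}e^{-tr^{2s}}\lesssim C^{k}(k!)^{1/(2s)}$ for the fractional heat flow — which forces $\tau\geq\frac1{2s}$; the transport commutator, transferring $v$-regularity to $x$-regularity one derivative at a time with a $\tfrac1t$ loss, yields only analytic (not ultra-analytic) $x$-regularity, capping the index at $1$. Hence $\tau=\max\set{\frac1{2s},1}$, and the slack $\lambda-(1+\frac1{2s})>0$ is what makes the two mechanisms compatible.

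The main obstacle — and the reason for the weights $t^{\lambda+1},t^\lambda$ with $\lambda$ strictly above the bare parabolic exponents — is to render harmless the $\tfrac1t$-singular coefficients produced by $[\mathcal P,\mathcal X_j]$, $[\mathcal P,\mathcal V_j]$ and by $\mathcal R_N$: the diagonal term $\tfrac{(\lambda+1)\abs\alpha+\lambda\abs\beta}{t}\norm g^2\sim\tfrac Nt\norm g^2$ in the energy identity is not absorbed by $c_0\normm g^2$, and $g|_{t=0}=0$ does not rescue a crude Gr\"onwall either; one must instead close the estimate frequency-by-frequency in $m$ (and, morally, in the velocity frequency), matching the time weights against the diffusion exactly as in the model computation above, the surplus in $\lambda$ being what is then absorbed by $[\mathcal L,\mathcal V_j]$ and by the $\tfrac1t$-factor in $\mathcal R_N$. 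Two further technical layers: for $\gamma\geq0$, each $[\mathcal L,\partial_v]$ and each Leibniz differentiation of the kinetic factor $\abs{v-v_*}^\gamma$ inside $\mathcal T$ generates velocity weights growing with $N$, and one must check they accumulate no faster than geometrically while staying dominated by the Gaussian in $\Gamma$ and by the coercive weight in $\normm\cdot$; and, the Duan--Liu--Sakamoto--Strain solution being a priori only a mild solution in $L^1_mL^\infty_TL^2_v$, the differentiations, integrations by parts and uses of coercivity above have to be legitimized — either by running the whole scheme on a regularized equation and passing to the limit, or by first invoking the known $C^\infty$-smoothing and the Gevrey-$(1+\frac1{2s})$ bound of \cite{MR4356815} to justify the computations and then bootstrapping them to the sharp index $\tau$.
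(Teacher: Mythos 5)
Your overall framework (time-weighted vector fields, induction on the derivative order in the $L^1_mL^2_v$ Wiener setting, the coercivity and trilinear estimates, the Duan--Liu--Sakamoto--Strain solution plus the earlier Gevrey-$(1+\frac{1}{2s})$ result to legitimize the computations) matches the paper, but the paper's central idea is missing, and the step where your scheme must close is exactly the step you leave unresolved. You differentiate along the pure fields $\mathcal X_j=t^{\lambda+1}\partial_{x_j}$, $\mathcal V_j=t^{\lambda}\partial_{v_j}$, and, as you yourself record, the commutators with $\partial_t+v\cdot\nabla_x$ then produce (i) the diagonal term $\frac{(\lambda+1)\abs\alpha+\lambda\abs\beta}{t}\,g$ and (ii) the off-diagonal terms in which a $\mathcal V_j$ is traded for $\frac1t\mathcal X_j$, i.e.\ order-$N$ quantities multiplied by a non-integrable coefficient of size $\frac{N}{t}$. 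These cannot be absorbed by the coercive norm $\normm{\cdot}$: the dissipation gains only $s<1$ derivatives in $v$ and nothing directly in $x$, so there is no analogue of the heat-equation trick of trading $\frac1t$ for a derivative in the $x$-directions, and a crude Gr\"onwall produces a factor of order $(T/t_0)^{cN}$ that destroys the $[N!]^\tau$ bound. Your proposed remedy --- closing the estimate ``frequency-by-frequency in $m$, matching the time weights against the diffusion as in the model computation'' --- is not carried out and cannot work as stated: the model bound $\sup_{r>0}(t^{1/(2s)}r)^{k}e^{-tr^{2s}}\lesssim C^{k}(k!)^{1/(2s)}$ quantifies gain in the velocity frequency only, whereas decay in $m$ arises from the averaged symbol $\int_0^t\abs{\eta+\rho m}^{2s}d\rho$ (the hypoelliptic mechanism of the toy model), which a direct energy method on $\mathcal X,\mathcal V$ does not access; pushed through, that route reproduces the known index $1+\frac{1}{2s}$, not the sharp $\tau$.

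The paper's resolution is to avoid these commutator terms rather than absorb them: it works with the combined fields $H_\delta=\frac{1}{\delta+1}t^{\delta+1}\partial_{x_1}+t^{\delta}\partial_{v_1}$ of \eqref{vecM}, for which the weight-differentiation term and the transport term cancel, so that $[H_\delta^k,\ \partial_t+v\cdot\partial_x]=-\delta k\,t^{\delta-1}\partial_{v_1}H_\delta^{k-1}$ as in \eqref{kehigher} --- a single velocity derivative of a lower-order directional derivative, with a factor merely linear in $k$ and no $\frac{N}{t}$-diagonal term, which the diffusion does control (directly via $\normm{\cdot}\gtrsim\norm{\cdot}_{H^s_v}$ when $1/2\leq s<1$; via an $H^{-s}_v$ interpolation together with a joint induction on the two fields $H_{\delta_1},H_{\delta_2}$ when $0<s<1/2$, which is where the factor $k^{1/s}$ and hence the index $\frac{1}{2s}$ enters --- a case your sketch does not address). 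The classical derivatives are then recovered algebraically, since $t^{\lambda+1}\partial_{x_1}$ and $t^{\lambda}\partial_{v_1}$ are linear combinations of $H_{\delta_1}$ and $H_{\delta_2}$ with bounded time-dependent coefficients (see \eqref{generate}), combined with the elementary multiplier inequality \eqref{pse1}. Without this change of fields, or a genuine substitute for it, your induction does not close, so the proposal has a real gap at its core.
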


\begin{remark}
	As to be seen below, our argument relies on the restriction that $\gamma\geq 0.$  It is interesting to extend the result above to the case of soft potentials, which would require some new ideas.  We hope the method in this text may give insights on the  regularity of the soft potentials case and other related topics for more general spatially inhomogeneous Boltzmann equations.  
\end{remark}

\subsection{Sharpness of the Gevrey index} In view of  \eqref{taudef},  we have analytic regularization effect for the strong angular singularity case (i.e., $ 1/2\leq s<1$).  For the mild angular singularity case of  $0<s<1/2$,  only  Gevrey class regularization with  index $\frac{1}{2s}$  can be expected. In this part, we will confirm the sharpness of the Gevrey index through the  some toy models  of the Boltzmann equation.  To do so, we first  consider  the following  fractional Fokker-Planck equation in $\mathbb T_x^3\times\mathbb R_v^3$: 
\begin{equation}\label{fFP2}
\left\{
\begin{aligned}
	&\partial_tg+v\cdot \partial_xg+ (-\Delta_v)^{s}g=0,  \quad 0<s<1,\\
	& g |_{t=0}=g_0\in L_{x,v}^2,
\end{aligned}
\right.	 
 \end{equation}
 which is a toy model of the Boltzmann equation with Maxwellian molecules (i.e., $\gamma=0$ in \eqref{kern}).  
By  
performing   the full Fourier transform, we could  reformulate \eqref{fFP2} as the following transport equation: 
 \begin{equation*}
\left\{
\begin{aligned}
	& \partial_t\mathcal F_{x,v} g-m\cdot \partial_{\eta }\mathcal F_{x,v} g  +|\eta|^{2s} \mathcal F_{x,v} g  =0, \\
	& \mathcal F_{x,v} g |_{t=0}=\mathcal F_{x,v} g_0, 
\end{aligned}
\right.	 
\end{equation*}
 recalling $(m,\eta)$ are the Fourier dual variable of $(x,v)$.   By solving the above transport equation we get  
  an explicit solution $g$ to \eqref{fFP2}  satisfying  that 
\begin{equation}\label{fourie}
	\big(\mathcal F_{x,v}  g\big)(t,m,\eta)=e^{ -\int_{0}^t|\eta+\rho m|^{2s}d\rho }\, \big(\mathcal F_{x,v} g_0\big)(m,\eta+tm).  
\end{equation}
Moreover,  observe the fact that (cf. \cite[Lemma 3.1]{MR2523694} for instance) 
\begin{equation*}
  - t(\abs\eta^2 + t^2\abs  m^2)^s/c_s\leq 	-\int_{0}^t|\eta+\rho m|^{2s}d\rho  \leq   -c_st(\abs \eta^2 + t^2 \abs m^2)^s, 
\end{equation*}
and thus, for any $t>0,$
\begin{equation}\label{equiv}
  	  -  ( \abs m^2+\abs\eta^2 )^s/c_{s,t}\leq 	-\int_{0}^t|\eta+\rho m|^{2s}d\rho  \leq   -c_{s,t} ( \abs m^2+\abs\eta^2)^s, 
  \end{equation}
where   $c_s>0$ is a small constant depending    only on $s$,  and $c_{s,t}>0$ is  a small constant  depending only on $s$ and $t$.  Then, combining \eqref{fourie} and \eqref{equiv} yields that, for any $t>0,$
\begin{align*}
	&\norm{e^{ c_{s,t} (- \Delta_x- \Delta_v)^s}  g(t)}_{L^2_{x,v}}^2\\
	&=  \int_{\mathbb Z^3\times\mathbb R^3} e^{2c_{s,t}(\abs m^2+\abs \eta^2)^{s}} e^{ -2\int_{0}^t|\eta+\rho m|^{2s}d\rho }\, \big|\big(\mathcal F_{x,v} g_0\big)(m,\eta+tm) \big|^2    d\Sigma(m)d\eta \leq \norm{g_0}_{L^2_{x,v}}^2.
\end{align*}
Then, in view of the equivalent definition \eqref{eqgev} of Gevrey space,  
\begin{equation*}
\forall \ t>0, \quad	g(t,\cdot,\cdot)\in G^{\frac{1}{2s}}(\mathbb T_x^3\times\mathbb R_v^3).
\end{equation*}
Next we will show that the Gevrey index $\frac{1}{2s}$ is sharp.  To do so, 
  let $r$  be any given number satisfying   $0<r <\frac{1}{2s}$, and we choose such an initial datum $g_0$ in \eqref{fFP2}  that
  \begin{equation}\label{blup}
  \forall\ \eps>0,\quad \norm{e^{ \eps (- \Delta_x- \Delta_v)^\frac{1}{2r} } g_0}_{L^2_{x,v}}=+\infty,
  \end{equation}
  which means $g_0\notin G^{r}(\mathbb T_x^3\times\mathbb R_v^3).$ Moreover,  for any constant $c_*>0$,  we can find a constant $R$ depending only on $c_*$ and the constant $c_{s,t}$ in \eqref{equiv}, such that
  \begin{align*}
  	( \abs m^2+\abs\eta^2)^s/c_{s,t}\leq \frac{c_*}{2}( \abs m^2+\abs\eta^2)^{\frac{1}{2r}} +R,
  \end{align*}
 due to the fact that $0<r <\frac{1}{2s}$. Thus, with \eqref{equiv}, it follows that
 \begin{align*}
 	e^{2c_*(\abs m^2+\abs\eta^2)^{\frac{1}{2r}}}  e^{ -2\int_{0}^t|\eta+\rho m|^{2s}d\rho }\geq e^{-2R} 	e^{c_*(\abs m^2+\abs\eta^2)^{\frac{1}{2r}}}.
 \end{align*}  
As a result,  we use \eqref{fourie}   to conclude that, for any given  $t>0,$
\begin{align*}
	&\norm{e^{ c_*(- \Delta_x- \Delta_v)^\frac{1}{2r} } g(t)}_{L^2_{x,v}}^2\\
	&=\int_{\mathbb Z^3\times\mathbb R^3} e^{2c_*(\abs m^2+\abs\eta^2)^{\frac{1}{2r}}}  e^{ -2\int_{0}^t|\eta+\rho m|^{2s}d\rho }\, \big|\big(\mathcal F_{x,v} g_0\big)(m,\eta+tm) \big|^2   d\Sigma(m)d\eta\\
	&\geq e^{-2R} \int_{\mathbb Z^3\times\mathbb R^3} 	 e^{c_*  (\abs m^2+ \abs \eta^2)^{\frac{1}{2r}}}  \big|\big(\mathcal F_{x,v} g_0\big)(m,\eta+tm) \big|^2  d\Sigma(m)d\eta,
\end{align*}
which, with \eqref{blup} and the fact
 that
\begin{align*}
\abs m^2+ \abs \eta^2 \geq \frac{  \abs m^2+ \abs {\eta+t m}^2}{2(t^2+1)},
\end{align*}
  implies, for any given $t>0,$ 
\begin{equation*}
\forall \ c_*>0,  \quad 	\norm{e^{ c_*(- \Delta_x- \Delta_v)^\frac{1}{2r} } g(t)}_{L^2_{x,v}}=+\infty.
\end{equation*}
Thus
  $g(t)\notin G^{r}(\mathbb T_x^3\times\mathbb R_v^3)$ for $t>0$, and  we have proven that
    $\frac{1}{2s}$ is the sharp Gevrey index we may expect when investigating the regularization effect for the toy model \eqref{fFP2} of the Boltzmann equation.

    \smallskip
{\it (i) Mild angular singularity case}. For  $0<s<1/2$, we  get  in Theorem \ref{mainresult}  the regularization effect in the sharp Gevrey class  $\frac{1}{2s}$, coinciding with the index  for the toy model    \eqref{fFP2}.        
 
 \smallskip   
{\it (ii) Strong angular singularity and hard potentials}.  For the Boltzmann equation with strong angular singularity and hard potentials,   more approximate  model than  \eqref{fFP2}   is  
\begin{equation}\label{fFP1}
\left\{
\begin{aligned}
	&\partial_tg+v\cdot \partial_xg+\comi v^{\gamma}(-\Delta_v)^{s}g=0, \\
	&  g |_{t=0}=g_0\in L_{x,v}^2, 
\end{aligned}
\right.	 
\end{equation}
where $\comi v:=\inner{1+v^2}^{1/2}$, and $0<\gamma\leq 1, \ 1/2\leq s<1$.  Note the coefficient $\comi v^\gamma=(1+\abs v^2)^{\gamma/2}$ in \eqref{fFP1} is only (locally) analytic but not ultra-analytic for $0<\gamma\leq 1$,  then heuristically it seems reasonable that the ultra-analyticity could not be achievable and the analyticity  should be  the best regularity setting  we may expect for the toy model \eqref{fFP1},   and so is for the original Boltzmann equation. In Theorem \ref{mainresult},  the analytic smoothing effect is indeed confirmed by observing that $\tau=1$ in $\eqref{taudef}$ for $1/2\leq s<1.$ 
 
 \smallskip   
{\it (iii) Strong angular singularity and Maxwellian molecules}. For $\gamma=0$, we model  the Boltzmann equation     by 
  \eqref{fFP2}.  As shown above,  if $1/2\leq s<1$, then the toy model \eqref{fFP2} will admit the smoothing effect in the  ultra-analytic class $G^{\frac{1}{2s}}(\mathbb T_x^3\times\mathbb R_v^3)$ rather than in the analytic setting.  Naturally, we may   expect a similar ultra-analytic  smoothing effect for the Boltzmann equation when $\gamma=0$ and $1/2\leq s<1$, and this remains unknown at moment.  Here we mention the work of Barbaroux-Hundertmark-Ried-Vugalter \cite{MR3665667},  where they considered the spatially homogeneous  Boltzmann equation (i.e., $F=F(t,v)$ is independent of $x$)    and established the  regularization effect in the  Gevrey class with sharp  index $\frac{1}{2s}$ for the case of Maxwellian molecules.

\subsection{Difficulties and Methodologies}    When exploring the analyticity of the spatially inhomogeneous Boltzmann equation,  the main difficulty arises from the degeneracy in the spatial direction. Compared with elliptic equations that  usually admit analytic regularity,  we may only expect Gevrey regularity for general hypoelliptic equations.  For the specific hypoelliptic Boltzmann equation,  when    performing  the standard   energy,  the key part is the treatment of the commutator between   $\partial_{ v}$  and the transport operator $\partial_t+v\cdot\partial_x$, since the spatial derivative $\partial_x$ will be involved in.  To overcome the degeneracy in spatial direction,  we may apply a global pseudo-differential calculus to derive the intrinsic hypoelliptic structure  induced by the non-trivial interaction between the diffusion part  and the transport part.  This hypoellipticity enables us to conclude the smoothing effect in Gevrey space of index $1+\frac{1}{2s}$;  interested readers may refer to \cite{MR3950012,MR4356815} and the references therein.

Inspired by the regularization effect for the toy model 
\eqref{fFP2}, we would expect similar regularity properties for the  Boltzmann equation.  Recently  in   \cite{MR4612704}, the last two authors and Cao verified the analytic smoothing effect for the Landau equation.  This equation can be regarded as a diffusive model of the Boltzmann equation,  obtained as a grazing limit of the latter.  Note that  the linear Landau collision behaves  as  the differential operator $\Delta_v$,  rather than the fractional Laplacian in the Boltzmann counterpart, so   the treatment of the Landau equation is usually   simpler than that of the   Boltzmann equation.  Although  less technicality is involved in the  Landau collision case  than  the Boltzmann counterpart,   the methods developed for the Landau equation may usually apply to the Boltzmann equation with technical modifications.   However, the situation could be quite different if we investigate the analytic or more general Gevrey class regularity of the two equations.  In fact,  to obtain the Gevrey class regularity,  the key and subtle part  is   to derive quantitative estimates with respect to the orders of derivatives,  which is usually hard for the highly non-linear collision terms.  To explore the analytic smoothing effect of the Landau equation,   the argument therein relies crucially on some differential calculus so that Leibniz's formula may apply when handling the non-linear Landau collision part. However, there will be essential difficulties for the Boltzmann collision term   if we apply a similar  argument as that in the Landau equation with modifications, since the Boltzmann collision behaves as a  pseudo-differential rather than a differential operator so that we have to work with pseudo-differential calculus which prevents us to apply  Leibniz's formula.  Precisely,  the analytic smoothing effect of the Landau equation, obtained in \cite{MR4612704}, relies on the  following second-order  differential operator:
 \begin{eqnarray*}
	M
=  -\int^t_{0}|\partial_{v}+\rho \partial_{x}|^2 d\rho
=-  t\Delta_{v}-t^2 \partial_{x}\cdot  \partial_{v}-\frac{t^3}{3}\Delta_{x}, 
\end{eqnarray*}
which is elliptic  in $x$ and $v$ variables. 
The introduction of $M$ is inspired by the explicit solution to the Fokker-Planck equation (i.e., a specific form of equation \eqref{fFP2} with $s=1$).  We could take advantage of the strong diffusion property  (i.e. the heat diffusion $-\Delta_v$) of the Landau collision part to control the 
   commutator between $M$ and the transport operator $\partial_t+v\cdot\partial_x$ which is
\begin{equation}\label{comMT}
	[M, \ \partial_t+v\cdot\partial_x]=\Delta_v,
\end{equation}
 recalling $[\cdot,\, \cdot]$ stands for the commutator between two operators.
Moreover, 
 the quantitative estimates on  the commutators between $M^k, k\in\mathbb Z_+,$ and the non-linear Landau collision part is hard,  but  achievable with the help of Leibniz type formula (see \cite[Lemma 4.2]{MR4612704}). This enables to perform quantitative estimates on $M^kf$ with $k\in\mathbb Z_+$ and then derive, with the help of the ellipticity of $M$,   the analytic regularization effect of the Landau equation.   Note that we can not apply directly  the above operator $M$ to the Boltzmann equation, since the  Boltzmann collision part behaves as a fractional Laplacian $(-\Delta_v)^s, 0<s<1,$ and the diffusion is too weak to control  the commutator \eqref{comMT} between $M$ and the transport operator.  Inspired by the explicit representation \eqref{fourie}, a natural attempt  is to modify $M$ as follows to save the game:
\begin{eqnarray*}
M_s:	=-\int^t_{0}\big(1+|D_{v}+\rho D_{x}|^{2}\big)^s d\rho ,
\quad D_x=\frac{\partial_x}{i} \textrm{ and }D_v=\frac{\partial_v}{i},
\end{eqnarray*} 
where $M_s$ is  a  Fourier multiplier defined by
\begin{align*}
	\mathcal F_{x,v}(M_s f)(t,m,\eta)=-\int^t_{0} \big(1+|\eta+\rho m|^{2}\big)^s d\rho   	(\mathcal F_{x,v} f)(t,m,\eta).
\end{align*}
 Observe 
 \begin{equation*}
 	 \int^t_{0} m\cdot\partial_\eta \big(1+|\eta+\rho m|^{2}\big)^s d\rho= \int^t_{0} \frac{d}{d\rho} \big(1+|\eta+\rho m|^{2}\big)^s d\rho=\big(1+|\eta+t m|^{2}\big)^s -\big(1+|\eta|^{2}\big)^s, 
 \end{equation*}
 which implies,  
 \begin{align*}
 	[\psi(t,m,\eta),\  \partial_t-m\cdot\partial_\eta]=\big(1+|\eta|^{2}\big)^s,\quad \psi(t,m,\eta):= -\int^t_{0} \big(1+|\eta+\rho  m|^{2}\big)^s d\rho,
 \end{align*}
Thus the commutator 
\begin{equation*}
	[M_s, \partial_t+v\cdot\partial_x]=(1-\Delta_v)^s
\end{equation*}
 could be controlled by the diffusive  part of  the Boltzmann collision.  Moreover, we need to handle  the commutator 
  \begin{equation*}
 	M_s^k\Gamma(f,f)-\Gamma(f, M_s^kf), \quad   k\in\mathbb Z_+,
 \end{equation*}
where $\Gamma$   is the non-linear Boltzmann collision operator defined by \eqref{def.nlt}.  
 It is not hard to control the above commutator by constants $C_k$  depending on $k$.
  However, it is quite difficult and seems not possible to get a quantitative upper-bound   
with respect to  $k\in\mathbb Z_+$, saying   
\begin{equation*}
	C^{k+1}(k!)^r 
\end{equation*}
with $C$ a constant independent of $k,$  
 since $M_s$ is a pseudo-differential  rather than a differential operator so that Leibniz's formula can not apply.   
Thus, to handle the non-linear Boltzmann collision part,  it seems reasonable to work with    differential rather than pseudo-differential operators,  so that we could take advantage of Leibniz's formula  as well as induction argument to derive quantitative estimates with respect to derivatives. On the other hand, the classical one-order differential operator $\partial_v$ or $\partial_x$ is not a good choice,  since the Boltzmann equation is degenerate in the spatial variable $x$ and the spatial derivative $\partial_x$  will appear in the commutator between  $ \partial_{ v}$  and the transport operator.  

 The new idea in this text is that   instead of  the sole  $ \partial_{ x}$ or $ \partial_{ v}$,  we  work with the following combination of $\partial_x$ and $\partial_v$ with time-dependent coefficients: 
\begin{equation*}
	\xi(t)\partial_{x_j}+\zeta(t)\partial_{v_j}, \quad 1\leq j\leq 3, 
\end{equation*}  
such that, denoting by $\rho'(t)$ the time derivative of  function $\rho(t),$  
\begin{equation}\label{lascommut}
	[\xi(t)\partial_{x_j}+\zeta(t)\partial_{v_j}, \ \partial_t+v\cdot\partial_x] =-\xi'(t)\partial_{x_j}-\zeta'(t)\partial_{v_j}+\zeta(t)\partial_{x_j}=-\zeta'(t)\partial_{v_j}.
	\end{equation}
	As to be seen in the last two sections, the commutator above indeed  can be controlled by the diffusive Boltzmann operator.   
The choice of $\xi(t)$ and $\zeta(t)$ is flexible, 
provided  $\xi'(t)=\zeta(t)$.
For the sake of simplicity,  we choose  $\xi=(1+\delta)^{-1}t^{\delta+1}$  and $\zeta=t^\delta$  and consider  a family of one-order differential operators $H_\delta $  defined by
   \begin{equation}
   	\label{vecM}
   H_\delta= \frac{1}{\delta+1}t^{\delta+1} \partial_{x_1}+ t^{\delta} \partial_{v_1},
   \end{equation}
   where $\delta$ satisfies that
\begin{equation}\label{relation}
 1+\frac{1}{2s}<\delta.
 \end{equation}
In view of \eqref{lascommut},     the spatial derivatives are not involved in the commutator  between $H_\delta$ and the transport operator, that is,  
  \begin{equation}\label{keyob}
  	[H_\delta, \,\,  \partial_t+v\,\cdot\,\partial_x ]=-\delta t^{\delta-1}\partial_{v_1}.
  \end{equation}
  More generally, we have
 \begin{equation}\label{kehigher}
\forall\ k\geq 1,\quad 	[H_\delta ^k, \,\,  \partial_t+v\,\cdot\,\partial_x ]=-\delta kt^{\delta-1}\partial_{v_1} H_\delta^{k-1},
\end{equation}
which can be derived by using induction on $k$.  In fact, the validity of \eqref{kehigher} for $k=1$ follows from \eqref{keyob}. Now supposing  that
 \begin{equation}\label{elli}
\forall\ \ell\leq k-1, \quad 	[H_\delta^\ell, \,\,  \partial_t+v\,\cdot\,\partial_x ]=-\delta \ell t^{\delta-1}\partial_{v_1} H_\delta^{\ell-1},
\end{equation}
we will prove the validity of \eqref{elli} for $\ell=k\geq 2$. To do so,
 we use \eqref{keyob} and \eqref{kehigher} as well as the fact that
 \begin{align*}
 	[T_1T_2,\  T_3] =T_1T_2T_3-T_3T_1T_2 =T_1[T_2,\ T_3]+[T_1,\ T_3]T_2,
 \end{align*}
to compute
\begin{align*}
	&[H_\delta^k, \,\,  \partial_t+v\,\cdot\,\partial_x ] =[H_\delta H_\delta^{k-1}, \,\,  \partial_t+v\,\cdot\,\partial_x ]\\
	&=H_\delta[H_\delta^{k-1}, \,\,  \partial_t+v\,\cdot\,\partial_x ]+[H_\delta, \,\,  \partial_t+v\,\cdot\,\partial_x ]H_\delta^{k-1}\\
	&=H_\delta\inner{-\delta (k-1) t^{\delta-1}\partial_{v_1} H_\delta^{k-2}}-\delta t^{\delta-1}\partial_{v_1} H_\delta^{k-1}\\
	&=-\delta (k-1) t^{\delta-1}\partial_{v_1} H_\delta^{k-1} -\delta t^{\delta-1}\partial_{v_1} H_\delta^{k-1}=-\delta k t^{\delta-1}\partial_{v_1} H_\delta^{k-1}.
\end{align*}
This gives the validity of \eqref{elli} for $\ell=k$. Thus \eqref{kehigher} holds true for all $k\geq 1$.  This enables us to apply the diffusion in the  velocity direction to obtain a crucial estimate of the directional derivatives $H_\delta^kf$ for solution $f$.
Moreover, the classical derivatives can be generated by the linear combination of $H_\delta$ for suitable  $\delta$ with time-dependent coefficients, so that the desired quantitative estimate on the classical derivatives is  available (see \eqref{generate} below for the explicit formulation).

In this text let   $\lambda$ be an arbitrarily   given number satisfying  \eqref{relation},  that is, $\lambda>1+\frac{1}{2s}$.   We define   $\delta_1$ and $\delta_2$ in terms of $\lambda$ by setting
\begin{equation}\label{de1de2}
	\delta_1 =\lambda, \quad 	\delta_2=\left\{
	\begin{aligned}
		& 1+2s+(1-2s)\lambda,\quad \textrm{ if } \  0< s<1/2.\\
		 &\frac{1}{2}\Big(\lambda+1+\frac{1}{2s}\Big),\quad\textrm{ if } \ 1/2\leq s<1.
	\end{aligned}
	\right.
\end{equation}
By virtue of the fact that $\lambda>1+\frac{1}{2s} $,  direct computation yields that
\begin{equation}\label{lodelta}
	\delta_1>\delta_2>1+\frac{1}{2s},
\end{equation}
So  both $\delta_1$ and $\delta_2$ satisfy \eqref{relation}.  With $\delta_1$ and  $\delta_2$ given above,
  let $H_{\delta_1}$ and  $H_{\delta_2}$ be  defined by \eqref{vecM}:
\begin{equation*}
	H_{\delta_1}=\frac{1}{\delta_1+1} t^{\delta_1+1}\partial_{x_1}+t^{\delta_1}\partial_{v_1},\quad
  H_{\delta_2}=\frac{1}{\delta_2+ 1} t^{\delta_2+1}\partial_{x_1}+t^{\delta_2}\partial_{v_1}.
\end{equation*}
Then  $\partial_{x_1}$ and $\partial_{v_1}$ can be generated by the linear combination of $H_{\delta_j}, j=1,2$, that is,
\begin{equation}
	\label{generate}
\left\{
\begin{aligned}
&t^{\lambda+ 1}\partial_{x_1}=	t^{\delta_1+ 1}\partial_{x_1}=\frac{(\delta_2+ 1)(\delta_1+1)}{\delta_2-\delta_1}  H_{\delta_1}-\frac{(\delta_2+ 1)(\delta_1+1)}{\delta_2-\delta_1} t^{\delta_1-\delta_2}H_{\delta_2},\\
&t^{\lambda}\partial_{v_1}=t^{\delta_1}\partial_{v_1}=-\frac{\delta_1+ 1}{\delta_2-\delta_1} H_{\delta_1}+\frac{\delta_2+ 1}{\delta_2-\delta_1}t^{\delta_1-\delta_2}H_{\delta_2} .
\end{aligned}
\right.
\end{equation}
This enables to control the classical derivatives in terms of the directional derivatives in $H_{\delta_1} $ and $H_{\delta_2} $.

   \subsection{Arrangement of the paper}
    The rest of this paper is arranged as follows. In Section \ref{sec:prelim},  we   recall a few  preliminary  estimates  that will be used throughout the argument.  Section \ref{sec:non-linear}  is devoted to estimating  the commutator between  directional derivatives and collision operator.   The proof of the main result is presented in Sections \ref{sec: strong} and \ref{sec:mild},  where   we  treat,  respectively,  the  strong  angular singularity case  and  the mild one.

 \section{Preliminaries}\label{sec:prelim}

  In this part, we will recall some   estimates to be used later.  Let $\mathcal L$ be the linearized Boltzmann operator in \eqref{linearbol} and let $\normm{\cdot}$ be the triple norm defined by \eqref{trinorm}. Then by the coercive estimate and   identification of the triple norm   (cf.\cite[Propositions 2.1 and 2.2]{MR2863853} for instance), it follows that
 \begin{equation}\label{rela}
\forall\ f\in \mathcal S(\mathbb R_v^3),\quad  c_0\normm f^2 \leq   \inner{\mathcal L f, f}_{L^2 _v} + \norm{f}_{L^2 _v}^2,
  \end{equation}
  and that, for Maxwellian molecules  and hard potential cases that $\gamma\geq 0$,
 \begin{equation}
	\label{+lowoftri}
\forall\ f\in \mathcal S(\mathbb R_v^3),\quad 	c_0 \norm{f}_{H^s_{v}} \leq \normm{f},
\end{equation}
where $s$ is the number in \eqref{angu}, and $c_0>0$ is a small constant, and  $\mathcal S(\mathbb R_v^3)$ stands for the Schwartz Space  in $\mathbb R_v^3$. Note the above estimates still hold true for any $f$ such that $\normm f<+\infty.$  

 For simplicity of notations,   we will use $C_0$, in the following argument,  to denote a generic constant which may vary from line to line by enlarging $C_0$ if necessary.    
Now, we recall the   trilinear estimate of the collision operator,   which says (cf.  \cite[theorem 2.1]{MR2784329}) that, for any $f,g,h\in\mathcal S (\mathbb R_v^3),$
\begin{equation}\label{trin}
\big|\big(  \mathcal T ( f, g, \mu^{1/2}),  \  h\big)_{L^2_v}\big|=\big|\big(  \Gamma ( f, g),  \  h\big)_{L^2_v}\big|\leq C_0\norm{f}_{L_v^2}\cdot \normm{ g} \cdot \normm{ h},\end{equation}
recalling $\mathcal T$ is defined in \eqref{matht}.
Furthermore, we mainly employ the counterpart of the above estimate after performing the partial Fourier transform in $x$ variable.    Then, by \cite[Lemma 3.2]{MR4230064}),  the following estimate
 \begin{multline}\label{upptrifour}
  \big|\big( \hat{\mathcal T} ( \hat f, \hat g, \mu^{1/2}),  \  \hat h\big)_{L^2_v}\big| =	\big|\big( \hat \Gamma{(\hat f(m), \hat g(m))}, \hat h(m)\big)_{L^2_v}\big |\\
  \leq C_0 \normm{\hat h(m)} \int_{\mathbb Z^3 } \norm{\hat f(m-\ell)}_{L^2_v}   \normm{\hat g(\ell)}    d\Sigma(\ell)
  \end{multline}
holds true for any $m\in\mathbb Z^3$ and for   any  $f,g,h\in L_m^1(\mathcal S(\mathbb R_v^3))$.  More generally,  if   $\omega=\omega(v)$  is a given function of $v$ variable  satisfying the condition that there exists a constant $\tilde C>0$ such that
\begin{equation}\label{contild}
  \forall\ v\in\mathbb R^3, \quad |  \omega (v) | \leq
\tilde C  \mu(v)^{1/4},
\end{equation}
 then following the same argument for proving \eqref{trin}, with $\mu^{1/2}$ therein replaced by $\omega,$ gives that
\begin{equation*}
	 \forall\, f,g,h\in\mathcal S (\mathbb R_v^3), \quad
\big|\big(  \mathcal T ( f, g,   \omega),  \  h\big)_{L^2_v}\big|\leq C_0 \tilde C \norm{f}_{L_v^2}\cdot \normm{ g} \cdot \normm{ h}.
\end{equation*}
As a result, similar to \eqref{upptrifour}, we   perform  the partial Fourier transform in $x$ variable to conclude
\begin{equation}\label{ketres}
 		\big|\big( \hat{\mathcal{T}}(\hat{f}, \hat g,  \omega), \hat h(m)\big)_{L^2_v}\big |\leq C_0 \tilde C \normm{\hat h(m)} \int_{\mathbb Z^3 } \norm{\hat f(m-\ell)}_{L^2_v}   \normm{\hat g(\ell)}    d\Sigma(\ell)
 \end{equation}
 with $\tilde C$   the constant in \eqref{contild}.   In particular, if    $g$ in \eqref{ketres} is a function of only  $v$ variable, then \eqref{ketres} reduces to 
\begin{equation*}
		\big|\big( \hat{\mathcal{T}}(\hat{f},   g,  \omega), \hat h(m)\big)_{L^2_v}\big |\leq C_0 \tilde C \norm{\hat f(m)}_{L^2_v}   \normm{  g } \times   \normm{\hat h(m)}. 	\end{equation*}
This, with the fact  that   (cf. \cite[Proposition 2.2]{MR2863853})  
\begin{equation*}
	\normm{ g} \leq \tilde c \norm{ (1+|v|^{2+\gamma}-\Delta_v)  g}_{L_v^2 }
\end{equation*} 
for some constant $\tilde c>0$, yields that, enlarging $C_0$ if necessary, 
\begin{equation*}
	\big|\big( \hat{\mathcal{T}}(\hat{f},   g,  \omega), \hat h(m)\big)_{L^2_v}\big |\leq C_0 \tilde C \norm{\hat f(m)}_{L^2_v}  \norm{ (1+|v|^{2+\gamma}-\Delta_v)  g}_{L_v^2 }\normm{\hat h(m)}. 
\end{equation*} 
 As a result, if   $g=g(v)\in \mathcal S(\mathcal R_v^3)$ is any  function of only $v$ variable,   satisfying  the condition  that  there exists a constant $\tilde C_\gamma>0$, depending only on the number $\gamma$ in \eqref{kern},  such that 
\begin{equation}\label{contild+++}
  \forall\ v\in\mathbb R^3, \ \forall \ k\in\mathbb Z_+, \quad |  (1+|v|^{2+\gamma}-\Delta_v)\partial_v^k g(v) | \leq \tilde C_\gamma L_k
   \mu(v)^{1/8},
\end{equation}  
with $L_k$   constants depending only on $k,$
then  
\begin{equation}\label{trisole}
	\forall\ k\in\mathbb Z_+,\quad 	\big|\big( \hat{\mathcal{T}}(\hat{f},  \partial_v^k g,  \omega), \hat h(m)\big)_{L^2_v}\big |\leq C_0 \tilde C  L_k \norm{\hat f(m)}_{L^2_v}     \normm{\hat h(m)} \end{equation}
by enlarging $C_0$ if necessary,   recalling  $\tilde C$ is the constant  given in \eqref{contild}.   Similarly,  for any functions $ \omega=\omega(v)$ and $g=g(v) $ of only $v$ variable, satisfying \eqref{contild} and \eqref{contild+++}, respectively, we have  
 \begin{equation}\label{tretmate}
	\forall\ k\in\mathbb Z_+,\quad 	\big|\big( \hat{\mathcal{T}}(\partial_v^k g,  \hat{f},    \omega), \hat h(m)\big)_{L^2_v}\big |\leq C_0 \tilde C  L_k \normm{\hat f(m)}\times     \normm{\hat h(m)}. \end{equation}
Finally, we recall an estimate (cf. \cite[Lemma 2.5]{MR4356815})  that will be frequently used to control the non-linear term $\Gamma(f,g)$.
 For an arbitrarily given integer $j_0 \geq 1$, it holds that
	\begin{equation}\label{MF}
	\begin{aligned}
	&\int_{\mathbb{Z}^3}\bigg[\int_{0}^{T}\Big(\int_{\mathbb{Z}^3}\sum_{1 \leq j \leq j_0}\norm{\hat{f}_j(t,m-\ell)}_{L^2_v}\normm{  \hat{g}_j(t,\ell)}d\Sigma(\ell)\Big)^{2}dt\bigg ]^{1/2}d\Sigma(m)\\
	& \leq  \sum_{j=1}^{j_0}\Big(\int_{\mathbb{Z}^3}\sup\limits_{0 < t \leq T}\norm{\hat{f}_j(t,m)}_{L^2_v}d\Sigma(m)\Big)\int_{\mathbb{Z}^3}\Big(\int_{0}^{T}\normm{  \hat{g}_j(t,m)}^{2}dt\Big)^{1\over2}d\Sigma(m),
	\end{aligned}
	\end{equation}
	for any $f_j \in L_m^1L_T^{\infty}L_v^2$ and any $g_j$ such that $\normm{ g_j} \in L_m^1L_T^{2}$ with $1 \leq j \leq j_0$. It can be derived directly by Minkowski's inequality and Fubini's theorem, cf.  \cite[Lemma 2.5]{MR4356815} for detail.

 \section{Commutator estimates}\label{sec:non-linear}
 This part  is devoted to  dealing with the commutator between the directional derivative  $ H_\delta^{k}$ and  the collision part $\Gamma(g,h),$
 recalling $H_\delta$ is defined by \eqref{vecM}.  With the notations in Subsection \ref{notafun},   the results  on commutator estimates can be stated as follows.

\begin{proposition} \label{lemgamma}
Assume that the cross-section satisfies \eqref{kern} and \eqref{angu} with $  \gamma \geq 0 $ and $ 0<s <1$.  Recall $H_{\delta}$   is defined by \eqref{vecM}, with  $\delta$   an arbitrarily given number satisfying \eqref{relation}.   Let  $k\geq 1$ and $T\geq 1$ be  given, and
 	 let $f\in L_m^1L_T^\infty L_v^2$  be any solution to the Cauchy problem \eqref{eqforper} satisfying that
 \begin{equation}\label{apbound}
   \int_{\mathbb{Z}^3}\sup\limits_{0 < t \leq T}\norm{\widehat{H_{\delta}^{k}f}(t,m)}_{L^2_v}d\Sigma(m)
  +      \int_{\mathbb{Z}^3}\Big (\int_{0}^{T}\normm{ \widehat{H_{\delta}^{k}f}(t,m)}^{2}dt\Big )^{1\over2}d\Sigma(m)
<+\infty.
\end{equation}
  Suppose that    for any  $j\leq k-1$  we have
\begin{multline}\label{suppose}
\int_{\mathbb{Z}^3}\sup\limits_{0 < t \leq T}\norm{\widehat{H_{\delta}^{j}f}(t,m)}_{L^2_v}d\Sigma(m)\\+ \int_{\mathbb{Z}^3}\left(\int_{0}^{T}\normm{  \widehat{H_{\delta}^{j}f}(t,m)}^{2}dt\right)^{1/2}d\Sigma(m) \leq \frac{\eps_0C_{*}^{j}( j!)^\tau}{(j+1)^2},
\end{multline}
where $\tau\geq 1$ is given in \eqref{taudef}, and $\eps_0,C_*>0$ are two constants.  If    $C_*\geq 4T^\delta $, then   there exists a constant $C$,  depending only on the number  $ C_0$ in   \eqref{ketres} but independent of $k$,  such that  for any $\eps >0$ we have
\begin{equation*}
\begin{aligned}
&\int_{\mathbb{Z}^3}\left(\int_{0}^{T}\big|\big(\mathcal{F}_x(H_{\delta}^{k}\Gamma(f,f)), \widehat{H_{\delta}^{k}f}\big)_{L^2_v}\big|dt\right)^{1/2}d\Sigma(m)
\\ & \leq    C{\eps}^{-1}\eps_0   \int_{\mathbb{Z}^3}\sup\limits_{0 < t \leq T}\norm{\widehat{H_{\delta}^{k}f}(t,m)}_{L^2_v}d\Sigma(m)\\
&\quad +    \inner{\eps+C{\eps}^{-1}\eps_0}  \int_{\mathbb{Z}^3}\Big (\int_{0}^{T}\normm{ \widehat{H_{\delta}^{k}f}(t,m)}^{2}dt\Big )^{1/2}d\Sigma(m) +C \eps^{-1}  \eps_0^2 \frac{C_{*}^{k} (k!)^\tau}{(k+1)^2}.
\end{aligned}
\end{equation*}
\end{proposition}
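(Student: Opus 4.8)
The plan is to first derive an exact Leibniz-type expansion of $H_\delta^k\Gamma(f,f)$ and then estimate it with the trilinear bounds of Section~\ref{sec:prelim}. The starting point is that the trilinear operator $\mathcal T$ of \eqref{matht} is covariant under translations in the velocity variable: the map $(v,v_*,v',v_*')\mapsto(v+w,v_*+w,v'+w,v_*'+w)$ preserves both $|v-v_*|$ and the deviation angle $\theta$, so $\mathcal T(\tau_wg,\tau_wh,\tau_w\omega)=\tau_w\mathcal T(g,h,\omega)$ for the velocity-translation $\tau_w$, and differentiating at $w=0$ along $e_1$ gives the three-term rule
\[
\partial_{v_1}\mathcal T(g,h,\omega)=\mathcal T(\partial_{v_1}g,h,\omega)+\mathcal T(g,\partial_{v_1}h,\omega)+\mathcal T(g,h,\partial_{v_1}\omega).
\]
Combined with the trivial identity $\partial_{x_1}\mathcal T(g,h,\omega)=\mathcal T(\partial_{x_1}g,h,\omega)+\mathcal T(g,\partial_{x_1}h,\omega)$ and the fact that $H_\delta=\tfrac1{\delta+1}t^{\delta+1}\partial_{x_1}+t^{\delta}\partial_{v_1}$ involves no $\partial_t$ (so its $t$-coefficients are inert scalars), this yields $H_\delta\mathcal T(g,h,\omega)=\mathcal T(H_\delta g,h,\omega)+\mathcal T(g,H_\delta h,\omega)+t^{\delta}\mathcal T(g,h,\partial_{v_1}\omega)$, and then, iterating and using $\Gamma(f,f)=\mathcal T(f,f,\mu^{1/2})$ from \eqref{trb},
\[
H_\delta^k\Gamma(f,f)=\sum_{a+b+c=k}\frac{k!}{a!\,b!\,c!}\,t^{c\delta}\,\mathcal T\big(H_\delta^{a}f,\,H_\delta^{b}f,\,\partial_{v_1}^c\mu^{1/2}\big).
\]
After the partial Fourier transform in $x$ and pairing with $\widehat{H_\delta^{k}f}$, I would split this finite sum into the two ``top-order'' terms $a=k$ and $b=k$ and the ``commutator'' terms with $a,b\le k-1$.

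For the top-order terms I would use \eqref{upptrifour}. In the $b=k$ term the factor $\norm{\hat f(m-\ell)}_{L^2_v}$ carries the small solution norm; Young's inequality in $t$ peels off an $\eps$-multiple of $\int_{\mathbb Z^3}\big(\int_0^T\normm{\widehat{H_\delta^{k}f}(m)}^2dt\big)^{1/2}d\Sigma(m)$, and the remainder is controlled by \eqref{MF} with $(\hat f_1,\hat g_1)=(\hat f,\widehat{H_\delta^{k}f})$, which brings out $\int_{\mathbb Z^3}\sup_{0<t\le T}\norm{\hat f(m)}_{L^2_v}d\Sigma(m)\le\eps_0$ (the case $j=0$ of \eqref{suppose}); this produces the $(\eps+C\eps^{-1}\eps_0)$-multiple of the triple-norm quantity. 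The $a=k$ term is treated symmetrically, except \eqref{MF} is now applied with $(\hat f_1,\hat g_1)=(\widehat{H_\delta^{k}f},\hat f)$, so that $\eps_0$ multiplies $\int_{\mathbb Z^3}\sup_{0<t\le T}\norm{\widehat{H_\delta^{k}f}(m)}_{L^2_v}d\Sigma(m)$ instead, producing the $C\eps^{-1}\eps_0$-multiple of the sup-norm quantity.

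For the commutator terms I would apply \eqref{ketres} with the weight $\omega=\partial_{v_1}^c\mu^{1/2}$, which satisfies \eqref{contild} with a constant $\tilde C_c\le D^{c+1}(c!)^{1/2}$, $D$ absolute, by the Hermite bound on Gaussian derivatives. The crucial bookkeeping step is to pull the common factor $\normm{\widehat{H_\delta^{k}f}(m)}$ out of the sum over $(a,b,c)$ \emph{before} the $t$-integration and apply Young's inequality only once (a term-by-term Young would load the multinomial coefficients onto the coefficient of $\normm{\widehat{H_\delta^{k}f}}^2$ and ruin the estimate), then use Minkowski's inequality in $L^2_t$ to distribute the remaining $L^2_t$-norm across the sum before invoking \eqref{MF} and the inductive hypothesis \eqref{suppose} for the orders $a,b\le k-1$. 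This controls the commutator part by an $\eps$-multiple of the triple-norm quantity plus $C\eps^{-1}$ times
\[
C_0\,\eps_0^2\sum_{a+b+c=k}\frac{k!}{a!\,b!\,c!}\,T^{c\delta}\,\tilde C_c\,\frac{C_*^{a+b}(a!)^\tau(b!)^\tau}{(a+1)^2(b+1)^2},
\]
and it remains to check that this is $\le C\,\eps_0^2\,C_*^{k}(k!)^\tau/(k+1)^2$. The ingredients are: $\frac{k!}{a!b!c!}(a!)^\tau(b!)^\tau\le(k!)^\tau/c!$, using $\tau\ge1$ and $a!\,b!\le(a+b)!\le k!$; $T^{c\delta}C_*^{a+b}C_*^{-k}=(T^\delta/C_*)^c\le 4^{-c}$, using $C_*\ge4T^\delta$; the absolute estimate $\sum_{a+b=n}(a+1)^{-2}(b+1)^{-2}\le C_\#(n+2)^{-2}$ together with $(k+1)/(k-c+2)\le c+1$; and the convergence of $\sum_{c\ge0}\tilde C_c(c+1)^2(c!)^{-1}4^{-c}$. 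Combining these gives the claimed bound with $C$ depending only on $C_0$, the $\mathcal S(\mathbb R_v^3)$-restrictions in \eqref{upptrifour}--\eqref{ketres} being removed by a routine density argument.

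The step I expect to be the main obstacle is precisely this last one: keeping the inductive weights $(a+1)^{-2}(b+1)^{-2}$ intact rather than square-rooted — a square root of the weights would introduce a $\log k$ loss that breaks the induction — which is what dictates the ordering ``factor out $\normm{\widehat{H_\delta^{k}f}(m)}$, apply Young once, apply Minkowski in $L^2_t$, then \eqref{MF}'' for the commutator sum. A secondary point that must be handled with care is the translation-covariance argument underlying the clean three-term Leibniz rule for $\mathcal T$, and hence the multinomial structure of the expansion above, together with the verification that each weight $\partial_{v_1}^c\mu^{1/2}$ still meets the hypothesis of \eqref{ketres}.
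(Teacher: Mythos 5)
Your proposal is correct and follows essentially the same route as the paper: the same Leibniz/multinomial expansion of $H_\delta^k\Gamma(f,f)$ into trilinear terms $\mathcal T(H_\delta^a f,H_\delta^b f,H_\delta^c\mu^{1/2})$, the Fourier-side trilinear bounds \eqref{upptrifour}--\eqref{ketres} with the Gaussian-derivative weight, Young's inequality to peel off the $\eps$-term, \eqref{MF} combined with the inductive hypothesis \eqref{suppose}, the condition $C_*\geq 4T^\delta$ to sum the $T^{\delta c}$ factors geometrically, and the convolution estimate on the weights $(a+1)^{-2}(b+1)^{-2}$. The only differences are cosmetic: you group the terms as two top-order pieces plus one commutator sum instead of the paper's $\mathcal J_1,\mathcal J_2,\mathcal J_3$, and you justify the three-term Leibniz rule by translation covariance, which the paper takes for granted.
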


\begin{remark}
We impose  assumption  \eqref{apbound} to ensure rigorous  rather than formal computations in the proof of Proposition \ref{lemgamma}
	   when performing estimates involving the term $H_\delta^k f.$ 
\end{remark}

\begin{proof}[Proof of Proposition \ref{lemgamma}]  If  no confusion occurs, we will write  in the proof  $H=H_\delta$ for short, omitting the subscript $\delta.$   To simplify the notations,  we denote by  $C$  some generic constants, that may vary from line to line and depend only on  the number   $ C_0$ in   \eqref{trin}.   Note these generic constants $C$ as below are independent of   $k$.

 In view of \eqref{trb}, it follows from
  Leibniz formula that
\begin{equation*}
	H^{k}{\Gamma}(f,f)=\sum\limits_{j=0}^{ k }\sum\limits_{p=0}^{  j }\binom{k}{j}\binom{j}{p}\mathcal{T}(H^{k-j}f, H^{j-p}f, H^{p}\mu^{1/2}).
\end{equation*}
As a result, taking the partial Fourier transform for $x$ variable on both sides  and using the  notation \eqref{Foutri}, we conclude
\begin{equation}\label{uppboun}
\int_{\mathbb{Z}^3}\left(\int_{0}^{T}\big|\big(\mathcal{F}_x(H^{k}\Gamma(f,f)), \widehat{H^{k}f}\big)_{L^2_v}\big|dt\right)^{1/2}d\Sigma(m) \leq \mathcal{J}_1+\mathcal{J}_2+\mathcal{J}_3,
\end{equation}
with
\begin{equation}\label{J1j2j3}
\left\{
\begin{aligned}
	\mathcal{J}_1&=\int_{\mathbb{Z}^3}\bigg[\sum\limits_{0 \leq p \leq k }\binom{k}{p}\int_{0}^{T}\big|\big (\hat{\mathcal{T}}(\hat{f},\ \widehat{H^{k-p}f},\ H^{p}\mu^{1/2}), \ \widehat{H^{k}f}\big)_{L^2_v}\big|dt\bigg]^{1\over2}d\Sigma(m),\\
	\mathcal{J}_2&=\int_{\mathbb{Z}^3} \bigg[\sum\limits_{j=1}^{k-1}\sum\limits_{p=0}^j\binom{k}{j}\binom{j}{p}\int_{0}^{T}\big|\big(\hat{\mathcal{T}}(\widehat{H^{k-j}f}, \widehat{H^{j-p}f}, H^{p}\mu^{1\over2}), \widehat{H^{k}f}\big)_{L^2_v}\big|dt\bigg]^{1\over2}d\Sigma(m),\\
	\mathcal{J}_3&=\int_{\mathbb{Z}^3}\left(\int_{0}^{T}\big|\big (\hat{\mathcal T}(\widehat{H^{k}f},\ \hat{f}, \ \mu^{1/2}),\  \widehat{H^{k}f}\big)_{L^2_v}\big|dt\right)^{1/2}d\Sigma(m).
\end{aligned}
\right.
\end{equation}
We proceed to estimate $ \mathcal J_1,\mathcal J_2,$ and $\mathcal J_3$ as follows.

 \smallskip
\noindent\underline{\it Estimate on $\mathcal J_1$}.
We  first control the  term   $\mathcal{J}_1$
by dividing it  into two terms. That is
\begin{equation}\label{j1}
\begin{aligned}
\mathcal{J}_1 & \leq   \int_{\mathbb{Z}^3}\left(\int_{0}^{T}\big|\big(\hat{\mathcal T}(\hat{f}, \widehat{H^{k}f}, \mu^{1/2}), \widehat{H^{k}f}\big)_{L^2_v}\big|dt\right)^{1\over 2}d\Sigma(m)
\\ & \quad +
\int_{\mathbb{Z}^3}\bigg[\sum\limits_{1 \leq p \leq k }\binom{k}{p}\int_{0}^{T}\big|\big(\hat{\mathcal{T}}(\hat{f}, \widehat{H^{k-p}f}, H^{p}\mu^{1/2}), \widehat{H^{k}f}\big)_{L^2_v}\big|dt\bigg]^{1\over2}d\Sigma(m)\\
&:=\mathcal J_{1,1}+\mathcal J_{1,2}.
\end{aligned}
\end{equation}
Direct verification shows that
\begin{equation}\label{mu}
\forall \ p\geq 0,~ \forall\ t\in[0,T], \quad \big|H^{p}  \mu^{1/2}\big|=\big|t^{\delta p}\partial_{v_1}^{p}  \mu^{1/2}\big| \leq
(2T^\delta)^pp!  \mu^{1/4}.
\end{equation}
Then we apply \eqref{ketres} with $\omega=H^p\mu^{1/2}$ to control $\mathcal J_{1,2}$ in \eqref{j1}  as follows: for any $\eps>0,$
\begin{equation}\label{tecest1}
	\begin{aligned}
		&\mathcal J_{1,2} =\int_{\mathbb{Z}^3}\bigg[\sum\limits_{1 \leq p \leq k }\binom{k}{p}\int_{0}^{T}\big|\big(\hat{\mathcal{T}}(\hat{f}, \widehat{H^{k-p}f}, H^{p}\mu^{1/2}), \widehat{H^{k}f}\big)_{L^2_v}\big|dt\bigg]^{1\over2}d\Sigma(m)\\
		&\leq C \int_{\mathbb{Z}^3}\bigg[\sum\limits_{p=1}^k\binom{k}{p}(2T^\delta)^pp! \\
		&\qquad\qquad \quad \times\int_{0}^{T} \bigg(
		 \int_{\mathbb Z_\ell^3 } \norm{\hat f(m-\ell)}_{L^2_v}     \normm{\widehat{H^{k-p}f}(\ell)} d\Sigma(\ell)\bigg)  \normm{\widehat{H^{k}f}  (m)}     dt\bigg]^{1\over2}d\Sigma(m)\\
		&\leq  \eps   \int_{\mathbb{Z}^3}\bigg[ \int_{0}^{T} \normm{\widehat{H^{k}f}  (t,m)}^2     dt \bigg]^{1\over2}d\Sigma(m)\\
		&+\frac{C}{ \eps}  \int_{\mathbb{Z}^3} \bigg[\int_{0}^{T} \bigg\{ \sum\limits_{p=1}^k\binom{k}{p}(2T^\delta)^pp! \int_{\mathbb Z^3 } \norm{\hat f(m-\ell)}_{L^2_v}     \normm{\widehat{H^{k-p}f}(\ell)}   d\Sigma(\ell) \bigg\}^2dt\bigg]^{1\over2}d\Sigma(m).
	\end{aligned}
\end{equation}
Moreover, in order to treat the last term in \eqref{tecest1} we apply \eqref{MF}  to get
\begin{equation}\label{tecest2}
	\begin{aligned}
		&\int_{\mathbb{Z}^3}\bigg[\int_{0}^{T} \bigg\{ \sum\limits_{p=1}^k\binom{k}{p}(2T^\delta)^{p}p!\int_{\mathbb Z_\ell^3 } \norm{\hat f(m-\ell)}_{L^2_v}     \normm{\widehat{H^{k-p}f}(\ell)}   d\Sigma(\ell) \bigg\}^2dt\bigg]^{1\over2}d\Sigma(m)\\
	&	\leq C\sum\limits_{p=1}^k\binom{k}{p}(2T^\delta)^{p}p!
	 \int_{\mathbb{Z}^3} \sup\limits_{0 < t \leq T}\norm{\hat{f}(t,m)}_{L^2_v}d\Sigma(m) \\
	&\qquad\qquad\qquad\qquad\qquad\qquad\qquad\qquad\times \int_{\mathbb{Z}^3}\Big[\int_{0}^{T}\normm{\widehat{H^{k-p}f}(t,m)} ^{2}dt \Big]^{1\over2}d\Sigma(m)\\
	&	\leq C  \eps_0^2   \sum\limits_{p=1}^k \frac{k! }{(k-p)!} (2T^\delta)^{p}  \frac{C_*^{k -p }[(k-p)!]^\tau }{(k-p+1)^2}\leq C  \eps_0^2C_*^{ k } (k!)^\tau \sum\limits_{p=1}^k   \frac{2^{-p}}{(k-p+1)^2},
	\end{aligned}
\end{equation}
where in the last line we used   condition  \eqref{suppose} as well as the fact that $	C_*>4T^\delta$.  For the last term in \eqref{tecest2} we  have,  denoting by $[k/2]$ the largest integer less than or equal to $k/2$,
\begin{equation}\label{teccom}
\begin{aligned}
	 & \sum\limits_{p=1}^k   \frac{2^{ -p }}{(k-p+1)^2}   \leq        \sum\limits_{p=1}^{[k/2] }\frac{1}{(k-p+1)^2}2^{-p}+\sum\limits_{p=[k/2]+1 }^{   k }\frac{1}{(k-p+1)^2}2^{-p}   \\
&\leq   C    \bigg\{\sum\limits_{p=1}^{ [k/2] }\frac{1}{(k+1)^2}2^{-p} +\sum\limits_{p=[k/2]+1}^{   k}\frac{1}{(k+1)^2}(k+1)^22^{-p} \bigg\} \leq     \frac{C}{(k+1)^2},	 \end{aligned}
\end{equation}
the last inequality using the fact  that
\begin{equation*}
\sum\limits_{p=[k/2]+1}^{ k }(k+1)^22^{-p} \leq\sum\limits_{p=[k/2]+1}^{ k }(k+1)^22^{-\frac{k}{2}}   \leq (k+1)^3 2^{-\frac{k}{2}}   \leq C.
\end{equation*}
As a result, we substitute \eqref{teccom} into  \eqref{tecest2} to conclude that
\begin{align*}
	&\int_{\mathbb{Z}^3}\bigg[\int_{0}^{T} \bigg\{ \sum\limits_{p=1}^k\binom{k}{p}(2T^\delta)^{p}p!\int_{\mathbb Z_\ell^3 } \norm{\hat f(m-\ell)}_{L^2_v}     \normm{\widehat{H^{k-p}f}(\ell)}   d\Sigma(\ell) \bigg\}^2dt\bigg]^{1\over2}d\Sigma(m)\\
	&\leq  C  \eps_0^2\frac{C_*^{ k } (k!)^\tau}{(k+1)^2},
\end{align*}
 which  with \eqref{tecest1}  yields that
\begin{equation}\label{j12}
	\mathcal J_{1,2}\leq  \eps   \int_{\mathbb{Z}^3}\bigg[ \int_{0}^{T} \normm{\widehat{H^{k}f}  (t,m)}^2     dt \bigg]^{1\over2}d\Sigma(m)+C\eps^{-1} \eps_0^2\frac{C_{*}^{k}(k!)^\tau}{(k+1)^2}.
\end{equation}
Moreover, following a similar  argument as above with slight modification,    we conclude that
 \begin{equation*}
 \begin{aligned}
 	&\mathcal J_{1,1}=\int_{\mathbb{Z}^3}\left(\int_{0}^{T}\big|\big(\hat{\mathcal T}(\hat{f}, \widehat{H^{k}f}, \mu^{1/2}), \widehat{H^{k}f}\big)_{L^2_v}\big|dt\right)^{1\over 2}d\Sigma(m)\\
 	&\leq  \eps    \int_{\mathbb{Z}^3}\bigg[ \int_{0}^{T} \normm{\widehat{H^{k}f}  (t,m)}^2     dt \bigg]^{1\over2}d\Sigma(m)\\
 	&\quad+C\eps^{-1}   \Big(\int_{\mathbb Z^3} \sup\limits_{0 < t \leq T}\norm{\hat{f}(t,m)}_{L^2_v}d\Sigma(m)\Big)\int_{\mathbb{Z}^3}\Big[\int_{0}^{T}\normm{\widehat{H^{k}f}(t,m)} ^{2}dt \Big]^{1\over2}d\Sigma(m) \\
 	&\leq \inner{ \eps +C\eps^{-1}\eps_0}   \int_{\mathbb{Z}^3}\bigg[ \int_{0}^{T} \normm{\widehat{H^{k}f}  (t,m)}^2     dt \bigg]^{1\over2}d\Sigma(m).
 	\end{aligned}
 \end{equation*}
 Here we used assumption  \eqref{apbound} to ensure the right-hand side is finite. 
Substitute the above estimate and \eqref{j12} into \eqref{j1} yields  that, for any $\eps>0$,
\begin{equation}\label{uppj1}
	\mathcal J_1\leq  \inner{ \eps +C\eps^{-1}\eps_0}   \int_{\mathbb{Z}^3}\bigg[ \int_{0}^{T} \normm{\widehat{H^{k}f}  (t,m)}^2     dt \bigg]^{1\over2}d\Sigma(m)+  C\eps^{-1}\eps_0^2\frac{C_{*}^{k}(k!)^\tau}{(k+1)^2}.
\end{equation}

 \smallskip
\noindent
\underline{\it Estimate on $\mathcal J_2$}.  Recall  $\mathcal{J}_2$ is given in \eqref{J1j2j3}.  Following a similar argument as that in \eqref{tecest1} and \eqref{tecest2} yields that, for any $\eps>0,$
\begin{equation}\label{j2+}
\begin{aligned}
\mathcal{J}_2 \leq& \eps\int_{\mathbb{Z}^3}\left(\int_{0}^{T}\normm{ \widehat{H^{k}f}(t,m)}^{2}dt\right)^{1/2}d\Sigma(m)\\&+
  C\eps^{-1} \sum\limits_{j=1}^{  k-1 }\sum\limits_{p=0}^{  j }\binom{k}{j}\binom{j}{p}(2T^\delta)^p p!\int_{\mathbb{Z}^3}\sup\limits_{0 < t \leq T} \norm{\widehat{H^{k-j}f}(t,m)}_{L^2_v}d\Sigma(m)\\&\qquad\qquad\qquad\qquad\quad\qquad\qquad\quad\times\int_{\mathbb{Z}^3}\left(\int_{0}^{T}\normm{ \widehat{H^{j-p}f}(t,m)}^{2}dt\right)^{1\over2}d\Sigma(m).
	\end{aligned}
\end{equation}
Moreover, we use  assumption \eqref{suppose} and then repeat
  the computation in \eqref{teccom}, to conclude that,  for any $1\leq j \leq k-1$,
\begin{multline*}
		 \sum\limits_{p=0}^{ j }\binom{j}{p}(2T^\delta)^p p!\int_{\mathbb{Z}^3}\left(\int_{0}^{T}\normm{\widehat{H^{j-p}f}(t,m)}^{2}dt\right)^{1/2}d\Sigma(m)\\
		  \leq  \eps_0 C_*^{j} (j!)^\tau \sum\limits_{p=0}^{ j }  \frac{2^{-p} }{(j-p+1)^2}
		\leq  C\eps_0\frac{C_{*}^{j}(j!)^\tau }{(j+1)^2}.
	\end{multline*}
Substituting the above estimate into the last term on the right-hand side of \eqref{j2+} and using again   condition \eqref{suppose}, we compute
\begin{equation*}
	\begin{aligned}
		&\sum\limits_{j=1}^{  k-1 }\sum\limits_{p=0}^{  j }\binom{k}{j}\binom{j}{p}{(2T^\delta)}^p p!\int_{\mathbb{Z}^3}\sup\limits_{0 < t \leq T} \norm{\widehat{H^{k-j}f}(t,m)}_{L^2_v}d\Sigma(m)\\&\qquad\qquad\qquad\qquad\quad\qquad\qquad\quad\times\int_{\mathbb{Z}^3}\left(\int_{0}^{T}\normm{ \widehat{H^{j-p}f}(t,m)}^{2}dt\right)^{1\over2}d\Sigma(m)\\
		&\leq C\eps_0 \sum\limits_{j=1}^{  k-1 }  \frac{k!}{j!(k-j)!} \frac{C_{*}^{j}(j!)^\tau }{(j+1)^2}\int_{\mathbb{Z}^3}\sup\limits_{0 < t \leq T} \norm{\widehat{H^{k-j}f}(t,m)}_{L^2_v}d\Sigma(m)  \\
		& \leq   C\eps_0^2 \sum\limits_{j=1}^{  k-1 }\frac{k!}{j!(k-j)!}\frac{C_{*}^{j}(j!)^\tau}{(j+1)^2} \frac{C_{*}^{k-j}[(k-j)!]^\tau}{(k-j+1)^2}\\
		&\leq    C\eps_0^2 C_{*}^{k}  \sum\limits_{j=1}^{  k-1 } \frac{k! (j!)^{\tau-1}[(k-j )!]^{\tau-1}}{(k-j+1)^2 (j+1)^2}
		\leq   C\eps_0^2 \frac{C_{*}^{k}(k!)^\tau }{(k+1)^2} ,
	\end{aligned}
\end{equation*}
the last inequality using the facts that $p!q!\leq (p+q)!$ and that $\tau\geq 1$.
This, together  with \eqref{j2+},  yields
\begin{equation}\label{uppj2}
\mathcal{J}_2 \leq  \eps \int_{\mathbb{Z}^3}\left(\int_{0}^{T}\normm{\widehat{H^{k}f}(t,m)}^{2}dt\right)^{1/2}d\Sigma(m)+	C \eps^{-1} \eps_0^2\frac{C_{*}^{k} (k!)^\tau}{(k+1)^2}.
\end{equation}

\smallskip
\noindent
\underline{\it Estimate on  $\mathcal J_3$}. It remains to deal with $\mathcal J_3$,  recalling $\mathcal{J}_3$ is given in \eqref{J1j2j3}.  We repeat the computation in \eqref{tecest1} and \eqref{tecest2} to conclude that
\begin{equation*}
\begin{aligned}
& \mathcal{J}_3  =\int_{\mathbb{Z}^3}\left(\int_{0}^{T}\big|\big (\hat{\mathcal T}(\widehat{H^{k}f},\ \hat{f}, \ \mu^{1/2}), \widehat{H^{k}f}\big)_{L^2_v}\big|dt\right)^{1/2}d\Sigma(m)\\
& \leq   \eps   \int_{\mathbb{Z}^3}\bigg[ \int_{0}^{T} \normm{\widehat{H^{k}f}  (t,m)}^2     dt \bigg]^{1\over2}d\Sigma(m)
		\\
		& \quad  +C\eps^{-1}\Big( \int_{\mathbb{Z}^3} \sup\limits_{0 < t \leq T}\norm{\widehat{H^{k}f}(t,m)}_{L^2_v}d\Sigma(m)\Big)  \int_{\mathbb{Z}^3}\Big[\int_{0}^{T}\normm{\hat f(t,m)} ^{2}dt \Big]^{1\over2}d\Sigma(m)\\
& \leq   \eps   \int_{\mathbb{Z}^3}\bigg[ \int_{0}^{T} \normm{\widehat{H^{k}f}  (t,m)}^2     dt \bigg]^{1\over2}d\Sigma(m)
		   +C \eps^{-1} \eps_0 \int_{\mathbb{Z}^3} \sup\limits_{0 < t \leq T}\norm{\widehat{H^{k}f}(t,m)}_{L^2_v}d\Sigma(m).
\end{aligned}
\end{equation*}
Combining the  upper bound of $\mathcal J_3$ above  and   estimates \eqref{uppj1} and \eqref{uppj2} with \eqref{uppboun}, we obtain the assertion in Proposition \ref{lemgamma}. The proof is completed.
 \end{proof}

\begin{proposition}\label{lem:com}
 Under  the same assumption as in   Proposition \ref{lemgamma}, we  can find  a constant $C$,  depending only on $T,\delta$ and the number  $ C_0$ in   \eqref{trisole} and \eqref{tretmate} but independent of $k$,  such that for any $\eps >0$,
	\begin{equation*}
	\begin{aligned}
	&\int_{\mathbb{Z}^3}\left(\int_{0}^{T}\big|\big(\mathcal{F}_x([H_\delta^{k},\  \mathcal{L}]f), \ \widehat{H_\delta^{k}f}\big)_{L^2_v}\big|dt\right)^{1/2}d\Sigma(m)
	\\ & \leq   \eps\int_{\mathbb{Z}^3}\left(\int_{0}^{T}\normm{ \widehat{H_\delta^{k}f}(t,m)}^{2}dt\right)^{1/2}d\Sigma(m)+C{\eps}^{-1}\frac{\eps_0C_{*}^{k-1}(k!)^\tau}{(k+1)^2}.
	\end{aligned}
	\end{equation*}
\end{proposition}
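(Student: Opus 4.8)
The plan is to express $\mathcal L$ through the trilinear form $\mathcal T$ of \eqref{matht}, expand $H_\delta^k\mathcal Lf$ by Leibniz's formula, and observe that every term that survives in the commutator is the product of a fixed Gaussian factor with a \emph{low-order} directional derivative $H_\delta^p f$, $p\le k-1$; the trilinear bounds \eqref{trisole}--\eqref{tretmate} together with the inductive hypothesis \eqref{suppose} then close the estimate, much as in Proposition \ref{lemgamma} but in fact more easily, since no convolution over $\mathbb Z^3$ occurs. Concretely, combining \eqref{linearbol}, \eqref{trb} and $\mu=\mu^{1/2}\mu^{1/2}$ gives
\[
	\mathcal Lf=-\mathcal T(\mu^{1/2},\,f,\,\mu^{1/2})-\mathcal T(f,\,\mu^{1/2},\,\mu^{1/2}),
\]
where, as always, the a priori bound \eqref{apbound} (in force by assumption) makes all the ensuing manipulations rigorous.

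Since the cross-section $B$ in \eqref{matht} is independent of $x$ and, after the change of variables $v_*\mapsto v-u$, the variable $v$ enters the integrand only through translations of the arguments, the operator $H_\delta=\frac1{\delta+1}t^{\delta+1}\partial_{x_1}+t^\delta\partial_{v_1}$ acts as a derivation in each of the three slots of $\mathcal T$; this is exactly the mechanism used to expand $H_\delta^k\Gamma(f,f)$ in the proof of Proposition \ref{lemgamma}. Iterating, and using $H_\delta^m\mu^{1/2}=t^{m\delta}\partial_{v_1}^m\mu^{1/2}$ (because $\mu$ is a function of $v$ alone), one gets the multinomial expansion of $H_\delta^k\mathcal Lf$; subtracting the single contribution in which all $k$ copies of $H_\delta$ land on the $f$-slot, namely $\mathcal L(H_\delta^k f)$, leaves
\[
\begin{aligned}
	[H_\delta^k,\mathcal L]f=-\sum_{\substack{j+p+q=k\\ j+q\ge1}}\frac{k!}{j!\,p!\,q!}\Big(&\mathcal T\big(t^{j\delta}\partial_{v_1}^j\mu^{1/2},\,H_\delta^p f,\,t^{q\delta}\partial_{v_1}^q\mu^{1/2}\big)\\
	&{}+\mathcal T\big(H_\delta^p f,\,t^{j\delta}\partial_{v_1}^j\mu^{1/2},\,t^{q\delta}\partial_{v_1}^q\mu^{1/2}\big)\Big).
\end{aligned}
\]
The decisive point is that $j+q\ge1$ forces $p\le k-1$, so only low-order directional derivatives of $f$ occur, to which \eqref{suppose} applies.

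Next I would take the partial Fourier transform in $x$ and pair against $\widehat{H_\delta^k f}(m)$. As the two Gaussian factors in each term are functions of $v$ only, their $x$-Fourier transforms are supported at the origin and no convolution over $\mathbb Z^3$ is generated, so \eqref{trisole} (applied to the second family, where $H_\delta^p f$ occupies the first slot) and \eqref{tretmate} (applied to the first family, where $H_\delta^p f$ occupies the second slot) may be invoked directly at each frequency $m$, with $g=\mu^{1/2}$ and $\omega=t^{q\delta}\partial_{v_1}^q\mu^{1/2}$. The bound $|t^{q\delta}\partial_{v_1}^q\mu^{1/2}|\le(2T^\delta)^q q!\,\mu^{1/4}$ from \eqref{mu} verifies \eqref{contild}, while a routine Hermite-type estimate shows that $\mu^{1/2}$ satisfies \eqref{contild+++} with $L_j$ of the form $(CT^\delta)^j j!$; after factoring the time powers $t^{j\delta},t^{q\delta}$ out of $\mathcal T$ by linearity, the factorials $j!\,q!$ cancel against the multinomial coefficient, and one is left, for each $(t,m)$, with
\[
	\big|\big(\mathcal F_x([H_\delta^k,\mathcal L]f),\ \widehat{H_\delta^k f}\big)_{L^2_v}\big|\le C\!\!\sum_{\substack{j+p+q=k\\ j+q\ge1}}\!\!\frac{k!}{p!}\,(CT^\delta)^{j+q}\Big(\normm{\widehat{H_\delta^p f}(m)}+\norm{\widehat{H_\delta^p f}(m)}_{L^2_v}\Big)\normm{\widehat{H_\delta^k f}(m)}.
\]
Cauchy--Schwarz in $t$, the elementary inequality $\sqrt{ab}\le\tfrac{\eps}{2}a+\tfrac1{2\eps}b$, Minkowski's inequality in $m$, and finally \eqref{suppose} (used in its $\normm{\cdot}$ form for the first family and in its $\sup_t\norm{\cdot}_{L^2_v}$ form for the second, the latter at the harmless cost of a factor $T^{1/2}$) turn the factor $\normm{\widehat{H_\delta^k f}}$ into $\eps\int_{\mathbb Z^3}\big(\int_0^T\normm{\widehat{H_\delta^k f}(t,m)}^2dt\big)^{1/2}d\Sigma(m)$ and bound the remainder by $C\eps^{-1}\eps_0$ times $\sum_{p=0}^{k-1}(k-p+1)(CT^\delta)^{k-p}\frac{k!\,(p!)^{\tau-1}C_*^p}{(p+1)^2}$.

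It remains to close this numerical series, and this is the step I expect to be the main obstacle: one must balance the multinomial coefficients against the factorially growing Hermite constants of $\partial_{v_1}^j\mu^{1/2}$, absorb the $(p!)^{\tau-1}$ losses using $p!\le k!$ and $\tau\ge1$, and exploit the weights $(p+1)^{-2}$ from \eqref{suppose} together with a gain of one power of the coefficient that comes precisely from $j+q\ge1$. Indeed, since $k-p=j+q\ge1$, one factor $CT^\delta$ can be peeled off and absorbed into a constant depending only on $T,\delta$ and the number $C_0$ of \eqref{trisole}--\eqref{tretmate}; the surviving $(CT^\delta)^{k-p-1}$ is then dominated by $(C_*/2)^{k-p-1}$ thanks to $C_*\ge4T^\delta$ (enlarging the generic constant if necessary, exactly as in \eqref{tecest2}), so that with $(p!)^{\tau-1}\le(k!)^{\tau-1}$ and $C_*^p\le C_*^{k-1}$ the series reduces to $C\,C_*^{k-1}(k!)^\tau\sum_{p\le k-1}\frac{k-p+1}{(p+1)^2}2^{-(k-p)}$, which by the dyadic splitting carried out in \eqref{teccom} (the extra factor $k-p+1$ being harmless) is $\le C(k+1)^{-2}C_*^{k-1}(k!)^\tau$. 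This yields the asserted estimate, with the sharp weight $(k+1)^{-2}$ and exactly $C_*^{k-1}$ rather than $C_*^k$.
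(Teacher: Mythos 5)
Your proposal is correct and follows essentially the same route as the paper's proof: writing $\mathcal L f=-\mathcal T(\mu^{1/2},f,\mu^{1/2})-\mathcal T(f,\mu^{1/2},\mu^{1/2})$, expanding $[H_\delta^k,\mathcal L]f$ by Leibniz so that only $H_\delta^pf$ with $p\le k-1$ appears, invoking \eqref{trisole} and \eqref{tretmate} with the Gaussian bounds \eqref{mu} and \eqref{contild+++}, splitting with $\eps$ after Cauchy--Schwarz in $t$ and Minkowski in $m$, and closing the combinatorial series via the inductive hypothesis \eqref{suppose}, $C_*\ge 4T^\delta$, and the dyadic splitting of \eqref{teccom}, which is exactly the paper's argument (there organized as $R_1(f)+R_2(f)$). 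The only differences are cosmetic: you use a multinomial indexing instead of the paper's nested binomial sums, and you carry a slightly loose generic constant $(CT^\delta)^{j+q}$ where the paper keeps the explicit $(2T^\delta)$-powers, but the substance and the final $C_*^{k-1}(k!)^\tau/(k+1)^2$ bound coincide.
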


\begin{proof}
This is just a specific case of Proposition \ref{lemgamma}.
	Recall $\mathcal{L} $ is defined in \eqref{linearbol}, that is,
	\begin{eqnarray*}
	\begin{aligned}
	\mathcal L f&=-\Gamma( \mu^{1/2},f)-\Gamma(f,\mu^{1/2}) =-\mathcal T(\mu^{1/2}, f, \mu^{1/2})-\mathcal T(f,\mu^{1/2}, \mu^{1/2}).		\end{aligned}
	\end{eqnarray*}
 Then using again Leibniz's formula gives  that, denoting $H=H_\delta$,
 \begin{equation}\label{lastlabe}
\begin{aligned}
[H^{k},\ \mathcal{L}  ]f=&-\sum\limits_{j=1}^{ k }\sum\limits_{p=0}^{ j } \binom{k}{j} \binom{j}{p}\mathcal{T}(H^{j-p}\mu^{1/2}, H^{k-j}f, H^{p}\mu^{1/2})\\
&\qquad-\sum\limits_{j=1}^{  k }\sum\limits_{p=0}^{ j } \binom{k}{j} \binom{j}{p}\mathcal{T}(H^{k-j}f, H^{j-p}\mu^{1/2}, H^{p}\mu^{1/2})\\
 \stackrel{\rm def}{=}& R_1(f)+R_2(f).
   \end{aligned}
   \end{equation}
Moreover,  we may write,   as in \eqref{uppboun}, 
\begin{equation}\label{r2est}
\begin{aligned}
&	\int_{\mathbb{Z}^3}\left(\int_{0}^{T}\big|\big(\mathcal{F}_x\big (R_2(f)\big ), \widehat{H^{k}f}\big)_{L^2_v}\big|dt\right)^{1/2}d\Sigma(m)\\
& \leq \int_{\mathbb{Z}^3} \bigg[\sum\limits_{j=1}^{k}\sum\limits_{p=0}^j\binom{k}{j}\binom{j}{p}\int_{0}^{T}\big|\big(\hat{\mathcal{T}}(\widehat{H^{k-j}f},  H^{j-p}\mu^{1\over2}, H^{p}\mu^{1\over2}), \widehat{H^{k}f}\big)_{L^2_v}\big|dt\bigg]^{1\over2}d\Sigma(m).
\end{aligned}
\end{equation}
 By direct verification, it follows  that, for any $ p\geq 0$ and any $   t\in[0,T]$, 
\begin{multline*}
\quad \big| (1+|v|^{2+\gamma}-\Delta_v) H^{p}  \mu^{1/2}\big|=\big|(1+|v|^{2+\gamma}-\Delta_v) t^{\delta p}\partial_{v_1}^{p}  \mu^{1/2}\big| \leq C
(2T^\delta)^pp!  \mu^{1/8}.
\end{multline*}
This, with \eqref{mu}, enables to use  \eqref{trisole}  with $g=\mu^{1/2}$ and $\omega=H^{p}\mu^{1/2}$, to compute
\begin{align*}
	&\sum\limits_{j=1}^{k}\sum\limits_{p=0}^j\binom{k}{j}\binom{j}{p} \big|\big(\hat{\mathcal{T}}(\widehat{H^{k-j}f},  H^{j-p}\mu^{1\over2}, H^{p}\mu^{1\over2}), \widehat{H^{k}f}\big)_{L^2_v}\big| \\
	&\leq C \sum \limits_{j=1}^{k}\sum\limits_{p=0}^j\binom{k}{j}\binom{j}{p}(2T^\delta)^pp! \times\big[ (2T^\delta)^{j-p}(j-p)! \big]  		 \norm{\widehat{H^{k-j}f}(m)}_{L^2_v}        \normm{\widehat{H^{k}f}  (m)}  \\
	&\leq  C\sum \limits_{j=1}^{k}\frac{k!}{(k-j)!} (j+1)(2T^\delta)^j \norm{\widehat{H^{k-j}f}(m)}_{L^2_v}        \normm{\widehat{H^{k}f}  (m)}.    
	\end{align*}
Thus,  for any $\eps>0,$
\begin{equation*}
		\begin{aligned}
		&\int_{\mathbb{Z}^3} \bigg[\sum\limits_{j=1}^{k}\sum\limits_{p=0}^j\binom{k}{j}\binom{j}{p}\int_{0}^{T}\big|\big(\hat{\mathcal{T}}(\widehat{H^{k-j}f},  H^{j-p}\mu^{1\over2}, H^{p}\mu^{1\over2}), \widehat{H^{k}f}\big)_{L^2_v}\big|dt\bigg]^{1\over2}d\Sigma(m)\\
		&\leq C \int_{\mathbb{Z}^3}\bigg[  \int_{0}^{T} 		    \sum \limits_{j=1}^{k}\frac{k!}{(k-j)!} (j+1)(2T^\delta)^j \norm{\widehat{H^{k-j}f}(m)}_{L^2_v}        \normm{\widehat{H^{k}f}  (m)}     dt\bigg]^{1\over2}d\Sigma(m)\\
		&\leq  \eps   \int_{\mathbb{Z}^3}\bigg[ \int_{0}^{T} \normm{\widehat{H^{k}f}  (t,m)}^2     dt \bigg]^{1\over2}d\Sigma(m)\\
		&\quad+\frac{C}{ \eps}  \int_{\mathbb{Z}^3} \bigg[\int_{0}^{T} \bigg\{  \sum \limits_{j=1}^{k}\frac{k!}{(k-j)!} (j+1)(2T^\delta)^j \norm{\widehat{H^{k-j}f}(m)}_{L^2_v}       \bigg\}^2dt\bigg]^{1\over2}d\Sigma(m).
	\end{aligned}
\end{equation*}
As for the last term, we use  triangle inequality for  norms to get, recalling $C_*>4T^\delta$,
\begin{align*}
	& \int_{\mathbb{Z}^3} \bigg[\int_{0}^{T} \bigg\{  \sum \limits_{j=1}^{k}\frac{k!}{(k-j)!} (j+1)(2T^\delta)^j \norm{\widehat{H^{k-j}f}(m)}_{L^2_v}       \bigg\}^2dt\bigg]^{1\over2}d\Sigma(m)\\
	&\leq  \int_{\mathbb{Z}^3}  \sum \limits_{j=1}^{k}\frac{k!}{(k-j)!} (j+1)(2T^\delta)^j   \bigg[\int_{0}^{T}  \norm{\widehat{H^{k-j}f}(m)}_{L^2_v}     ^2dt\bigg]^{1\over2}d\Sigma(m)\\
	&\leq  T^{1/2}  \sum \limits_{j=1}^{k}\frac{k!}{(k-j)!} (j+1)(2T^\delta)^j    \int_{\mathbb{Z}^3}    \sup_{0<t\leq T}  \norm{\widehat{H^{k-j}f}(t,m)}_{L^2_v}      d\Sigma(m)\\
	&\leq 2 \eps_0 T^{1/2}   T^\delta \sum \limits_{j=1}^{k}\frac{k!}{(k-j)!} (j+1)(2T^\delta)^{j-1}   \frac{C_*^{k-j}[(k-j)!]^\tau}{(k-j+1)^2} \leq C  \eps_0\frac{C_*^{ k-1 } (k!)^\tau}{(k+1)^2},
\end{align*}
the last line using   inductive assumption \eqref{suppose} and the last inequality following from a similar argument as that in  \eqref{tecest2} and \eqref{teccom}.  Combining the above estimates we conclude that
\begin{multline*}
	\int_{\mathbb{Z}^3} \bigg[\sum\limits_{j=1}^{k}\sum\limits_{p=0}^j\binom{k}{j}\binom{j}{p}\int_{0}^{T}\big|\big(\hat{\mathcal{T}}(\widehat{H^{k-j}f},  H^{j-p}\mu^{1\over2}, H^{p}\mu^{1\over2}), \widehat{H^{k}f}\big)_{L^2_v}\big|dt\bigg]^{1\over2}d\Sigma(m)\\
	\leq \eps   \int_{\mathbb{Z}^3}\bigg[ \int_{0}^{T} \normm{\widehat{H^{k}f}  (t,m)}^2     dt \bigg]^{1\over2}d\Sigma(m)+ C{\eps}^{-1}\frac{\eps_0C_{*}^{k-1}(k!)^\tau}{(k+1)^2},
\end{multline*}
which with \eqref{r2est} yields
\begin{multline*}
	\int_{\mathbb{Z}^3}\left(\int_{0}^{T}\big|\big(\mathcal{F}_x\big (R_2(f)\big ), \widehat{H^{k}f}\big)_{L^2_v}\big|dt\right)^{1/2}d\Sigma(m)\\
	\leq \eps   \int_{\mathbb{Z}^3}\bigg[ \int_{0}^{T} \normm{\widehat{H^{k}f}  (t,m)}^2     dt \bigg]^{1\over2}d\Sigma(m)+ C{\eps}^{-1}\frac{\eps_0C_{*}^{k-1}(k!)^\tau}{(k+1)^2}. 
\end{multline*}
Similarly,  using \eqref{tretmate} instead of \eqref{trisole}, we can verify that the above estimate still holds true with $R_2(f)$  replaced by $R_1(f)$.  Thus the assertion in Proposition \ref{lem:com} follows by observing 
\begin{multline*}
	\int_{\mathbb{Z}^3}\left(\int_{0}^{T}\big|\big(\mathcal{F}_x([H_\delta^{k},\  \mathcal{L}]f), \ \widehat{H_\delta^{k}f}\big)_{L^2_v}\big|dt\right)^{1/2}d\Sigma(m)\\\leq \sum_{j=1}^2 \int_{\mathbb{Z}^3}\left(\int_{0}^{T}\big|\big(\mathcal{F}_x\big (R_j(f)\big ), \widehat{H^{k}f}\big)_{L^2_v}\big|dt\right)^{1/2}d\Sigma(m)
\end{multline*}
due to \eqref{lastlabe}.    The proof is thus completed.
\end{proof}

\section{Analytic smoothing effect for strong angular singularity}
\label{sec: strong}
In this section we consider the case when the cross-section has strong angular singularity, that is,  the number  $s$ in \eqref{angu} satisfies that   $1/2\leq s<1$. This will yield the analytic regularity of weak solutions to the Boltzmann equation \eqref{eqforper} at any positive time.

  \subsection{Quantitative estimate for  directional derivatives}
To get the analyticity of solutions at positive times, it relies on a   crucial estimate on the derivatives in the direction $H_\delta$ defined in \eqref{vecM} with $\delta$ therein satisfying condition \eqref{relation}.    In this part, we will perform an energy estimate on the directional derivatives of regular solutions, and the treatment for the classical derivatives will be presented in the next subsection.

 \begin{theorem}\label{theorem} Assume that the cross-section satisfies \eqref{kern} and \eqref{angu} with $   \gamma \geq 0$ and $1/2\leq  s <1$.   Let $T\geq 1$ be arbitrarily given, and
 	 let $f\in L_m^1L_T^\infty L_v^2$  be any solution to the Cauchy problem \eqref{eqforper} satisfying that,
 	for any $N\in\mathbb Z_+$ and any $\beta\in\mathbb Z_+^3,$
 \begin{equation}\label{higher+}
 \begin{aligned}
    &\int_{{\mathbb Z}^3}  \Big( \sup_{0<t\leq T}t^{ \frac{1+2s}{2s}( N+\abs\beta)}   \abs{m}^{N}\norm{ {\partial_v^\beta \hat f (t,m,\cdot)}}_{L^2_v}\Big) d \Sigma(m)  \\
    & \qquad \qquad +\int_{\mathbb Z^3}  \Big(  \int_0^T t^{ \frac{1+2s}{s}(N+\abs\beta) }\abs{m}^{2N}  \normm{ {\partial_v^\beta\hat f (t,m,\cdot )}} ^2dt\Big)^{1/2} d \Sigma(m)
<+\infty.
\end{aligned}
\end{equation}
Moreover, let $H_\delta$ be defined by \eqref{vecM}  with $\delta$ an  arbitrarily given number  satisfying \eqref{relation}.   Then  there exists a sufficiently small constant $\eps_0>0$ and a large constant $L\geq 1$, with $L$ depending only on $T,\delta$,  and the numbers $c_0, C_0$ in Section \ref{sec:prelim},  such that if
\begin{equation}\label{lower++}
 \int_{{\mathbb Z}^3}  \Big( \sup_{0<t\leq T}   \norm{ {\hat f (t, m )}}_{L^2_v}\Big) d \Sigma(m)+\int_{{\mathbb Z}^3}   \Big( \int_0^T  \normm{ \hat f (t,m )} ^2dt \Big)^{\frac{1}{2}} d \Sigma(m) \leq \eps_0,
 \end{equation}
   then
  the  estimate
 	\begin{multline}\label{k}
 	\int_{\mathbb{Z}^3}\sup\limits_{0 < t \leq T}\norm{\widehat{H_\delta^{k}f}(t,m)}_{L^2_v}d\Sigma(m)\\
 	+ \int_{\mathbb{Z}^3}\left(\int_{0}^{T}\normm{  \widehat{H_\delta^{k}f}(t,m)}^{2}dt\right)^{1/2}d\Sigma(m) \leq \frac{\eps_0 L^{k }  k!  }{(k+1)^2}
 	\end{multline}
 	holds true for any $k\in\mathbb Z_+$.
 	Moreover, the above estimate \eqref{k} is still true if we replace $H_\delta$ by
 	$$\frac{1}{1+\delta} t^{\delta+1} \partial_{x_i}+t^\delta \partial_{v_i}$$
 	with  $i=2 $ or $3$.
\end{theorem}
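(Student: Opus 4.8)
The plan is to prove \eqref{k} by induction on $k$, running at each fixed frequency $m\in\mathbb Z^3$ an $L^2_v$ energy estimate for $\widehat{H_\delta^k f}(t,m)$ and then integrating against $d\Sigma(m)$. Write $H=H_\delta$. For $k=0$ the bound \eqref{k} is exactly the smallness hypothesis \eqref{lower++}. So fix $k\ge 1$ and assume \eqref{suppose} holds with $C_*=L$ and $\tau=1$ for all $j\le k-1$. First I would record two preliminary consequences of \eqref{higher+}: since $\partial_{x_1},\partial_{v_1}$ and the time-dependent coefficients in \eqref{vecM} all commute, $H^k f$ is a finite sum of terms $t^{\delta k+j}\partial_{x_1}^{j}\partial_{v_1}^{k-j}f$ with $0\le j\le k$ and, by the strict inequality \eqref{relation}, exponents $\delta k+j>\frac{1+2s}{2s}k$. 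Comparing with the weighted norms in \eqref{higher+} shows, on the one hand, that assumption \eqref{apbound} holds for $H^{k}f$ (so all the manipulations below are rigorous rather than formal) and, on the other hand, that $\widehat{H^{k}f}(0,m)=0$.

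The next step is the evolution equation for $H^{k}f$. Applying $H^{k}$ to \eqref{eqforper} and inserting the commutator identity \eqref{kehigher} gives
\begin{equation*}
(\partial_t+v\cdot\partial_x)(H^{k}f)+\mathcal L (H^{k}f)=H^{k}\Gamma(f,f)-[H^{k},\mathcal L]f+\delta k\,t^{\delta-1}\partial_{v_1}(H^{k-1}f).
\end{equation*}
Taking $\mathcal F_x$, pairing with $\widehat{H^{k}f}(t,m)$ in $L^2_v$ and taking real parts kills the skew-adjoint transport term $i\,m\cdot v$; the coercivity \eqref{rela} converts the linearized-collision term into $c_0\normm{\widehat{H^{k}f}(t,m)}^2$ up to a lower-order $\norm{\widehat{H^{k}f}(t,m)}_{L^2_v}^2$, which is harmless by Gronwall on $[0,T]$ because $\widehat{H^{k}f}(0,m)=0$. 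This yields, for each $m$,
\begin{equation*}
\sup_{0<t\le T}\norm{\widehat{H^{k}f}(t,m)}_{L^2_v}^2+\int_0^{T}\normm{\widehat{H^{k}f}(t,m)}^2\,dt\le C_T\int_0^{T}\big|\big(\mathrm{RHS}(t,m),\ \widehat{H^{k}f}(t,m)\big)_{L^2_v}\big|\,dt,
\end{equation*}
with $C_T$ depending only on $T$ and $c_0$. Taking square roots, integrating over $m$ and using subadditivity of $\sqrt{\cdot}$ splits the right-hand side into three pieces; the first two are precisely the quantities bounded in Proposition \ref{lemgamma} (applied with $C_*=L$, which needs $L\ge 4T^\delta$) and Proposition \ref{lem:com}.

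The hard part is the third piece, the transport commutator $\delta k\,t^{\delta-1}\partial_{v_1}(H^{k-1}f)$, in which one $v$-derivative must be recovered from the diffusion. Here I would use that the strong-singularity restriction $1/2\le s<1$ forces $1-s\le s$, hence $H^{s}_v\hookrightarrow H^{1-s}_v$, to get
\begin{equation*}
\delta k\,t^{\delta-1}\big|\big(\partial_{v_1}\widehat{H^{k-1}f}(m),\ \widehat{H^{k}f}(m)\big)_{L^2_v}\big|\le \delta k\,t^{\delta-1}\norm{\widehat{H^{k-1}f}(m)}_{H^{1-s}_v}\norm{\widehat{H^{k}f}(m)}_{H^{s}_v}\le C\delta k\,t^{\delta-1}\normm{\widehat{H^{k-1}f}(m)}\normm{\widehat{H^{k}f}(m)},
\end{equation*}
the last step by \eqref{+lowoftri}; this is exactly the point where $1/2\le s<1$ is used. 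Young's inequality, Cauchy-Schwarz in $t$ with $t^{\delta-1}\le T^{\delta-1}$, the square root, integration over $m$ and the inductive $L^1_mL^2_T$ bound for $\widehat{H^{k-1}f}$ then control this piece by $\eps\,(\text{target})+C\eps^{-1}\delta T^{\delta-1}\eps_0\,L^{k-1}k!/k^2$, which is acceptable since $(k+1)^2/k^2\le 4$. Collecting the three estimates, I would absorb the small multiples of $\int_{\mathbb Z^3}\sup_t\norm{\widehat{H^{k}f}}_{L^2_v}d\Sigma+\int_{\mathbb Z^3}(\int_0^{T}\normm{\widehat{H^{k}f}}^2)^{1/2}d\Sigma$ into the left side by first choosing $\eps$ small (depending on $C_T$), then $L$ large (depending on $T,\delta,c_0,C_0$, and $\ge 4T^\delta$), and finally $\eps_0$ small, so that the remaining source terms --- each a multiple of $\eps_0^2 L^{k}k!/(k+1)^2$, $\eps_0 L^{k-1}k!/(k+1)^2$ or $\eps_0 L^{k-1}k!/k^2$ --- add up to at most $\eps_0 L^{k}k!/(k+1)^2$, closing the induction. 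The concluding assertion for $i=2,3$ needs no new argument: the commutator identities \eqref{keyob}--\eqref{kehigher}, the Gaussian bounds $|t^{\delta p}\partial_{v_i}^{p}\mu^{1/2}|\le(2T^\delta)^{p}p!\,\mu^{1/4}$ feeding Propositions \ref{lemgamma} and \ref{lem:com}, and the embedding $H^{s}_v\hookrightarrow H^{1-s}_v$ are all symmetric in the three coordinate directions.
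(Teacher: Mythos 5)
Your proposal is correct and follows essentially the same route as the paper's proof: induction on $k$, a per-mode $L^2_v$ energy estimate with the coercivity \eqref{rela}, the transport commutator \eqref{kehigher} recovered via the $H^{1-s}_v$--$H^{s}_v$ pairing and \eqref{+lowoftri} (exactly where $1/2\le s<1$ enters), the collision terms handled by Propositions \ref{lemgamma} and \ref{lem:com} with $C_*=L\ge 4T^\delta$, and the vanishing of $\widehat{H_\delta^k f}$ as $t\to 0$ deduced from \eqref{higher+} together with \eqref{relation}. The only cosmetic difference is that the paper absorbs the commutator term into the coercive part before applying Gronwall rather than after, which changes nothing in substance.
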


\begin{proof}
  To simplify the notations, we will use the capital letter $C$ to denote some generic constants, that may vary from line to line and depend only on $T,\delta,$ and the numbers $c_0,C_0$ in   Section \ref{sec:prelim}.   Note these generic constants $C$ as below are independent of the derivative order denoted by $k$.  If there is no confusion, in the following argument we will write $H=H_\delta$ for short, omitting the subscript $\delta$.

 We  use induction on $k$ to prove  the quantitative estimate \eqref{k}. The validity of \eqref{k} for $k=0$ follows from \eqref{lower++} if we choose $L\geq 1$.  Using the notation $H:=H_\delta$  and   supposing the estimate
\begin{multline}\label{suppose+}
\int_{\mathbb{Z}^3}\sup\limits_{0 < t \leq T}\norm{\widehat{H^{j}f}(t,m)}_{L^2_v}d\Sigma(m)\\+ \int_{\mathbb{Z}^3}\left(\int_{0}^{T}\normm{  \widehat{H^{j}f}(t,m)}^{2}dt\right)^{1/2}d\Sigma(m) \leq \frac{\eps_0L^{j}  j! }{(j+1)^2}
\end{multline}
holds true for any  $ j \leq k-1 $ with given $k\geq 1$, we will prove in the following argument that estimate \eqref{suppose+} still holds true for $j=k$ provided  $L\geq 4T^\delta$.   

To do so  we begin with the claim that  the estimate
\begin{equation}\label{aprik}
  \int_{\mathbb{Z}^3}\sup\limits_{0 < t \leq T}\norm{\widehat{H ^{k}f}(t,m)}_{L^2_v}d\Sigma(m)
  +      \int_{\mathbb{Z}^3}\Big (\int_{0}^{T}\normm{ \widehat{H^{k}f}(t,m)}^{2}dt\Big )^{1\over2}d\Sigma(m)
<+\infty 
\end{equation}
holds true for any $k\in\mathbb Z_+.$  In fact, by  Leibniz's formula we compute,     for any $0<t\leq T$ and for any $m\in \mathbb Z^3$,
\begin{align*}
	\normm{ \widehat{H^{k}f}(t,m)}&\leq C_{\delta,k} \sum_{j\leq k}  t^{(1+\delta)j+\delta (k-j) }\abs{m_1}^{j}\normm{\partial_{v_1}^{k-j}\hat f (t,m)}\\
	& \leq C_{\delta,k} \sum_{j\leq k} t^{   \delta k+j  }\abs{m}^{j}\normm{\partial_{v_1}^{k-j}\hat f (t,m)} \\
	& \leq C_{\delta,k} (1+T) ^{k}   t^{\big (\delta-\frac{1+2s}{2s}\big)k} \sum_{j\leq k} t^{ \frac{1+2s}{2s} k}\abs{m}^{j}\normm{\partial_{v_1}^{k-j}\hat f (t,m)},
\end{align*} 
and similarly, 
\begin{equation}\label{etj}
	\norm{ \widehat{H^{k}f}(t,m)}_{L^2_v} \leq C_{\delta,k} (1+T) ^{k}   t^{\big (\delta-\frac{1+2s}{2s}\big)k} \sum_{j\leq k} t^{ \frac{1+2s}{2s} k}\abs{m}^{j}\norm{\partial_{v_1}^{k-j}\hat f (t,m)}_{L^2_v}, 
\end{equation}  
where  $C_{\delta,k}$ is a constant depending only on $k$ and $\delta$. Then assertion \eqref{aprik} follows   from assumption  \eqref{higher+} by observing the fact that  $\delta>\frac{1+2s}{2s}.$ 

 {\it Step 1).}
Applying $H^{k}$ to   equation    \eqref{eqforper} yields
	\begin{equation*}
	\begin{aligned}
	\inner{\partial_{t} +v\cdot\partial_{x}  +\mathcal{L}} H^{k}  f &= -[H^{k}, \partial_{t}+v\cdot\partial_{x}] f - [ H^{k}, \ \mathcal{L}] f + H^{k}\Gamma(f, f)\\
	&= \delta kt^{\delta-1}\partial_{v_1} H^{k-1}f - [ H^{k}, \ \mathcal{L}] f + H^{k}\Gamma(f, f),
	\end{aligned}
	\end{equation*}
	the last equality using \eqref{kehigher}.  Furthermore, we perform
partial  Fourier transform  in $x$ and then consider the real part after taking the  inner product of  $L_v^2$ with $ \widehat{H^{k} f}$, to obtain
\begin{equation*}
\begin{aligned}
 & \frac{1}{2}\frac{d}{dt}\norm{\widehat{H^{k}f}}_{L^2_v}^2 +\big(\mathcal{L}\widehat{H^{k}f},\  \widehat{H^{k}f}\big)_{L^2_v}
  \\ &\leq  \delta k  t^{\delta-1} \big|\big( \partial_{v_1} \widehat{H^{k-1}f},\  \widehat{H^{k}f}\big)_{L^2_v}\big|+ \big|\big(\mathcal{F}_x([ H^{k},\ \mathcal{L}]f), \widehat{H^{k}f}\big)_{L^2_v}\big|
\\&\quad +\big|\big(\mathcal{F}_x(H^{k}\Gamma(f,f)), \ \widehat{H^{k}f}\big)_{L^2_v}\big|.
\end{aligned}
\end{equation*}
This with \eqref{rela} yields that
\begin{equation}\label{enestimate}
\begin{aligned}
&\frac{1}{2}\frac{d}{dt}\norm{\widehat{H^{k}f}}_{L^2_v}^2+c_0\normm{ \widehat{H^{k}f}}^{2}\\
&\leq \norm{\widehat{H^{k}f}}_{L^2_v}^{2}+\delta k  t^{\delta-1} \big|\big( \partial_{v_1} \widehat{H^{k-1}f},\  \widehat{H^{k}f}\big)_{L^2_v}\big|
\\&\quad+ \big|\big(\mathcal{F}_x([ H^{k},\ \mathcal{L}]f), \widehat{H^{k}f}\big)_{L^2_v}\big|+\big|\big(\mathcal{F}_x(H^{k}\Gamma(f,f)), \ \widehat{H^{k}f}\big)_{L^2_v}\big|.
\end{aligned}
\end{equation}
For the second term on the right-hand side of \eqref{enestimate},  it follows from \eqref{+lowoftri}  that,  recalling $1/2\leq s<1$,
\begin{multline*}
 \delta k  t^{\delta-1} \big|\big( \partial_{v_1} \widehat{H^{k-1}f},\  \widehat{H^{k}f}\big)_{L^2_v}\big|\leq Ck\norm{\widehat{H^{k-1}f}}_{H_v^{s}}\norm{\widehat{H^{k}f}}_{H_v^{s}}\\
	\leq \frac{c_0}{2} \normm{\widehat{H^{k}f}} ^2+ C k^2\normm{\widehat{H^{k-1}f}}^2.
\end{multline*}
Thus,
\begin{equation}\label{hkenergy+}
\begin{aligned}
  \frac{1}{2}\frac{d}{dt}\norm{\widehat{H^{k}f}}_{L^2_v}^2+& \frac{c_0}{2}\normm{ \widehat{H^{k}f}}^{2}
  \leq \norm{\widehat{H^{k}f}}_{L^2_v}^{2}+C k^2\normm{\widehat{H^{k-1}f}}^2\\
 &+ \big|\big(\mathcal{F}_x([ H^{k},  \mathcal{L}]f), \widehat{H^{k}f}\big)_{L^2_v}\big|+\big|\big(\mathcal{F}_x(H^{k}\Gamma(f,f)),   \widehat{H^{k}f}\big)_{L^2_v}\big|.
\end{aligned}
\end{equation}
Together with Gronwall's inequality,  we integrate the above estimate  over $[0,t]$ for any $0<t\leq T$; this implies  that
\begin{equation*}\label{eneres}
\begin{aligned}
& \sup\limits_{0<t\leq T}\norm{\widehat{H^{k}f}(t)}_{L^2_v}^2+   \int_{0}^{T}\normm{ \widehat{H^{k}f}(t)}^{2}dt
\\
& \leq  C \lim_{t\rightarrow 0}\norm{\widehat{H^{k}f}(t)}_{L^2_v}^2+ C    k^2 \int_{0}^{T} \normm{\widehat{H^{k-1}f}}^2dt\\ &\quad+ C\int_{0}^{T}\big|\big(\mathcal{F}_x([ H^{k},\ \mathcal{L}]f), \widehat{H^{k}f}\big)_{L^2_v}\big|dt
+C\int_{0}^{T}\big|\big(\mathcal{F}_x(H^{k}\Gamma(f,f)), \ \widehat{H^{k}f}\big)_{L^2_v}\big|dt,
\end{aligned}
\end{equation*}
and thus
\begin{equation}\label{rt}
\begin{aligned}
& \int_{\mathbb Z^3} \sup\limits_{0<t\leq T}\norm{\widehat{H^{k}f}(t,m)}_{L^2_v}d\Sigma(m)+  \int_{\mathbb Z^3}\Big( \int_{0}^{T}\normm{ \widehat{H^{k}f}(t,m)}^{2}dt\Big)^{1/2}d\Sigma(m)
\\
& \leq  C \int_{\mathbb Z^3}  \Big[\lim_{t\rightarrow 0}\norm{\widehat{H^{k}f}(t,m)}_{L^2_v}^2\Big]^{1\over2}d\Sigma(m)+ C  k \int_{\mathbb Z^3} \Big( \int_{0}^{T}\normm{\widehat{H^{k-1}f}(t,m)}^2dt\Big)^{1\over2} d\Sigma(m)\\ &\quad+C \int_{\mathbb{Z}^3}\left(\int_{0}^{T}\big|\big(\mathcal{F}_x([H^{k},\  \mathcal{L}]f), \widehat{H^{k}f}\big)_{L^2_v}\big|dt\right)^{1/2}d\Sigma(m)\\
&\quad+C\int_{\mathbb{Z}^3}\left(\int_{0}^{T}\big|\big(\mathcal{F}_x(H^{k}\Gamma(f,f)), \widehat{H^{k}f}\big)_{L^2_v}\big|dt\right)^{1/2}d\Sigma(m).
\end{aligned}
\end{equation}
We will proceed to deal with the terms on the right-hand side of \eqref{rt}.

 {\it Step 2).} For the first term on  the right-hand side of \eqref{rt}, we claim
 \begin{equation}\label{frt}
 	\int_{\mathbb Z^3}  \Big[\lim_{t\rightarrow 0}\norm{\widehat{H^{k}f}(t,m)}_{L^2_v}^2\Big]^{1\over2}d\Sigma(m)=0.
 \end{equation}
 In fact, 
by \eqref{higher+},
 we see that for each $j\in\mathbb Z_+$,
 $$
 \sup_{0<t\leq T} t^{\frac{1+2s}{2s}k}  |m|^{j}\norm{\partial_{v_1}^{k-j}\hat f(t,m)}_{L_v^2}\in L_m^1,
$$
 which implies
  \begin{eqnarray*}
  \sup_{m\in\mathbb Z^3}\Big( \sup_{0<t\leq T} t^{\frac{1+2s}{2s}k}  |m|^{j}\norm{\partial_{v_1}^{k-j}\hat f(t,m)}_{L_v^2} \Big) \leq C_{k,j}<+\infty,
    \end{eqnarray*}
    where $C_{k,j}$ are   constants depending only on $k$ and $j.$
 As a result, combining the above estimate with \eqref{etj}  yields that, for any $0<t\leq T$ and any $m\in\mathbb Z^3,$
   \begin{equation*} 
  \norm{\widehat{H^{k}f}(t,m)}_{L_v^2} \leq C_{\delta,k}(1+T)^k t^{\big (\delta-\frac{1+2s}{2s}\big)k}  \sum_{j\leq k} C_{k,j},
  \end{equation*}
  recalling  $C_{\delta,k}$ a constant depending only on $k$ and $\delta$. This with condition \eqref{relation} yields
\begin{equation*}
 \forall\ m\in\mathbb Z^3,\quad 		\lim_{t\rightarrow 0} \norm{\widehat{H^{k}f}(t,m)}_{L_v^2}=0,
	\end{equation*}
and thus   assertion \eqref{frt} follows.
	
{\it Step 3).} For the second term on the right-hand side of \eqref{rt}, it follows from inductive assumption \eqref{suppose+} that
\begin{equation}\label{srt}
	k \int_{\mathbb Z^3} \Big( \int_{0}^{T}\normm{\widehat{H^{k-1}f}(t,m)}^2dt\Big)^{1\over2} d\Sigma(m)\leq \frac{\eps_0L^{k-1}k!}{k^2}\leq C\frac{\eps_0L^{k-1}k!}{(k+1)^2}
\end{equation}
for $k\geq 1.$  By assertion \eqref{aprik} which holds true for any $k\in\mathbb Z_+$,  we see condition \eqref{apbound} in Proposition \ref{lemgamma} is fulfilled. Moreover it follows from  inductive assumption \eqref{suppose+} that condition  \eqref{suppose} holds with $C_*=L$ therein.  This enables to  apply Propositions   \ref{lemgamma} and \ref{lem:com}  to control the remaining terms on the right-hand side of \eqref{rt}; this gives that the estimate
\begin{equation}\label{lart}
\begin{aligned}
	&  \int_{\mathbb{Z}^3}\left(\int_{0}^{T}\big|\big(\mathcal{F}_x([H^{k},\  \mathcal{L}]f), \widehat{H^{k}f}\big)_{L^2_v}\big|dt\right)^{1/2}d\Sigma(m)\\
&\qquad\qquad+ \int_{\mathbb{Z}^3}\left(\int_{0}^{T}\big|\big(\mathcal{F}_x(H^{k}\Gamma(f,f)), \widehat{H^{k}f}\big)_{L^2_v}\big|dt\right)^{1/2}d\Sigma(m)\\
&\leq   C{\eps}^{-1}\eps_0   \int_{\mathbb{Z}^3}\sup\limits_{0 < t \leq T}\norm{\widehat{H^{k}f}(t,m)}_{L^2_v}d\Sigma(m)\\
&\quad +    \inner{\eps+C{\eps}^{-1}\eps_0}  \int_{\mathbb{Z}^3}\Big (\int_{0}^{T}\normm{ \widehat{H^{k}f}(t,m)}^{2}dt\Big )^{1/2}d\Sigma(m) \\
&\quad+C \eps^{-1}  \eps_0^2 \frac{L^{k} k!}{(k+1)^2} +C \eps^{-1}     \frac{\eps_0L^{k-1} k!}{(k+1)^2}
\end{aligned}
\end{equation}
holds true for any $\eps>0$.

{\it Step 4).} Substituting   estimates \eqref{frt}, \eqref{srt} and \eqref{lart} into \eqref{rt}, we obtain   that, for any $\eps>0$
\begin{equation}\label{00}
\begin{aligned}
&\int _{Z^3}\sup\limits_{0 < t \leq T}\norm{\widehat{H^{k}f}(t,m)}_{L^2_v}d\Sigma (m)+  \int _{Z^3}\left(\int_{0}^{T} \normm{ \widehat{H^{k}f}(t,m)}^{2}dt\right)^{1/2}d\Sigma (m)
\\ & \leq   C{\eps}^{-1}\eps_0   \int_{\mathbb{Z}^3}\sup\limits_{0 < t \leq T}\norm{\widehat{H^{k}f}(t,m)}_{L^2_v}d\Sigma(m)\\
&\quad +    \inner{\eps+C{\eps}^{-1}\eps_0}  \int_{\mathbb{Z}^3}\Big (\int_{0}^{T}\normm{ \widehat{H^{k}f}(t,m)}^{2}dt\Big )^{1/2}d\Sigma(m) \\
&\quad+C \eps^{-1}  \eps_0^2 \frac{L^{k} k!}{(k+1)^2} +C \eps^{-1}     \frac{\eps_0L^{k-1} k!}{(k+1)^2}.
\end{aligned}
\end{equation}
 Note that $\eps_0$ is a sufficiently small number, so we may assume without loss of generality that
\begin{equation}\label{smass}
	C\eps_0\leq 1/16
\end{equation}
with $C>0$ the constant in \eqref{00}. Consequently,
  if we choose in particular $\eps=1/4$ in \eqref{00},  then in view of \eqref{smass} we have
  \begin{multline*}
   	\int _{Z^3}\sup\limits_{0 < t \leq T}\norm{\widehat{H^{k}f}(t,m)}_{L^2_v}d\Sigma (m)+  \int _{Z^3}\left(\int_{0}^{T} \normm{ \widehat{H^{k}f}(t,m)}^{2}dt\right)^{1/2}d\Sigma (m)
\\
 \leq    \frac12    \frac{ \eps_0  L^{k} k!}{(k+1)^2} +8C   \frac{ \eps_0  L^{k-1} k!}{(k+1)^2}  \leq  \frac{\eps_0L^{k} k!}{(k+1)^2},
  \end{multline*}
  provided $L$ is large enough such that $L\geq 16C.$     This yields the validity of \eqref{suppose+} for $j=k$.
  Thus assertion \eqref{k} follows. The treatment for   	$$\frac{1}{1+\delta} t^{\delta+1} \partial_{x_i}+t^\delta \partial_{v_i}, \quad i=2 \textrm{ or } 3,$$
  is just in the same way.
The proof of Theorem \ref{theorem} is completed.
  \end{proof}

\subsection{Proof of Theorem \ref{mainresult}: analytic regularization effect for $\boldsymbol{\frac12 \leq s<1}$} \label{subsec:ana}
Here  we   prove Theorem \ref{mainresult}    for the case of $1/2\leq s<1$,  and it suffices to prove that for any $T\geq 1$ and any $\lambda>1+\frac{1}{2s}$, there exists a constant $C$, depending only on $T,\lambda,$ and the numbers $c_0, C_0$ in \eqref{rela} and  \eqref{trin},  such that
\begin{equation}\label{anestimate+}
	\forall \ \alpha , \beta \in \mathbb{Z}_{+}^3, \quad 	\sup_{0<t\leq T}t^{(\lambda+1)\abs\alpha+ \lambda \abs\beta} \norm{\partial_x^{\alpha}\partial_{v}^{\beta}f(t)}_{L^2_{x,v}} \leq    C^{|\alpha|+|\beta|+1}(|\alpha|+|\beta|)! .
\end{equation}
The key part to prove \eqref{anestimate+} is the quantitative estimate \eqref{k}.   In the following discussion,  let $\delta_j, j=1,2,$ be defined in terms of $\lambda$ by \eqref{de1de2}.  Accordingly define  $H_{\delta_j}, j=1,2,$  by \eqref{vecM}.

    Under the smallness condition \eqref{smint},  Duan-Liu-Sakamoto-Strain \cite{MR4230064} obtained  the global existence and uniqueness of the mild solution $f\in L_m^1L_T^\infty L_v^2$  to  the Boltzmann equation \eqref{eqforper},  which satisfies that there exists a constant $C_1>0$   such that  for any $T\geq 1,$
\begin{equation}\label{conc1}
	 \int_{{\mathbb Z}^3}  \Big( \sup_{0<t\leq T}   \norm{ {\hat f (t, m )}}_{L^2_v}\Big) d \Sigma(m)+\int_{{\mathbb Z}^3}   \Big( \int_{0}^T  \normm{ \hat f (t,m )} ^2dt \Big)^{\frac{1}{2}} d \Sigma(m) \leq C_1 \epsilon.
\end{equation}
Moreover,  it is shown in \cite{MR4356815} that the above mild solution admits  Gevrey  regularity at $t>0,$ that is,  there exists a constant $C_2>0$ such that the estimate
\begin{equation*}
	\begin{aligned}
    &\int_{{\mathbb Z}^3}  \Big( \sup_{0<t\leq T} \phi(t)^{ \frac{1+2s}{2s}(N+\abs\beta)}  \abs{m}^{N}\norm{ {\partial_v^\beta \hat f (t,m)}}_{L^2_v}\Big) d \Sigma(m)  \\
    &\   +\int_{\mathbb Z^3}  \Big(\int_ {0}^T\phi(t)^{ \frac{1+2s}{s}(N+\abs\beta) }\abs{m}^{2N}  \normm{ {\partial_v^\beta\hat f (t,m)}} ^2dt\Big)^{1\over2} d \Sigma(m)
\le C_2^{N+\abs\beta+1}(N+\abs\beta)^{\frac{1+2s}{2s}}
\end{aligned}
\end{equation*}
holds true for  any $N\in \mathbb Z_+$ and any $\beta\in\mathbb Z_+^3$, where   $\phi(t)=\min\{t,\  1\}$. Note the constant  $C_1$ in \eqref{conc1} is independent of $T$ and the fact that 
\begin{align*}
\forall\ k\in\mathbb Z_+,\ \forall\ 0<t\leq T,\quad 	t^{ \frac{1+2s}{2s}k}\leq  T^{\frac{1+2s}{2s}k} \phi(t)^{ \frac{1+2s}{2s}k},
\end{align*}
and thus conditions \eqref{higher+} and \eqref{lower++}    are fulfilled by the above mild solution $f$, provided $\epsilon$ is small enough.
 This enables us to apply  Theorem \ref{theorem} to  $H_{\delta_j}, j=1,2,$    given above,  to conclude that  for any $T\geq 1,$ there exists
a constant $L$, depending only on $T,\delta_1,\delta_2,$ and the numbers $c_0, C_0$ in \eqref{rela} and \eqref{trin},  such that for  each $j=1,2$,  the  estimate
 	\begin{multline}\label{k+}
 	\int_{\mathbb{Z}^3}\sup\limits_{0 < t \leq T}\norm{\widehat{H_{\delta_j}^{k}f}(t,m)}_{L^2_v}d\Sigma(m)\\
 	+ \int_{\mathbb{Z}^3}\left(\int_{0}^{T}\normm{  \widehat{H_{\delta_j}^{k}f}(t,m)}^{2}dt\right)^{1/2}d\Sigma(m) \leq \frac{\eps_0 L^{k } k! }{(k+1)^2}
 	\end{multline}
 	holds true for  any $k\in\mathbb Z_+$,   where $\eps_0=C_1\epsilon$ with $C_1$ the constant in \eqref{conc1}.    Observe the discrete  Lebesgue spaces  $\ell^p$ are increasing in $p\in [1,+\infty]$, so that  in particular  $L_m^1\subset L_m^2$ for
 	$m\in\mathbb Z^3$. Then it follows from \eqref{k+}
 	that, for any $k\in\mathbb Z_+$ and each $j=1,2$,
 	\begin{multline}\label{k++}
 		\sup_{0<t\leq T}\norm{H_{\delta_j}^{k}f(t)}_{L_{x,v}^2}=   \sup_{0<t\leq T}\norm{\widehat{H_{\delta_j}^{k}f}(t)}_{L_{m}^2 L_v^2}\\
 		\leq   \int_{\mathbb{Z}^3}\sup\limits_{0 < t \leq T}\norm{\widehat{H_{\delta_j}^{k}f}(t,m)}_{L^2_v}d\Sigma(m)\leq \frac{\eps_0 L^{k } k! }{(k+1)^2}.
 	\end{multline}
 Next we will deduce the estimate on   classical derivatives. As a preliminary step, we first prove that,  for any $ k\in\mathbb Z_+$,
  \begin{equation}\label{pse1}
	\norm{(A_1+A_2)^k f}_{L^2_{x,v}}\leq 2^{k} \norm{ A_1^k f}_{L^2_{x,v}}+2^{k} \norm{ A_2^k f}_{L^2_{x,v}},
 \end{equation}
where $A_j, j=1,2,$ are  two Fourier multipliers  with symbols $ a_j=a_j(m,\eta)$, that is,
 \begin{eqnarray*}
 \mathcal F_{x,v}	(A_j f)(m,\eta)=a_j(m,\eta)\mathcal F_{x,v}f(m,\eta),
 \end{eqnarray*}
 with $\mathcal F_{x,v} f   $   the full Fourier transform in $(x,v)\in\mathbb T\times\mathbb R^3.$
 To prove \eqref{pse1} we compute
 \begin{align*}
&\Big|\mathcal F_{x,v}\big(	(A_1+A_2)^kf \big)(m,\eta)\Big|^2 =\big|  	\big (a_1  (m,\eta)+a_2 (m,\eta)\big )^k \mathcal F_{x,v}f (m,\eta)\big|^2  \\
&\leq \big(|a_1(m,\eta)|+|a_2(m,\eta)| \big)^{2k}  \times  \big| \mathcal{F}_{x,v}f(m, \eta) \big |^2 \\
&\leq 2^{2k} \big|a_1(m,\eta)^k \mathcal{F}_{x,v}f(m, \eta) \big|^2  +  2^{2k} \big| a_2(m,\eta)^k\mathcal{F}_{x,v}f( m, \eta) \big| ^{2}\\
&\leq 2^{2k}\big|  \mathcal{F}_{x,v}(A_1^kf)( m, \eta) \big|^2  + 2^{2k} \big|   \mathcal{F}_{x,v}(A_2^kf)(  m, \eta) \big| ^{2},
\end{align*}
the second inequality using the fact that $(p+q)^{ 2k}\leq (2p)^{ 2k}+(2q)^{ 2k}$ for any numbers $p,q\geq 0$ and any $k\in\mathbb Z_+$.
 As a result, we combine the above estimate with Parseval equality, to conclude that
   \begin{equation*}
   	\begin{aligned}
   		& \norm{(A_1+A_2)^kf }^2_{L^2_{x,v}} =\int_{\mathbb Z^3\times \mathbb R^3} \Big|\mathcal F_{x,v}\big(	(A_1+A_2)^kf \big)(m,\eta)\Big|^2d\Sigma(m)\, d\eta
   		\\ &\leq 2^{2k} \int_{\mathbb Z^3\times \mathbb R^3} \big|  \mathcal{F}_{x,v}(A_1^kf)(  m, \eta) \big|^2 d\Sigma(m)  d\eta + 2^{2k} \int_{\mathbb Z^3\times \mathbb R^3} \big|   \mathcal{F}_{x,v}(A_2^kf)( m, \eta) \big| ^{2} d\Sigma(m)  d\eta
   		\\& \leq     2^{2k}\norm{ A_1^{k}f }^2_{L^2_{x,v}}+2^{2k}\norm{ A_2^{k}f }^2_{L^2_{x,v}}.
   	\end{aligned}
   \end{equation*}
   This gives \eqref{pse1}.   Now we use \eqref{generate} and  then apply \eqref{pse1}  with
   \begin{align*}
   A_1=	\frac{(\delta_2+ 1)(\delta_1+1)}{\delta_2-\delta_1}  H_{\delta_1},\quad A_2=-\frac{(\delta_2+ 1)(\delta_1+1)}{\delta_2-\delta_1} t^{\delta_1-\delta_2}H_{\delta_2},
   \end{align*}
 to compute that, observing $\delta_1>\delta_2,$
  \begin{align*}
 	\sup_{0<t\leq T}t^{(\lambda+1)k} \norm{\partial_{x_1}^{k}f(t)}_{L^2_{x,v}} &=   \sup_{0<t\leq T} \norm{(A_1+A_2)^kf(t)}_{L^2_{x,v}}\\
 	& \leq 2^{k} \sup_{0<t\leq T} \norm{ A_1^kf(t)}_{L^2_{x,v}} +2^{k} \sup_{0<t\leq T} \norm{ A_2^kf(t)}_{L^2_{x,v}}\\ \
 	&\leq C_3^{k}  \sup_{0<t\leq T}\inner{\norm{ H_{\delta_1}^{k}f }_{L^2_{x,v}}+\norm{ H_{\delta_2}^{k}f }_{L^2_{x,v}}},
 	 	\end{align*}
 	where $C_3$ is a constant depending only on $T,\delta_1,\delta_2$.  Combining the above estimate   with \eqref{k++}, we conclude that
 	\begin{equation*}
 		\sup_{0<t\leq T}t^{(\lambda+1)k} \norm{\partial_{x_1}^{k}f(t)}_{L^2_{x,v}}\leq   \eps_0  (2C_3L)^{k} k!.
 	\end{equation*}
 Similarly, the above estimate is also true with $\partial_{x_1}$ replaced by $\partial_{x_2}$ or $\partial_{x_3}$. This, with the fact that
\begin{eqnarray*}
\forall\ \alpha\in\mathbb Z_+^3,\quad 	\norm{\partial_x^\alpha f}_{L^2_{x,v}}\leq \sum_{1\leq j\leq 3}\norm{\partial_{x_j}^{\abs\alpha}f}_{L^2_{x,v}},
\end{eqnarray*}
gives
\begin{eqnarray*}
	\forall\ \alpha\in\mathbb Z_+^3,\quad 	\sup_{0<t\leq T}t^{(\lambda+1)\abs\alpha}\norm{\partial_x^\alpha f(t)}_{L^2_{x,v}}\leq \eps_0  (6C_3L)^{\abs\alpha}\abs\alpha!.
\end{eqnarray*}
In the same way we have
  \begin{eqnarray*}
\forall\ \beta\in\mathbb Z_+^3, \quad 	\sup_{0<t\leq T}t^{ \lambda \abs\beta} \norm{\partial_{v}^{\beta}f(t)}_{L^2_{x,v}}    \leq \eps_0  (6C_3L)^{\abs\beta} \abs\beta!.
 	\end{eqnarray*}
 	Consequently,  for any $\alpha,\beta\in\mathbb Z_+^3,$
 	\begin{equation}\label{tequ}
 		\begin{aligned}
 		&	\sup_{0<t\leq T}t^{(\lambda +1)\abs\alpha+ \lambda \abs\beta} \norm{\partial_x^{\alpha}\partial_{v}^{\beta}f(t)}_{L^2_{x,v}}   \\
 		&\leq \sup_{0<t\leq T} \Big(t^{2(\lambda +1)\abs\alpha} \norm{\partial_{x}^{2\alpha}f(t)}_{L^2_{x,v}}\Big)^{1/2}\Big(t^{2 \lambda \abs\beta}\norm{\partial_{v}^{2\beta}f(t)}_{L^2_{x,v}}\Big)^{1/2} \\
 			&\leq \eps_0  (6C_3L)^{ \abs\alpha+\abs\beta }\Big(\abs{2\alpha}! \abs{2\beta}!\Big)^{1/2}\leq \eps_0  (12C_3L)^{ \abs\alpha+\abs\beta }\inner{ \abs\alpha+\abs\beta }!,
 		\end{aligned}
 	\end{equation}
the last inequality using  the fact that $p!q!\leq(p+q)!\leq 2^{p+q}p!q!$ for any $p, q \in \mathbb{Z}$. Thus    the desired estimate \eqref{anestimate+}  follows from \eqref{tequ} by choosing $C$ large enough such that $C>12C_3L+1$.  We have proven Theorem \ref{mainresult} for  the strong angular singularity condition that $1/2\leq s<1$.

\section{Optimal Gevrey smoothing effect for mild angular singularity}
\label{sec:mild}

This part focus on the mild angular singularity case, i.e., $0<s<1/2$ in \eqref{angu}. In this case, we can expect  Gevrey class regularity with optimal   Gevrey   index $\frac{1}{2s}$.

    \begin{theorem}\label{thm:Gevrey}
    	Assume that the cross-section satisfies \eqref{kern} and \eqref{angu} with $\gamma \geq 0 $ and $ 0<s <1/2$.   Let $T\geq 1$ be arbitrarily given, and
 	 let $f\in L_m^1L_T^\infty L_v^2$  be any solution to the Cauchy problem \eqref{eqforper} satisfying \eqref{higher+}. Moreover, let   $\lambda$ be an arbitrarily given number satisfying \eqref{relation} and let $H_{\delta_1}$ and $H_{\delta_2}$ be two  vector fields defined by \eqref{vecM}, with $\delta_j$  defined in terms of $\lambda$ by \eqref{de1de2}.
 	 Then  there exists a sufficiently small constant $\eps_0>0$ and a large constant $L\geq 1$, with $L$ depending only on $T,\lambda,$ and the numbers $c_0$ and $C_0$ in Section \ref{sec:prelim},  such that if
 	  \begin{equation*}
 \int_{{\mathbb Z}^3}  \Big( \sup_{0<t\leq T}   \norm{ {\hat f (t, m )}}_{L^2_v}\Big) d \Sigma(m)+\int_{{\mathbb Z}^3}   \Big( \int_0^T  \normm{ \hat f (t,m )} ^2dt \Big)^{\frac{1}{2}} d \Sigma(m) \leq \eps_0,
 \end{equation*}
 then the  estimate
 	\begin{multline}\label{+k+++}
 \sum_{1\leq j\leq 2}	\int_{\mathbb{Z}^3}\sup\limits_{0 < t \leq T}\norm{\widehat{H_{\delta_j}^{k}f}(t,m)}_{L^2_v}d\Sigma(m)\\
 	+\sum_{1\leq j\leq 2} \int_{\mathbb{Z}^3}\left(\int_{0}^{T}\normm{  \widehat{H_{\delta_j}^{k}f}(t,m)}^{2}dt\right)^{1/2}d\Sigma(m) \leq \frac{\eps_0 L^{k } (k!)^{\frac{1}{2s}}}{(k+1)^2}
 	\end{multline}
 	holds true  for  any $k\in\mathbb Z_+$.
\end{theorem}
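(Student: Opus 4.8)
The plan is to mimic the induction-on-$k$ argument in the proof of Theorem~\ref{theorem}, but to run it simultaneously for the two vector fields $H_{\delta_1}$ and $H_{\delta_2}$ fixed in the statement (the relations \eqref{generate} will couple the two, so neither can be treated alone). The base case $k=0$ is the hypothesis \eqref{lower++} with $L\ge1$. Exactly as in Steps~0--2 of the proof of Theorem~\ref{theorem}, assumption \eqref{higher+} combined with $\delta_j>\frac{1+2s}{2s}$ (a consequence of \eqref{lodelta}) gives the a priori finiteness of the two quantities in \eqref{+k+++} together with $\lim_{t\to0}\norm{\widehat{H_{\delta_j}^kf}(t,m)}_{L^2_v}=0$ for every $m$, so the energy computations below are legitimate rather than formal.

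Assuming \eqref{+k+++} holds for all orders $\le k-1$ with a constant $L$ to be fixed below, I would, for $j=1,2$, apply $H_{\delta_j}^k$ to \eqref{eqforper}, use the commutator identity \eqref{kehigher}, pass to the partial Fourier transform in $x$, pair in $L^2_v$ with $\widehat{H_{\delta_j}^kf}$, use the coercive bound \eqref{rela}, and then integrate over $[0,T]$, take square roots and sum over $m\in\mathbb Z^3$, as in \eqref{enestimate}--\eqref{rt}. The nonlinear self-interaction $H_{\delta_j}^k\Gamma(f,f)$ and the linear commutator $[H_{\delta_j}^k,\mathcal L]f$ are bounded verbatim by Propositions~\ref{lemgamma} and \ref{lem:com} with $C_*=L$ and $\tau$ as in \eqref{taudef}, i.e. $\tau=\frac1{2s}$ here, since those propositions are already proved for all $0<s<1$; they produce $C\eps^{-1}\eps_0^2\frac{L^k(k!)^\tau}{(k+1)^2}+C\eps^{-1}\frac{\eps_0L^{k-1}(k!)^\tau}{(k+1)^2}$ plus $\eps$-small multiples of the two target quantities in \eqref{+k+++} at order $k$, while the \emph{lower-order} term coming from $\normm{\widehat{H_{\delta_j}^{k-1}f}}$ is handled as in \eqref{srt}.

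The genuinely new difficulty — and the reason the index is $\tau=\frac1{2s}>1$ rather than $1$ — is the transport commutator contribution $\delta_jkt^{\delta_j-1}\big|\big(\partial_{v_1}\widehat{H_{\delta_j}^{k-1}f},\,\widehat{H_{\delta_j}^kf}\big)_{L^2_v}\big|$. When $1/2\le s<1$ this is absorbed outright, because moving $\partial_{v_1}$ onto $\widehat{H_{\delta_j}^kf}$ costs only an operator of order $1-s\le s$, which is dominated by $\normm{\cdot}$ through \eqref{+lowoftri}; for $0<s<1/2$ one has $1-s>s$ and this fails by exactly the amount $1-2s$. I would handle it with the one-dimensional (in $v_1$) estimate $\big|\big(\partial_{v_1}g,h\big)_{L^2_v}\big|\le\norm{\abs{D_{v_1}}^{1-s}g}_{L^2_v}\,\norm{\abs{D_{v_1}}^{s}h}_{L^2_v}$, bound $\norm{\abs{D_{v_1}}^{s}\widehat{H_{\delta_j}^kf}}_{L^2_v}\le\norm{\widehat{H_{\delta_j}^kf}}_{H^s_v}\lesssim\normm{\widehat{H_{\delta_j}^kf}}$ (absorbable), and split the missing regularity on $g=\widehat{H_{\delta_j}^{k-1}f}$ by convexity of $L^2$-based Sobolev norms in the single variable $v_1$, $\norm{\abs{D_{v_1}}^{1-s}g}_{L^2_v}\le\norm{\abs{D_{v_1}}^{s}g}_{L^2_v}^{\,s/(1-s)}\,\norm{\partial_{v_1}g}_{L^2_v}^{\,(1-2s)/(1-s)}$. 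The first factor is $\lesssim\normm{\widehat{H_{\delta_j}^{k-1}f}}^{\,s/(1-s)}$, controlled by the inductive bound at order $k-1$; the second is rewritten, via the generating relations \eqref{generate} applied to $(x_1,v_1)$, in terms of the order-$k$ directional derivatives $\widehat{H_{\delta_1}^kf},\widehat{H_{\delta_2}^kf}$ and of controlled lower-order pieces carrying one factor of $m_1$, at the price of a nonnegative power $t^{\delta_1-\delta_2}$ (recall $\delta_1>\delta_2$) and an inverse power $t^{-\delta_j}$; since it enters only with the sub-unit exponent $(1-2s)/(1-s)$, the order-$k$ pieces can be put into $\eps$-small multiples of the target quantities. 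The precise value of $\delta_2$ in \eqref{de1de2} is exactly what makes the bookkeeping close: after combining the outer weight $t^{\delta_j-1}$ with the built-in weights $t^{\delta_j},t^{\delta_j+1}$ and with $t^{-\delta_j(1-2s)/(1-s)}$, every power of $t$ that appears is integrable on $(0,T]$ for $k\ge1$, and the factorial count — the $(1-2s)/(1-s)$-weighted geometric mean of the inductive $\sim\eps_0L^{k-1}((k-1)!)^{1/2s}$ against an $\eps$-small multiple of the order-$k$ energy, together with the extra power $k$ from the commutator — yields $(k!)^{1/2s}$ rather than $(k!)^{1+1/2s}$. This interpolation-and-bookkeeping step is the main obstacle; the rest is routine.

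Assembling all contributions into the analogue of \eqref{00} summed over $j=1,2$, I would then take $\eps=\tfrac14$, use the smallness $C\eps_0\le\tfrac1{16}$ to absorb the quadratic self-term, and finally fix $L\ge\max\{16C,4T^{\delta_1},4T^{\delta_2}\}$; this yields \eqref{+k+++} at order $k$ and closes the induction. The treatment of the two remaining coordinate pairs $(x_2,v_2),(x_3,v_3)$ is identical, since \eqref{generate} holds for each of them.
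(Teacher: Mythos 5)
Your overall frame — simultaneous induction in $k$ for $H_{\delta_1},H_{\delta_2}$, the base case from the smallness hypothesis, the a priori finiteness and vanishing at $t=0$ from \eqref{higher+}, and the use of Propositions \ref{lemgamma} and \ref{lem:com} with $C_*=L$ and $\tau=\frac1{2s}$ — is exactly the paper's, and you correctly isolate the crux: the transport commutator term $\delta_j k t^{\delta_j-1}\big|\big(\partial_{v_1}\widehat{H_{\delta_j}^{k-1}f},\widehat{H_{\delta_j}^{k}f}\big)_{L^2_v}\big|$. But your treatment of that term has a genuine quantitative gap, in two respects. First, the factorial bookkeeping does not close at the claimed index. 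After pairing ($\eps\normm{\widehat{H_{\delta_j}^{k}f}}^2$ absorbed) you are left with $C\eps^{-1}k^2t^{2(\delta_j-1)}\norm{\abs{D_{v_1}}^{1-s}\widehat{H_{\delta_j}^{k-1}f}}_{L^2_v}^2$, and your convexity split puts the order-$k$ information into $\norm{\partial_{v_1}\widehat{H_{\delta_j}^{k-1}f}}_{L^2_v}^{2(1-2s)/(1-s)}$, i.e.\ into an \emph{$L^2$} quantity at order $k$ (via \eqref{generate}), while the inductive factor is $\normm{\widehat{H_{\delta_j}^{k-1}f}}^{2s/(1-s)}$. Since no growing factor of $k$ can be placed on the order-$k$ pieces (otherwise the absorption/Gronwall step destroys the induction), Young's inequality forces the whole $k^2$ onto the inductive factor raised to the conjugate exponent $\frac{1-s}{s}$, producing $k^{2(1-s)/s}\normm{\widehat{H_{\delta_j}^{k-1}f}}^2$ in the differential inequality, hence $k^{(1-s)/s}$ after the square root. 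The induction at index $\frac1{2s}$ needs this power to be at most $\frac1{2s}$, and $\frac{1-s}{s}-\frac1{2s}=\frac{1-2s}{2s}>0$ precisely in the regime $0<s<\frac12$; so your count gives at best $(k!)^{(1-s)/s}$, not $(k!)^{1/(2s)}$. Second, the $t$-weights do not close for all admissible $\lambda$: the conversion of $\partial_{v_1}$ into order-$k$ directional derivatives costs $t^{-\delta_1}$, so the inductive term carries $t^{\big(2(\delta_j-1)-\frac{2(1-2s)}{1-s}\delta_1\big)\frac{1-s}{s}}$ (or, after shifting part of the weight onto the order-$k$ $L^2$ term, a constraint of the form $\delta_j-1-\frac{1-2s}{1-s}\delta_1>-\frac{1-2s}{2(1-s)}$). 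With $\delta_1=\lambda$ and $\delta_2=1+2s+(1-2s)\lambda$ from \eqref{de1de2}, this fails as soon as $\lambda$ is large (already for $\lambda>\frac{2(1-s)}{1-2s}$ the exponent is negative and eventually non-recoverable), whereas $\lambda$ is an arbitrary number $>1+\frac1{2s}$. The induction hypothesis gives no extra vanishing of $\normm{\widehat{H_{\delta_j}^{k-1}f}(t)}$ as $t\to0$, so an unbounded weight there cannot be compensated, and a non-integrable weight on the order-$k$ $L^2$ term cannot be handled by Gronwall.

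The missing idea is where the paper places the ``high'' endpoint of the interpolation. Instead of interpolating $\norm{\abs{D_{v_1}}^{1-s}\widehat{H_{\delta_j}^{k-1}f}}_{L^2_v}$ between $\abs{D_{v_1}}^{s}$ and $\partial_{v_1}$ in $L^2$, the paper keeps the duality bound $\norm{\partial_{v_1}\widehat{H_{\delta_j}^{k-1}f}}_{H^{-s}_v}\norm{\widehat{H_{\delta_j}^{k}f}}_{H^s_v}$ and interpolates $\norm{\partial_{v_1}\widehat{H_{\delta_j}^{k-1}f}}_{H^{-s}_v}^2\leq\tilde\eps\,\norm{\partial_{v_1}\widehat{H_{\delta_j}^{k-1}f}}_{H^{s}_v}^2+\tilde\eps^{-\frac{1-2s}{2s}}\norm{\partial_{v_1}\widehat{H_{\delta_j}^{k-1}f}}_{H^{s-1}_v}^2$ with the $t$- and $k$-dependent parameter $\tilde\eps=\eps^2t^{2\delta_1}k^{-2}t^{-2(\delta_j-1)}$. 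The high endpoint then reads $\eps\norm{t^{\delta_1}\partial_{v_1}\widehat{H_{\delta_j}^{k-1}f}}_{H^s_v}^2$, and only here is \eqref{generate} used (together with the pointwise symbol inequality, as in \eqref{pse1}) to bound it by $C\eps\sum_{i}\normm{\widehat{H_{\delta_i}^{k}f}}^2$ — i.e.\ the order-$k$ part lands in the \emph{dissipation}, with the conversion weight $t^{\delta_1}$ supplied by $\tilde\eps$, so no leftover negative power of $t$ appears. The low endpoint then carries exactly $C\eps^{\frac{s-1}{s}}k^{\frac1s}\,t^{\frac1s(\delta_j-1-(1-2s)\delta_1)}\normm{\widehat{H_{\delta_j}^{k-1}f}}^2$, whose $t$-exponent is nonnegative because $\delta_2-1-(1-2s)\delta_1=2s\geq0$ — this is precisely what the definition \eqref{de1de2} of $\delta_2$ is engineered for — and whose $k$-cost $k^{1/s}$ becomes $k^{1/(2s)}$ after the square root, which is the borderline power compatible with $k^{1/(2s)}\big((k-1)!\big)^{1/(2s)}=(k!)^{1/(2s)}$. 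With this replacement of your interpolation step, the rest of your argument (absorption with $\eps=\frac14$, smallness $C\eps_0\leq\frac1{16}$, choice of $L$) goes through as in the paper.
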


\begin{proof}
	[Sketch of the proof of Theorem \ref{thm:Gevrey}] The proof is similar as that of Theorem \ref{theorem}. So for brevity we only sketch the proof, emphasizing the difference.  In the following argument, we always assume that $0<s< \frac 12$, and denote by $C$  different generic constants,   depending only on $T, \lambda,$ and the numbers $c_0,C_0$ in Section \ref{sec:prelim}.
	
	  As in the previous section we use induction on $k$ to prove \eqref{+k+++}. Suppose that for given $k\geq 1$, the estimate
	\begin{multline}\label{+suppose}
\sum_{1\leq j\leq 2}	\int_{\mathbb{Z}^3}\sup\limits_{0 < t \leq T}\norm{\widehat{H_{\delta_j}^{\ell}f}(t,m)}_{L^2_v}d\Sigma(m)\\
 	+\sum_{1\leq j\leq 2} \int_{\mathbb{Z}^3}\left(\int_{0}^{T}\normm{  \widehat{H_{\delta_j}^{\ell}f}(t,m)}^{2}dt\right)^{1/2}d\Sigma(m)  \leq \frac{\eps_0L^{\ell} (\ell!)^{\frac{1}{2s}}}{(\ell+1)^2}
\end{multline}
holds true for any $\ell\leq k-1$.  We will prove the above estimate is still valid for $\ell=k$.
Repeating the argument before \eqref{enestimate}, we have the following estimate similar to \eqref{enestimate}:
 \begin{equation}\label{hk1+}
		\begin{aligned}
&\frac{1}{2}\frac{d}{dt}\sum_{1\leq j\leq 2}\norm{\widehat{H_{\delta_j}^{k}f}}_{L^2_v}^2+c_0\sum_{1\leq j\leq 2}\normm{ \widehat{H_{\delta_j}^{k}f}}^{2}\\
&\leq \sum_{1\leq j\leq 2} \norm{\widehat{H_{\delta_j}^{k}f}}_{L^2_v}^{2}+\sum_{1\leq j\leq 2}\delta_j k  t^{\delta_j-1} \big|\big( \partial_{v_1} \widehat{H_{\delta_j}^{k-1}f},\  \widehat{H_{\delta_j}^{k}f}\big)_{L^2_v}\big|
\\&\quad+\sum_{1\leq j\leq 2} \big|\big(\mathcal{F}_x([ H_{\delta_j}^{k},\ \mathcal{L}]f), \widehat{H_{\delta_j}^{k}f}\big)_{L^2_v}\big|+\sum_{1\leq j\leq 2}\big|\big(\mathcal{F}_x(H_{\delta_j}^{k}\Gamma(f,f)), \ \widehat{H_{\delta_j}^{k}f}\big)_{L^2_v}\big|.
\end{aligned}
	\end{equation}
	It suffices to deal with the second term on the right-hand side, since the other terms can be treated in the same way as that in the previous case of $1/2\leq s<1.$
	
For each $j=1,2,$ and for any $\eps >0$, we have
\begin{equation}\label{interpo}
\begin{aligned}
&k  t^{\delta_j-1}\big|\big(\partial_{v_1} \widehat{H_{\delta_j}^{k-1}f},\ \widehat{H_{\delta_j}^{k}f}\big)_{L^2_v}\big|   \leq k  t^{\delta_j-1}\norm{\partial_{v_1} \widehat{H_{\delta_j}^{k-1}f}}_{H_v^{-s}}\norm{\widehat{H_{\delta_j}^{k}f}}_{H_v^{s}}\\
&\qquad\qquad \qquad\qquad\qquad \leq \eps   \normm{\widehat{H_{\delta_j}^{k}f}}^2+ C \eps^{-1}  k^2  t^{2(\delta_j-1)}\norm{\partial_{v_1} \widehat{H_{\delta_j} ^{k-1}f}}_{H_v^{-s}}^2,
\end{aligned}
\end{equation}
the last inequality using \eqref{+lowoftri}.
Moreover, recalling $0<2s<1$,  we use  the interpolation inequality that,
$$
 \forall\ \tilde\eps>0,\quad \|g\|^2_{H_v^{-s}}\leq \tilde\eps  \| g\|^{2}_{H^{s}_v}+\tilde\eps^{-\frac{1-2s}{2s}}\| g\|^{2}_{H^{s-1}_v},
$$
with $\tilde \eps =\eps^2 t^{2\delta_1}k^{-2}  t^{-2(\delta_j-1)}$ and $g=\partial_{v_1}\widehat{ H_{\delta_j}^{k-1}f}$; this gives
\begin{equation}
\begin{aligned}\label{eqpre}
	&\eps^{-1}k^2  t^{2(\delta_j-1)}\norm{\partial_{v_1}\widehat{ H_{\delta_j}^{k-1}f}}_{H_v^{-s}}^2\\
	&\leq  \eps  t^{2\delta_1}\norm{\partial_{v_1}\widehat{ H_{\delta_j}^{k-1}f}}_{H_v^{s}}^2+\eps^{\frac{s-1}{s}}k^{\frac{1}{s}}  t^{\frac{1}{s}(\delta_j-1)} t^{-\frac{1-2s }{s}\delta_1}\norm{\partial_{v_1}\widehat{ H_{\delta_j}^{k-1}f}}_{H_v^{s-1}}^2\\
	&\leq  \eps  \norm{t^{\delta_1}\partial_{v_1}\widehat{ H_{\delta_j}^{k-1}f}}_{H_v^{s}}^2+C\eps^{\frac{s-1}{s}}k^{\frac{1}{s}}  t^{\frac{1}{s}\big (\delta_j-1-(1-2s)\delta_1\big  )}  \normm{\widehat{ H_{\delta_j}^{k-1}f}}^2,
\end{aligned}
\end{equation}
the last inequality using again \eqref{+lowoftri}. As for the last term on the right-hand side of \eqref{eqpre}, we use  the definition  \eqref{de1de2}  of $\delta_j$   and  the fact that $\delta_1>\delta_2$ in view of \eqref{lodelta}, to compute, for $j=1, 2,$
 \begin{align*}
	\delta_j-1-(1-2s)\delta_1 \geq \delta_2-1-(1-2s)\delta_1\geq  2s +(1-2s)\lambda-(1-2s)\lambda \geq 0,
\end{align*}
which yields
\begin{eqnarray*}
\forall\ 0<t\leq T,\quad \eps^{\frac{s-1}{s}}k^{\frac{1}{s}}  t^{\frac{1}{s}\big (\delta_j-1-(1-2s)\delta_1\big  )}  \normm{\widehat{ H_{\delta_j}^{k-1}f}}^2\leq C	 \eps^{\frac{s-1}{s}}k^{\frac{1}{s}}    \normm{\widehat{ H_{\delta_j}^{k-1}f}}^2,
\end{eqnarray*}
and thus, substituting the above inequality into \eqref{eqpre},
\begin{equation*}
	\eps^{-1}k^2  t^{2(\delta_j-1)}\norm{\partial_{v_1}\widehat{ H_{\delta_j}^{k-1}f}}_{H_v^{-s}}^2\leq  \eps  \norm{t^{\delta_1}\partial_{v_1}\widehat{ H_{\delta_j}^{k-1}f}}_{H_v^{s}}^2+ C	 \eps^{\frac{s-1}{s}}k^{\frac{1}{s}}    \normm{\widehat{ H_{\delta_j}^{k-1}f}}^2.
\end{equation*}
Consequently, we combine the above
  estimate with \eqref{interpo} to obtain that, for any $\eps>0$ and   any $t\in ]0,T],$
\begin{multline}\label{coet}
k  t^{\delta_j-1}\big|\big(\partial_{v_1} \widehat{H_{\delta_j}^{k-1}f},\ \widehat{H_{\delta_j}^{k}f}\big)_{L^2_v}\big| \\
  \leq \eps  \normm{\widehat{H_{\delta_j}^{k}f}}^2+  \eps  \norm{t^{\delta_1}\partial_{v_1}\widehat{ H_{\delta_j}^{k-1}f}}_{H_v^{s}}^2+C\eps^{\frac{s-1}{s}}k^{\frac{1}{s}}    \normm{\widehat{ H_{\delta_j}^{k-1}f}}^2.
\end{multline}
As for the second term on the right-hand side of \eqref{coet}, we first use the second equation in \eqref{generate} and then the fact that
\begin{align*}
\forall\ (m,\eta) \in\mathbb Z^3\times\mathbb R^3,\quad 	|a(m,\eta)b(m,\eta)^{k-1}|^2\leq | a(m,\eta)|^{2k }+| b(m,\eta) |^{2k},
\end{align*}
   to compute
\begin{align*}	\norm{t^{\delta_1}\partial_{v_1}\widehat{ H_{\delta_j}^{k-1}f}}_{H_v^{s}}^2&=	\norm{\mathcal F_x \big(t^{\delta_1}\partial_{v_1} H_{\delta_j}^{k-1}f\big)}_{H_v^{s}}^2\\
	&\leq C\norm{ \mathcal F_x\big(H_{\delta_1} H_{\delta_j}^{k-1}f\big)}_{H_v^{s}}^2+C\norm{ \mathcal F_x\big(H_{\delta_2} H_{\delta_j}^{k-1}f\big)}_{H_v^{s}}^2\\
	&\leq C\norm{\widehat{ H_{\delta_1}^{k}f}}_{H_v^{s}}^2+C\norm{\widehat{ H_{\delta_2}^{k}f}}_{H_v^{s}}^2\leq C\sum_{1\leq j\leq 2}\normm{\widehat{ H_{\delta_j}^{k}f}}^2,
\end{align*}
the last inequality following from  \eqref{+lowoftri}.  Substituting the above estimate into    \eqref{coet} we conclude that, for any $\eps>0$ and for each $j=1,2,$
\begin{equation*}
	k  t^{\delta_j-1}\big|\big(\partial_{v_1} \widehat{H_{\delta_j}^{k-1}f},\ \widehat{H_{\delta_j}^{k}f}\big)_{L^2_v}\big| \\
	 \leq C \eps \sum_{1\leq j\leq 2}\normm{\widehat{ H_{\delta_j}^{k}f}}^2 +C\eps^{\frac{s-1}{s}}k^{\frac{1}{s}}    \normm{\widehat{ H_{\delta_j}^{k-1}f}}^2,
\end{equation*}
which  with \eqref{hk1+}  yields that, for any $\eps>0,$
\begin{equation*}
		\begin{aligned}
&\frac{1}{2}\frac{d}{dt}\sum_{1\leq j\leq 2}\norm{\widehat{H_{\delta_j}^{k}f}}_{L^2_v}^2+c_0\sum_{1\leq j\leq 2}\normm{ \widehat{H_{\delta_j}^{k}f}}^{2}\\
&\leq \sum_{1\leq j\leq 2} \norm{\widehat{H_{\delta_j}^{k}f}}_{L^2_v}^{2}+C \eps \sum_{1\leq j\leq 2}\normm{\widehat{ H_{\delta_j}^{k}f}}^2 +C\eps^{\frac{s-1}{s}}k^{\frac{1}{s}}  \sum_{1\leq j\leq 2}   \normm{\widehat{ H_{\delta_j}^{k-1}f}}^2
\\&\quad+\sum_{1\leq j\leq 2} \big|\big(\mathcal{F}_x([ H_{\delta_j}^{k},\ \mathcal{L}]f), \widehat{H_{\delta_j}^{k}f}\big)_{L^2_v}\big|+\sum_{1\leq j\leq 2}\big|\big(\mathcal{F}_x(H_{\delta_j}^{k}\Gamma(f,f)), \ \widehat{H_{\delta_j}^{k}f}\big)_{L^2_v}\big|.
\end{aligned}
\end{equation*}
Letting $\eps$   above   be  sufficiently small,  we get that
\begin{equation*}
		\begin{aligned}
&\frac{1}{2}\frac{d}{dt}\sum_{1\leq j\leq 2}\norm{\widehat{H_{\delta_j}^{k}f}}_{L^2_v}^2+\frac{c_0}{2}\sum_{1\leq j\leq 2}\normm{ \widehat{H_{\delta_j}^{k}f}}^{2} \leq \sum_{1\leq j\leq 2} \norm{\widehat{H_{\delta_j}^{k}f}}_{L^2_v}^{2}+C k^{\frac{1}{s}}    \sum_{1\leq j\leq 2} \normm{\widehat{ H_{\delta_j}^{k-1}f}}^2
\\&\qquad\qquad+\sum_{1\leq j\leq 2} \big|\big(\mathcal{F}_x([ H_{\delta_j}^{k},\ \mathcal{L}]f), \widehat{H_{\delta_j}^{k}f}\big)_{L^2_v}\big|+\sum_{1\leq j\leq 2}\big|\big(\mathcal{F}_x(H_{\delta_j}^{k}\Gamma(f,f)), \ \widehat{H_{\delta_j}^{k}f}\big)_{L^2_v}\big|.
\end{aligned}
\end{equation*}
Note that the above estimate is quite similar to  \eqref{hkenergy+}, with the factor $k^2$ therein replaced by $k^{1/s}$ here.   Moreover, observe that
  \begin{multline*}
	 k^{\frac{1}{2s}}\sum_{1\leq j\leq 2}  \int_{\mathbb Z^3} \Big( \int_{0}^{T}\normm{\widehat{H_{\delta_j}^{k-1}f}(t,m)}^2dt\Big)^{1\over2} d\Sigma(m)\\
	\leq  k^{\frac{1}{2s}}\frac{\eps_0L^{k-1}[(k-1)!]^{\frac{1}{2s}}}{k^2}\leq C\frac{\eps_0L^{k-1}(k!)^{\frac{1}{2s}}}{(k+1)^2},
\end{multline*}
which just follows from inductive assumption \eqref{+suppose}. Thus we may repeat the argument after \eqref{hkenergy+} and use the above estimate  instead of \eqref{srt}, to conclude that
\begin{align*}
	&\sum_{1\leq j\leq 2}\bigg[\int_{\mathbb Z^3} \sup\limits_{0<t\leq T}\norm{\widehat{H_{\delta_j}^{k}f}(t,m)}_{L^2_v}d\Sigma(m)+   \int_{\mathbb Z^3}\Big( \int_{0}^{T}\normm{ \widehat{H_{\delta_j}^{k}f}(t,m)}^{2}dt\Big)^{1/2}d\Sigma(m)\bigg]\\
	&\leq \frac{\eps_0L^{k}( k!)^{\frac{1}{2s}}}{(k+1)^2}.
\end{align*}
Then \eqref{+suppose} holds for $\ell=k$, and thus \eqref{+k+++} follows. The proof of
  Theorem \ref{thm:Gevrey} is completed.
\end{proof}

   \begin{proof}
   	[Completing the proof of Theorem \ref{mainresult}: Gevrey   smoothing effect for $  0< s<\frac12$] \   With  the help of \eqref{+k+++},  the Gevrey estimate \eqref{alpha1} for  $0<s<\frac12$   just follows from the same argument as that in Subsection \ref{subsec:ana}. So  we omit it for brevity.
   \end{proof}

\bigskip
\noindent {\bf Acknowledgements.} W.-X. Li   was supported by NSF of China (Nos.11961160716, 12131017, 12221001)  and  the  Natural Science Foundation of Hubei Province (No.
2019CFA007). C.-J. Xu was supported by the NSFC (No.12031006) and the Fundamental
Research Funds for the Central Universities of China.


\begin{thebibliography}{10}

\bibitem{MR1765272}
R.~Alexandre, L.~Desvillettes, C.~Villani, and B.~Wennberg.
\newblock Entropy dissipation and long-range interactions.
\newblock {\em Arch. Ration. Mech. Anal.}, 152(4):327--355, 2000.

\bibitem{MR3950012}
R.~Alexandre, F.~H\'{e}rau, and W.-X. Li.
\newblock Global hypoelliptic and symbolic estimates for the linearized
  {B}oltzmann operator without angular cutoff.
\newblock {\em J. Math. Pures Appl. (9)}, 126:1--71, 2019.

\bibitem{MR2679369}
R.~Alexandre, Y.~Morimoto, S.~Ukai, C.-J. Xu, and T.~Yang.
\newblock Regularizing effect and local existence for the non-cutoff
  {B}oltzmann equation.
\newblock {\em Arch. Ration. Mech. Anal.}, 198(1):39--123, 2010.

\bibitem{MR2847536}
R.~Alexandre, Y.~Morimoto, S.~Ukai, C.-J. Xu, and T.~Yang.
\newblock The {B}oltzmann equation without angular cutoff in the whole space:
  qualitative properties of solutions.
\newblock {\em Arch. Ration. Mech. Anal.}, 202(2):599--661, 2011.

\bibitem{MR2863853}
R.~Alexandre, Y.~Morimoto, S.~Ukai, C.-J. Xu, and T.~Yang.
\newblock The {B}oltzmann equation without angular cutoff in the whole space:
  {I}, {G}lobal existence for soft potential.
\newblock {\em J. Funct. Anal.}, 262(3):915--1010, 2012.

\bibitem{MR2959943}
R.~Alexandre, Y.~Morimoto, S.~Ukai, C.-J. Xu, and T.~Yang.
\newblock Smoothing effect of weak solutions for the spatially homogeneous
  {B}oltzmann equation without angular cutoff.
\newblock {\em Kyoto J. Math.}, 52(3):433--463, 2012.

\bibitem{MR4201411}
R.~Alonso, Y.~Morimoto, W.~Sun, and T.~Yang.
\newblock Non-cutoff {B}oltzmann equation with polynomial decay perturbations.
\newblock {\em Rev. Mat. Iberoam.}, 37(1):189--292, 2021.

\bibitem{MR4526062}
R.~Alonso, Y.~Morimoto, W.~Sun, and T.~Yang.
\newblock De {G}iorgi argument for weighted {$L^2\cap L^\infty$} solutions to
  the non-cutoff {B}oltzmann equation.
\newblock {\em J. Stat. Phys.}, 190(2):Paper No. 38, 98, 2023.

\bibitem{MR3665667}
J.-M. Barbaroux, D.~Hundertmark, T.~Ried, and S.~Vugalter.
\newblock Gevrey smoothing for weak solutions of the fully nonlinear
  homogeneous {B}oltzmann and {K}ac equations without cutoff for {M}axwellian
  molecules.
\newblock {\em Arch. Ration. Mech. Anal.}, 225(2):601--661, 2017.

\bibitem{MR1949176}
F.~Bouchut.
\newblock Hypoelliptic regularity in kinetic equations.
\newblock {\em J. Math. Pures Appl. (9)}, 81(11):1135--1159, 2002.

\bibitem{MR4612704}
H.~Cao, W.-X. Li, and C.-J. Xu.
\newblock Analytic smoothing effect of the spatially inhomogeneous {L}andau
  equations for hard potentials.
\newblock {\em J. Math. Pures Appl. (9)}, 176:138--182, 2023.

\bibitem{MR4375857}
H.~Chen, X.~Hu, W.-X. Li, and J.~Zhan.
\newblock The {G}evrey smoothing effect for the spatially inhomogeneous
  {B}oltzmann equations without cut-off.
\newblock {\em Sci. China Math.}, 65(3):443--470, 2022.

\bibitem{MR2514370}
H.~Chen, W.~Li, and C.~Xu.
\newblock Gevrey regularity for solution of the spatially homogeneous {L}andau
  equation.
\newblock {\em Acta Math. Sci. Ser. B (Engl. Ed.)}, 29(3):673--686, 2009.

\bibitem{MR2467026}
H.~Chen, W.-X. Li, and C.-J. Xu.
\newblock Gevrey hypoellipticity for linear and non-linear {F}okker-{P}lanck
  equations.
\newblock {\em J. Differential Equations}, 246(1):320--339, 2009.

\bibitem{MR2557895}
H.~Chen, W.-X. Li, and C.-J. Xu.
\newblock Analytic smoothness effect of solutions for spatially homogeneous
  {L}andau equation.
\newblock {\em J. Differential Equations}, 248(1):77--94, 2010.

\bibitem{MR2763329}
H.~Chen, W.-X. Li, and C.-J. Xu.
\newblock Gevrey hypoellipticity for a class of kinetic equations.
\newblock {\em Comm. Partial Differential Equations}, 36(4):693--728, 2011.

\bibitem{MR2506070}
Y.~Chen, L.~Desvillettes, and L.~He.
\newblock Smoothing effects for classical solutions of the full {L}andau
  equation.
\newblock {\em Arch. Ration. Mech. Anal.}, 193(1):21--55, 2009.

\bibitem{MR1324404}
L.~Desvillettes.
\newblock About the regularizing properties of the non-cut-off {K}ac equation.
\newblock {\em Comm. Math. Phys.}, 168(2):417--440, 1995.

\bibitem{MR1737547}
L.~Desvillettes and C.~Villani.
\newblock On the spatially homogeneous {L}andau equation for hard potentials.
  {I}. {E}xistence, uniqueness and smoothness.
\newblock {\em Comm. Partial Differential Equations}, 25(1-2):179--259, 2000.

\bibitem{MR2038147}
L.~Desvillettes and B.~Wennberg.
\newblock Smoothness of the solution of the spatially homogeneous {B}oltzmann
  equation without cutoff.
\newblock {\em Comm. Partial Differential Equations}, 29(1-2):133--155, 2004.

\bibitem{MR4356815}
R.~Duan, W.-X. Li, and L.~Liu.
\newblock Gevrey regularity of mild solutions to the non-cutoff {B}oltzmann
  equation.
\newblock {\em Adv. Math.}, 395:Paper No. 108159, 2022.

\bibitem{MR4230064}
R.~Duan, S.~Liu, S.~Sakamoto, and R.~M. Strain.
\newblock Global mild solutions of the {L}andau and non-cutoff {B}oltzmann
  equations.
\newblock {\em Comm. Pure Appl. Math.}, 74(5):932--1020, 2021.

\bibitem{MR1026858}
C.~Foias and R.~Temam.
\newblock Gevrey class regularity for the solutions of the {N}avier-{S}tokes
  equations.
\newblock {\em J. Funct. Anal.}, 87(2):359--369, 1989.

\bibitem{MR3485915}
L.~Glangetas, H.-G. Li, and C.-J. Xu.
\newblock Sharp regularity properties for the non-cutoff spatially homogeneous
  {B}oltzmann equation.
\newblock {\em Kinet. Relat. Models}, 9(2):299--371, 2016.

\bibitem{MR2784329}
P.~T. Gressman and R.~M. Strain.
\newblock Global classical solutions of the {B}oltzmann equation without
  angular cut-off.
\newblock {\em J. Amer. Math. Soc.}, 24(3):771--847, 2011.

\bibitem{MR2807092}
P.~T. Gressman and R.~M. Strain.
\newblock Sharp anisotropic estimates for the {B}oltzmann collision operator
  and its entropy production.
\newblock {\em Adv. Math.}, 227(6):2349--2384, 2011.

\bibitem{MR3102561}
F.~H\'{e}rau and W.-X. Li.
\newblock Global hypoelliptic estimates for {L}andau-type operators with
  external potential.
\newblock {\em Kyoto J. Math.}, 53(3):533--565, 2013.

\bibitem{MR4107942}
F.~H\'{e}rau, D.~Tonon, and I.~Tristani.
\newblock Regularization estimates and {C}auchy theory for inhomogeneous
  {B}oltzmann equation for hard potentials without cut-off.
\newblock {\em Comm. Math. Phys.}, 377(1):697--771, 2020.

\bibitem{MR4033752}
C.~Imbert, C.~Mouhot, and L.~Silvestre.
\newblock Decay estimates for large velocities in the {B}oltzmann equation
  without cutoff.
\newblock {\em J. \'{E}c. polytech. Math.}, 7:143--184, 2020.

\bibitem{MR4049224}
C.~Imbert and L.~Silvestre.
\newblock The weak {H}arnack inequality for the {B}oltzmann equation without
  cut-off.
\newblock {\em J. Eur. Math. Soc. (JEMS)}, 22(2):507--592, 2020.

\bibitem{MR4229202}
C.~Imbert and L.~Silvestre.
\newblock The {S}chauder estimate for kinetic integral equations.
\newblock {\em Anal. PDE}, 14(1):171--204, 2021.

\bibitem{MR4433077}
C.~Imbert and L.~E. Silvestre.
\newblock Global regularity estimates for the {B}oltzmann equation without
  cut-off.
\newblock {\em J. Amer. Math. Soc.}, 35(3):625--703, 2022.

\bibitem{MR3348825}
N.~Lerner, Y.~Morimoto, K.~Pravda-Starov, and C.-J. Xu.
\newblock Gelfand-{S}hilov and {G}evrey smoothing effect for the spatially
  inhomogeneous non-cutoff {K}ac equation.
\newblock {\em J. Funct. Anal.}, 269(2):459--535, 2015.

\bibitem{MR3193940}
W.-X. Li.
\newblock Global hypoelliptic estimates for fractional order kinetic equation.
\newblock {\em Math. Nachr.}, 287(5-6):610--637, 2014.

\bibitem{MR4147430}
W.-X. Li and L.~Liu.
\newblock Gelfand-{S}hilov smoothing effect for the spatially inhomogeneous
  {B}oltzmann equations without cut-off.
\newblock {\em Kinet. Relat. Models}, 13(5):1029--1046, 2020.

\bibitem{MR3456819}
W.-X. Li, P.~Luo, and S.~Tian.
\newblock {$L^2$}-regularity of kinetic equations with external potential.
\newblock {\em J. Differential Equations}, 260(7):5894--5911, 2016.

\bibitem{MR1278244}
P.-L. Lions.
\newblock On {B}oltzmann and {L}andau equations.
\newblock {\em Philos. Trans. Roy. Soc. London Ser. A}, 346(1679):191--204,
  1994.

\bibitem{MR3177625}
Y.~Morimoto, K.~Pravda-Starov, and C.-J. Xu.
\newblock A remark on the ultra-analytic smoothing properties of the spatially
  homogeneous {L}andau equation.
\newblock {\em Kinet. Relat. Models}, 6(4):715--727, 2013.

\bibitem{MR3310275}
Y.~Morimoto, S.~Wang, and T.~Yang.
\newblock A new characterization and global regularity of infinite energy
  solutions to the homogeneous {B}oltzmann equation.
\newblock {\em J. Math. Pures Appl. (9)}, 103(3):809--829, 2015.

\bibitem{MR3572500}
Y.~Morimoto, S.~Wang, and T.~Yang.
\newblock Measure valued solutions to the spatially homogeneous {B}oltzmann
  equation without angular cutoff.
\newblock {\em J. Stat. Phys.}, 165(5):866--906, 2016.

\bibitem{MR2523694}
Y.~Morimoto and C.-J. Xu.
\newblock Ultra-analytic effect of {C}auchy problem for a class of kinetic
  equations.
\newblock {\em J. Differential Equations}, 247(2):596--617, 2009.

\bibitem{MR3325244}
Y.~Morimoto and T.~Yang.
\newblock Smoothing effect of the homogeneous {B}oltzmann equation with measure
  valued initial datum.
\newblock {\em Ann. Inst. H. Poincar\'{e} C Anal. Non Lin\'{e}aire},
  32(2):429--442, 2015.

\bibitem{MR3551261}
L.~Silvestre.
\newblock A new regularization mechanism for the {B}oltzmann equation without
  cut-off.
\newblock {\em Comm. Math. Phys.}, 348(1):69--100, 2016.

\bibitem{MR4431674}
L.~Silvestre and S.~Snelson.
\newblock Solutions to the non-cutoff {B}oltzmann equation uniformly near a
  {M}axwellian.
\newblock {\em Math. Eng.}, 5(2):Paper No. 034, 36, 2023.

\bibitem{MR839310}
S.~Ukai.
\newblock Local solutions in {G}evrey classes to the nonlinear {B}oltzmann
  equation without cutoff.
\newblock {\em Japan J. Appl. Math.}, 1(1):141--156, 1984.

\end{thebibliography}
\end{document}